\renewcommand{\p@subsection}{}  
\newtheorem{theorem}{Theorem}[section]
\newtheorem{prop}[theorem]{Proposition}
\newtheorem{example}[theorem]{Example}
\Crefname{proposition}{Proposition}{Propositions}
\Crefname{prop}{Proposition}{Propositions}
\numberwithin{table}{section}
\numberwithin{figure}{section}
\numberwithin{algorithm}{section}
\numberwithin{equation}{section}
\numberwithin{subsection}{section}
\DeclareMathOperator*{\Argmin}{Argmin}
\DeclareMathOperator*{\trace}{trace}
\DeclareMathOperator*{\kvec}{vec}
\DeclareMathOperator*{\supp}{supp}
\DeclareMathOperator*{\relgapMin}{\text{rel\_gap}_{\text{min}}}
\DeclareMathOperator*{\relgap}{\text{rel\_gap}}
\newcommand{\cI}{\mathcal{I}}
\newcommand{\cT}{\mathcal{T}}
\newcommand{\cN}{\mathcal{N}}
\newcommand{\cR}{\mathcal{R}}
\newcommand{\cC}{\mathcal{C}}
\newcommand{\cU}{\mathcal{U}}
\newcommand{\PreserveBackslash}[1]{\let\temp=\\#1\let\\=\temp}
\newcolumntype{R}[1]{>{\PreserveBackslash\raggedleft}p{#1}}
\begin{document}

\title{Hardware-Compatible Single-Shot Feasible-Space Heuristics for Solving the Quadratic Assignment Problem}

\author{%
	Haesol Im,\textsuperscript{1} 
    Chan-Woo Yang,\textsuperscript{1} 
    Moslem Noori,\textsuperscript{1}
    Dmitrii Dobrynin,\textsuperscript{2,3}
    Elisabetta Valiante,\textsuperscript{1}
    Giacomo Pedretti,\textsuperscript{4}
    Arne Heittmann,\textsuperscript{2}
    Thomas Van Vaerenbergh,\textsuperscript{5}
    Masoud Mohseni,\textsuperscript{4}
    John Paul Strachan,\textsuperscript{2,3}
    Dmitri Strukov,\textsuperscript{6}
    Raymond Beausoleil,\textsuperscript{4}
    and Ignacio Rozada\textsuperscript{1,}
}
\thanks{Corresponding author: \href{mailto:ignacio.rozada@1qbit.com}{ignacio.rozada@1qbit.com}}

\affiliation{
\textsuperscript{1}1QB Information Technologies (1QBit), Vancouver, BC, Canada\\
\textsuperscript{2}Institute for Neuromorphic Compute Nodes (PGI-14), Peter Gr\"unberg Institute, Forschungszentrum J\"ulich GmbH, J\"ulich,  Germany\\
\textsuperscript{3}RWTH Aachen University, Aachen, Germany \\
\textsuperscript{4}HPE Labs, Hewlett Packard Enterprise, Milpitas, CA, USA\\
\textsuperscript{5}HPE Labs, Hewlett Packard Enterprise, Brussels, Belgium\\
\textsuperscript{6}University of California, Santa Barbara, CA, USA 
}

\date{\today}

\begin{abstract}
Research into the development of special-purpose computing architectures designed to solve quadratic unconstrained binary optimization (QUBO) problems has flourished in recent years. It has been demonstrated in the literature that such special-purpose solvers can outperform traditional complementary metal--oxide--semiconductor architectures by orders of magnitude with  respect to timing metrics on synthetic problems. 
However, they face challenges with constrained problems such as the quadratic
assignment problem (QAP), where mapping to binary formulations such as QUBO introduces overhead and limits parallelism. In-memory computing (IMC) devices, such as memristor-based analog Ising machines, offer significant speed-ups and efficiency gains over traditional
CPU-based solvers, particularly for solving combinatorial optimization problems.
In this work, we present a novel  hardware-aware QAP optimization framework designed for IMC hardware. 
By co-designing the local search heuristic with the underlying hardware, we exploit the intrinsic massive parallelism that allows
for computing of full neighbourhoods simultaneously to make update
decisions. We ensure binary solutions remain feasible by selecting local moves
that lead to neighbouring feasible solutions, leveraging feasible-space search
heuristics and the underlying structure of a given problem. 
Our approach is compatible with both digital computers and analog hardware.
We demonstrate its effectiveness in CPU
implementations by comparing it with state-of-the-art heuristics for solving the QAP.
\end{abstract}

\maketitle

\section{Introduction}

The quadratic assignment problem (QAP) was originally introduced in 1957 by Koopmans and Beckmann~\cite{Koopmans57}. In the QAP, the goal is to assign a number of facilities to an equal number of locations such that the cost associated with flows (e.g., energy or goods) between the facilities is minimized. This problem has remained a popular research topic in the literature due to its ubiquity in various domains, such as facility location, scheduling, process communications, warehouse management, and other fields; see, for example, Refs.~\cite{finke1987quadratic, li2016dynamic, carvalho2006microarray, burkard1998quadratic}. 
In its  native formulation, the solution space of the QAP is represented as comprising all possible permutations, where each permutation represents distinct one-to-one assignments of facilities to locations. An instance of the QAP may be denoted as $\mathrm{QAP}(F, D)$, where  $F = [F_{ij}]_{n \times n}$ and $D = [D_{kl}]_{n \times n}$ represent the flow and the distance matrices between $n$ facilities and $n$ locations, respectively. The goal is to find the assignment of facilities to locations such that the total flow cost is minimized. An assignment decision is feasible if and only if it is a permutation. Given a set $\Pi_n$ of all permutations of $n$ facilities, the aim of $\mathrm{QAP}(F, D)$ is to find an optimal permutation $\pi^* \in \Pi_n$ such that
\begin{equation} 
\label{eq:KoopBeckFormulation}
    \pi^* \in  \Argmin_{\substack{\pi \in \Pi_n}} \left\{ \sum_{i=1}^{n} \sum_{j=1}^{n} F_{ij} D_{\pi(i)\pi(j)} \right\} .
\end{equation}

Research on combinatorial optimization has focused primarily on benchmarking heuristics and algorithms running on traditional complementary metal--oxide--semiconductor (CMOS) hardware: mainly CPUs, but increasingly also GPUs, field-programmable gate arrays (FPGA), and even some custom application-specific integrated chip (ASIC) implementations of various algorithms. The performance of CPUs has doubled approximately every two years since the early 1970s, an empirical observation that has become known as Moore's law. This steady exponential increase in available computing power is expected to slow down in the current decade because the density of transistors in integrated circuits cannot grow indefinitely beyond fundamental physical limits at the nanometre scale. Relatedly, Dennard's scaling law states that, as transistors shrink, their power density remains constant, allowing for increased computing efficiency per chip area.  Combined with Moore's law, it has resulted in a doubling in performance per joule every 18 months. However, Dennard's scaling law began to break down in the mid-2010s. For this reason, attention is shifting towards alternative computing paradigms, such as GPUs, FPGAs, and ASICs, as well as to devices that exploit quantum  mechanical effects, and devices that exploit analog effects to perform very energy-efficient computations~\cite{cai2020power, fahimi2021combinatorial}. D-Wave Systems's quantum annealer~\cite{Johnson11}, Fujitsu's Digital Annealer~\cite{Matsubara17,Aramon19}, NTT's coherent Ising machine~\cite{Hamerly18}, Hitachi's FPGA-based Ising computers~\cite{Okuyama16,Yoshimura17,Okuyama19}, LightSolver's light processing units~\cite{meirzada2022lightsolver}, and Toshiba's simulated bifurcation machine~\cite{Goto19} are all examples of new technologies that are expected to benefit the field of computationally intensive discrete optimization problems, and can be generally referred to as in-memory computing (IMC) hardware accelerators. Application-driven analog IMC hardware devices that embed problem data within a crossbar structure are another promising approach~\cite{doi:10.1126/science.adi9405, bhattacharya2024computing}. In-memory computing hardware accelerators are known for their energy-efficient computation~\cite{scaleIMC_TSP_Lu_2023,pedretti2025solvingSAT,zhang2024distributedbinaryoptimizationinmemory} and, when combined with application-specific algorithms, can fully exploit parallelism, leading to significantly improved computation times and reduced computational energy consumption. In-memory computing hardware typically benefits heuristics that can tolerate limited precision, but has limitations, for example, implementing algorithms with complex decision heuristics, such as backtracking.

The majority of the novel optimization hardware solutions that have been developed in recent years aim to solve instances of quadratic unconstrained binary optimization (QUBO) problems, or using the equivalent Ising formulation. Such devices are promoted as fast general-purpose  optimization solvers because a wide variety of hard problems can be mapped to QUBO form in polynomial time~\cite{10.3389/fphy.2014.00005}. The literature that has showcased the progress of these  new devices has not, however, demonstrated their efficiency in handling various practical industrial optimization problems. New computational devices are still maturing and the technological limitations, such as limited numbers of bits, qubits, connectivity, low precision, and noise will likely be addressed in the future. Mapping real-world problems into QUBO form presents other challenges that cannot be fixed by hardware advances. The reformulation into QUBO, particularly for constrained problems, often results in significantly harder problems by creating a much larger, more rugged solution landscape that includes both feasible and infeasible solutions~\cite{Dobrynin:2024iyi}. These infeasible solutions are typically cast as a form of penalty in the objective function~\cite{Glover2022}, which results in an objective function with many local optima. Searching in a mixed landscape of feasible and infeasible solutions has the key disadvantage that intermediary infeasible solutions must be traversed in order to move from one feasible solution to another, and even sophisticated algorithms struggle to find good solutions~\cite{MOHSENI2021}. Furthermore, these devices face challenges with constrained problems like the QAP, where mapping to binary formulations such as QUBO introduces overhead and limits parallelism. Recent work has shown that state-of-the-art heuristics for solving the QAP can be mapped to FPGA and custom ASIC architectures, making use of massive parallelization to evaluate all possible local updates and guide a stochastic local search heuristic~\cite{MAQOBagherbeik_2022,scaleIMC_TSP_Lu_2023}.

In this paper, we present a hardware-aware co-design framework, tailored for IMC hardware, that explores the feasible space in the binary formulation of the QAP, and performs single-shot evaluations of the full-neighbourhood solution. Our approach enables massive parallelism by enabling a one-step, single-shot evaluation of the full neighbourhood or local gradient, enabling optimal local decisions, which can be made efficiently on a single computing core. When parallel search is discussed in the context of QAP heuristics, it is typically associated with leveraging multiple parallel processors to run independent heuristic searches~\cite{10.1007/978-3-319-39636-1_4}. In our approach, the full gradient is evaluated in parallel within a single iteration of the algorithm; see, for example, Ref.~\cite{pedretti2025solvingSAT}. Binary solutions remain feasible by selecting local moves that lead to neighbouring feasible solutions, enabling the use of feasible-space search heuristics.  As a result, our approach achieves an exponential reduction in search space compared with QUBO implementations, along with a much simpler energy landscape. Additionally, our method is compatible with both digital computers and IMC hardware accelerators. To evaluate the performance of our co-designed algorithm, we benchmark its effectiveness, comparing it to CPU-based implementations of well-known heuristics for solving the QAP.

\subsection{Applications of the QAP}

The QAP model serves as a robust combinatorial optimization model capable of resolving complex logistical and design challenges that naturally involve the concepts of flow and distance, providing decision support on resource allocation, scheduling, and infrastructure design. One of the more well-known applications of the QAP is in logistics and manufacturing, primarily targeting the minimization of transportation and handling costs. This includes facility layout design, where elements such as work centres and departments are arranged within a facility to minimize the total travel distance between them~\cite{Dickey_1972Cbau, FU_KAKU_1997Mwam, BenjaafarSaifallah2002MaAo, CHIANG1998457}. Insights derived from solving the QAP typically suggest that work centres with high flow can be placed close to each other. Similarly, in warehouse optimization problems, such as storage location assignment, the QAP is applied to assign products to specific storage locations in order to minimize the total handling costs and maximize the utilization of storage space~\cite{Silva2022Optimization, HausmanWarrenH1976OSAi}.

The QAP also has applications in the hardware sector for chip and circuit board design, which focuses on minimizing communication overhead and wiring length. For instance,  when designing printed circuit boards or computer backboards, the QAP can be used to minimize the cumulative length of interconnecting wires, reducing manufacturing costs~\cite{53bbb073-5adc-3787-97ba-b0c9dae99c60, EMANUEL2012525, Krarup1978}. A contemporary application is in the physical design of quantum processing units, where the QAP framework is utilized to efficiently determine the optimal placement and connectivity of computational qubits~\cite{dury2020quboformulationqubitallocation, chiew2025optimalfermionqubitmappingsquadratic}.

Another real-world application of the QAP is to assign entities---such as employees, departments, and physical units---to locations arising in service industries. For instance, optimizing the placement of medical departments to specific areas in a hospital~\cite{CUBUKCUOGLU2021102952} or minimizing the total cost incurred by  travel distance and flow of patients~\cite{Elshafei_1977_hospital} can be desired. The QAP can be leveraged to suggest zone placements that mitigate the risk of cross-infections by minimizing unnecessary mixing of patients~\cite{Seva2025Bed}. In transportation operations, the QAP model is applied to assign connecting flights to airport gates, minimizing travel distance of passengers  or luggage transport~\cite{Bouras2014}.

Beyond these industrial applications, the mathematical structure of the QAP extends to various combinatorial problems~\cite{10.5120/16825-6584, Cela98, Commander2005QAPThesis, 0b2b06e1-2775-327a-843f-af24a0afab1a,LOIOLA2007657,heffley1977QAPrunner}. For example, the QAP can be used to find stable molecular conformations by assigning molecular fragments to structural positions~\cite{Phillips1994}. The QAP also exhibits structural similarities with numerous other classical discrete optimization problems, such as the travelling salesperson problem, the graph partitioning problem, and the maximum clique problem.

\subsection{Notation}
\label{sec:notation}

Throughout this paper, we use the following notation. 
We let $\mathbb{R}^{n}$ and $\mathbb{R}^{m\times n}$ be  sets of real-valued vectors of length $n$ and $m \times n$ matrices, respectively.
The set of $n \times n$ symmetric matrices is denoted by $\mathbb{S}^n$. 
Given a matrix $X\in \mathbb{R}^{m\times n}$,
let $\kvec(X)\in \mathbb{R}^{m  n}$ denote the vector formed by sequentially stacking columns of $X$.
The $n \times n$ identity matrix is denoted by $I_n$. We use $e_i$ to  denote the standard unit vector in an arbitrary dimension, omitting the dimension specification when it is clear from the context. 
For the all-ones vector of length $n$, the notation $\mathbf{1}_n$ is used.
Given a vector $x\in \mathbb{R}^n$, $x_i$ represents the $i$-th component of the vector $x$.
Given $A\in \mathbb{R}^{m\times n}$,  
$A(i,j)$ and $A_{i,j}$ refer to the $(i,j)$-th element of $A$ and $A^T \in \mathbb{R}^{n\times m}$ refers to the transpose of $A$.
The $i$-th row and $i$-th column of $A$ are represented by $A(i,:)$ and $A(:,i)$, respectively.
Given a vector $x\in \mathbb{R}^n$, $\supp(x)$ denotes the support of $x$, that is, $\supp(x)=\{ i: x_i \ne 0\} \subseteq \{0,1,\ldots, n-1\}$.
The notation $\langle \cdot , \cdot \rangle$ 
is used to denote the standard inner product and
 $\otimes$ is used to represent the Kronecker product.
The symbol $\circ$ is used for the Hadamard product (i.e., the element-wise product) between two vectors or matrices. 
For the set of permutations of length $n$, and the set of $n \times n$ permutation matrices, $\Pi_n$ and $P_n$ are used, respectively.
We summarize the notation we have defined and provide examples of its use in  \Cref{sec:SymbolExample}.

\subsection{Contributions}

The contributions of this paper are as follows. 
\begin{enumerate}
\item 
We provide an efficient construction of the full neighbourhood at a given feasible binary solution. 
\item 
We design a parallel evaluation framework that computes objective differences across all neighbouring solutions in one iteration.
\item 
We introduce a hardware-aware co-designed framework tailored to the architectural requirements of application-driven IMC hardware.
\end{enumerate}

\subsection{Outline}

\Cref{sec:background} provides a background on the QAP and a discussion on the motivation for using the feasible-space method.
In \Cref{sec:localUpdate}, we show formulas that leverage problems' underlying structure to optimize performance. 
\Cref{sec:FNSearch} introduces the novel full-neighbourhood search that explores an entire neighbourhood in parallel, potentially benefiting from IMC hardware. 
The benchmarking heuristics that we developed and tested, along with our experimental results, are outlined in \Cref{sec:Numerics}. We summarize our work in \Cref{sec:conclusion}.

\section{Background}
\label{sec:background}

In this section, we review existing approaches for solving the QAP and explain the rationale behind pursuing the feasible-space method. 
Being an NP-hard problem, the QAP is commonly tackled using local-search heuristic algorithms, such as those presented in Refs.~\cite{DreznerZvi2003ANGA,BashiriMahdi2012Eham,AntColoniesQAP99}. They are CPU-based solvers that offer fast runtimes,  and have been shown to produce near-optimal solutions. Recently, hardware accelerators have been proposed as a means to solve challenging combinatorial optimization problems; see, for example, Ref.~\cite{Dobrynin:2024iyi}. 
Problems are often formulated in  QUBO form in order to use these devices.
Some examples of problems that have been solved by such devices using a QUBO formulation include the max-cut problem, graph colouring problem~\cite{10.3389/fphy.2014.00005}, $k$-satisfiability problem~\cite{Sat2QUBO16, bhattacharya2024computing}, and the travelling salesperson problem~\cite{Warren2020TST_QUBO}. 
It also possible to map the QAP to a binary formulation, which has been used to solve it with QUBO-based IMC hardware accelerators. 
In this section, we provide an overview of the binary formulation of the QAP, and discuss the motivation for designing the feasible-space method within the binary space.

\subsection{Binary Formulation of the QAP}

To solve the QAP using an Ising hardware solver to solve a QUBO problem, it needs to be cast into a binary formulation. Let $x_{ik}$ be a binary decision variable that is equal to $1$ if facility $i$ is assigned to location $k$, and $0$ otherwise. The QUBO formulation of the problem \eqref{eq:KoopBeckFormulation} can be written using the following objective function $E$:
\begin{equation}
\label{qubo_formulation}
\min_{x \in\{0,1\}^{n^2}}  
\sum_{i=1}^{n} \sum_{k=1}^{n} \sum_{j=1}^{n} \sum_{l=1}^{n} 
D_{ij} F_{kl} x_{ik} x_{jl} + \lambda\left[\sum_{k=1}^{n}\left(\sum_{i=1}^{n} x_{ik}-1\right)^2 + \sum_{i=1}^{n}\left(\sum_{k=1}^{n} x_{ik}-1\right)^2\right] .
\end{equation}
A penalty multiplier $\lambda$ is needed in order to ensure that only one facility is assigned to each location and vice versa. The parameter $\lambda$ should be chosen sufficiently large such that for every infeasible assignment $x_{\mathrm{inf}}$, there is at least one feasible assignment $x_{\mathrm{f}}$, where $E(x_{\mathrm{inf}}) > E(x_{\mathrm{f}})$. This yields $E(x_{\mathrm{inf}}) > \lambda$ because the flow and the distance matrices are non-negative and the assignment feasibility violation value is at least two. Further, let $u$ be an upper bound such that $E(x_{\mathrm{f}}) < u$ for every feasible solution (i.e., permutation) $x_{\mathrm{f}}$. Setting $\lambda > u$ would ensure every non-permutation solution evaluates to a value greater than the upper bound $u$, and, as a result, the objective values of all permutation solutions are smaller than all non-permutation solutions' objective values. The parameter $u$ is determined by sending the largest outgoing and incoming flows of each facility over the longest distances. This results in a loose bound on $\lambda$, and it might be possible to find a tighter bound. Thus, the penalty multiplier can be set to \(\lambda = \alpha u\), and the parameter \(\alpha \in [0,1]\) is tuned for optimal performance of the algorithm. Of note, with a tight bound on $\lambda$, all feasible solutions will be local optima in the extended binary solution space that also considers infeasible solutions.

\subsubsection{Solution Space}

There are multiple issues that contribute to making the binary formulation of the QAP much more challenging to solve than the native formulation \eqref{eq:KoopBeckFormulation}, at least when using a standard single bit-flip QUBO solver.
One of the main issues with the binary formulation is the exponential increase of the solution space. In the native QAP formulation, the solution space is given by the set of all permutations, \(\Pi_n\). For a given value of \(n\), there are \(n!\) possible solutions, whereas in the binary formulation, the solutions are expressed as binary vectors of size \(n^2\); therefore, the total solution space has a size of \(2^{n^2}\). \Cref{tab:qubo_qap_solution_space} shows a comparison of the size of the solution space between the native QAP and the binary formulations for several problem sizes.

\begin{table}[ht]
    \centering
    \begin{tabular} {llll}
\toprule
\multicolumn{1}{l}{Number of Facilities \ \ \ } & 
        \multicolumn{1}{l}{Number of} & 
        \multicolumn{1}{l}{Solution Space:} \ \ \ &
        \multicolumn{1}{l}{Solution Space:} \\
        \multicolumn{1}{l}{and Locations ($n$)} & 
        \multicolumn{1}{l}{Binary Variables ($n^2$) \ \  \ } & 
        \multicolumn{1}{l}{Native ($n!$)} & 
        \multicolumn{1}{l}{Binary ($2^{n^2}$)} \\
\midrule
         20  & 400   & \(\sim10^{18}\)   & \(\sim10^{120}\)   \\ 	
         40  & 1600  & \(\sim10^{48}\)  & \(\sim10^{482}\)  \\ 
         60  & 3600  & \(\sim10^{82}\)  & \(\sim10^{1084}\)  \\ 
         80  & 6400  & \(\sim10^{119}\)  & \(\sim10^{1927}\)  \\ 
         100 & 10,000 & \(\sim10^{158}\) & \(\sim 10^{3010}\) \\ \bottomrule  
    \end{tabular}
    \caption{Number of variables and size of the solution space in the native and binary formulations for various problem sizes.}
    \label{tab:qubo_qap_solution_space}
\end{table}

The exponentially larger solution space of the QUBO formulation comprises mostly infeasible solutions, for example, solutions where a single location has two separate facilities assigned to it. In standard QUBO local-search heuristics, solutions are updated through single bit-flips, and since there are $n^2$ binary variables, there are exactly $n^2$ configurations accessible from any given solution via single bit-flips. Given a feasible starting configuration, none of the $n^2$ single bit-flip moves lead to another feasible configuration; in fact, they result in a solution penalized by the term $\lambda \left[\sum_{k=1}^{n}\left(\sum_{i=1}^n x_{ik} - 1\right)^2 +\sum_{i=1}^{n}\left(\sum_{k=1}^n x_{ik} - 1\right)^2 \right]$  in the QUBO objective function \eqref{qubo_formulation}. It can easily be shown that to move via single bit-flips between feasible solutions in the binary space, it is necessary to visit at least three infeasible solutions before reaching another feasible solution. A feasible binary solution can be represented as an $n \times n$ permutation matrix by rearranging the $n^2$ solution vector, and a local move to another feasible solution corresponds to swapping two columns of the permutation matrix. The minimal way to do this is via four individual bit flips.

In the native space of QAP problems of size $n=3$, the basic move involves simply swapping two elements of a solution as follows:
\begin{equation*}
    \begin{pmatrix}
    {\color{red} 3} \\ 
    {\color{orange} 1} \\ 
    {\color{blue} 2} 
  \end{pmatrix}
    \quad \rightarrow \quad
    \begin{pmatrix}
    {\color{orange} 1} \\  
    {\color{red} 3} \\ 
    {\color{blue} 2} 
  \end{pmatrix}
\end{equation*}
In single bit-flip QUBO heuristics, the equivalent move between two feasible QAP solutions must traverse at least three infeasible solutions (infeasible solutions are shown in grey and black):
\begin{equation*}
    \begin{pmatrix}
    {\color{red}0} \\ {\color{red}0} \\ {\color{red}1} \\ {\color{orange}1} \\ {\color{orange}0} \\ {\color{orange}0} \\ {\color{blue}0} \\ {\color{blue}1} \\ {\color{blue}0} 
  \end{pmatrix}
 \quad \rightarrow \quad
\begin{pmatrix}
    {\color{black}1} \\ {\color{lightgray}0} \\ {\color{lightgray}1} \\ {\color{lightgray}1} \\ {\color{lightgray}0} \\ {\color{lightgray}0} \\ {\color{lightgray}0} \\ {\color{lightgray}1} \\ {\color{lightgray}0} 
  \end{pmatrix}
 \quad \rightarrow \quad
\begin{pmatrix}
    {\color{black}1} \\ {\color{lightgray}0} \\ {\color{lightgray}1} \\ {\color{lightgray}1} \\ {\color{lightgray}0} \\ {\color{black}1} \\ {\color{lightgray}0} \\ {\color{lightgray}1} \\ {\color{lightgray}0} 
  \end{pmatrix}
 \quad \rightarrow \quad
\begin{pmatrix}
    {\color{black}1} \\ {\color{lightgray}0} \\ {\color{lightgray}1} \\ {\color{black}0} \\ {\color{lightgray}0} \\ {\color{black}1} \\ {\color{lightgray}0} \\ {\color{lightgray}1} \\ {\color{lightgray}0} 
  \end{pmatrix}
 \quad \rightarrow \quad
\begin{pmatrix} {\color{orange}1} \\ {\color{orange}0} \\ {\color{orange}0} \\ 
    {\color{red}0} \\ {\color{red}0} \\ {\color{red}1} \\{\color{blue}0} \\ {\color{blue}1} \\ {\color{blue}0} 
  \end{pmatrix}
\end{equation*}
In practice, however, it is often the case that  significantly more than three infeasible configurations are required before a new feasible solution is found, as now all feasible solutions exist as local optima, surrounded by high-energy (i.e., penalized by $\lambda$) infeasible solutions. \Cref{QAPResiduals} illustrates this phenomenon. In \Cref{fig:gap_QUBO_tabu}, the tabu-based QUBO heuristic is able to see only fewer than 30 feasible solutions, 
indicated by the red markers, over 1000 iterations. Annealing-based QUBO solvers often have even worse performance, particularly as the temperature decreases and they are forced to accept only moves that result in an improvement.
\begin{figure}
    \centering
    \includegraphics[width=0.75\textwidth]{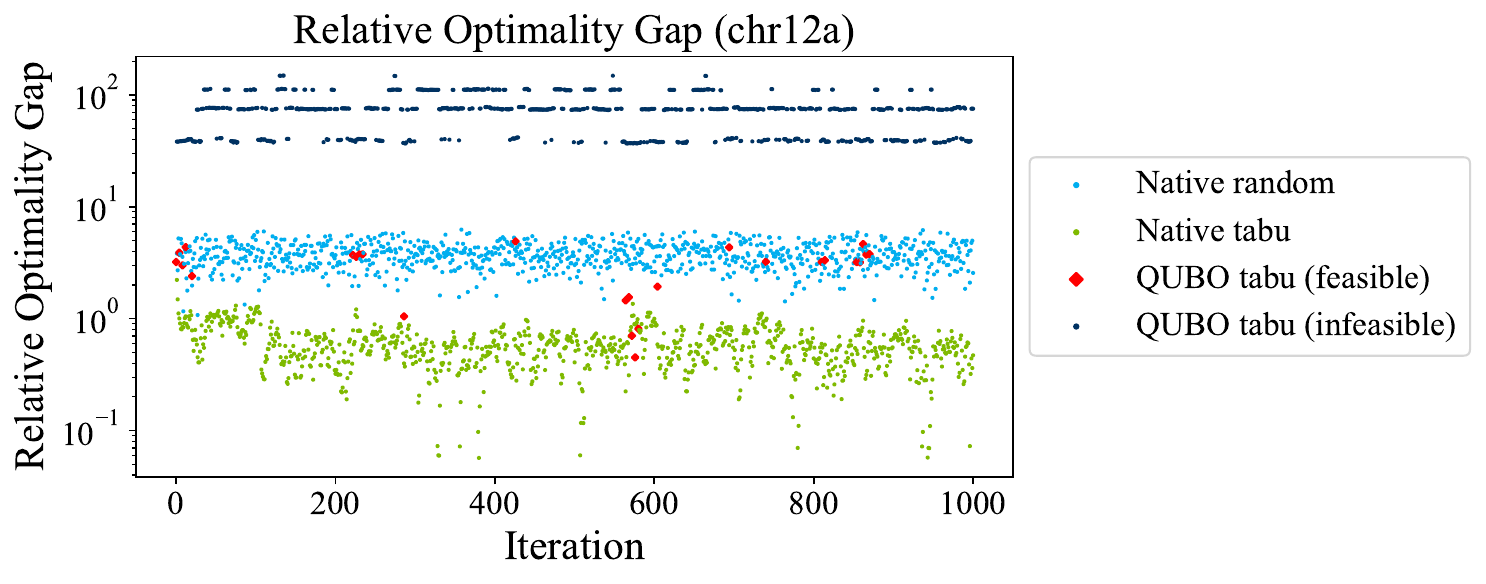}
    \caption{Relative optimality gaps between objective values observed by three heuristic solvers and optimal value for the instance ``chr12a'' of QAPLIB, each having 12 facilities and locations (144 binary variables in total). The search heuristics used are native-space tabu search, a random feasible-solution generator, and a QUBO-space tabu search.}
    \label{fig:gap_QUBO_tabu}
\end{figure}

\begin{figure}[ht]
    \centering
    \includegraphics[width=.7\textwidth]{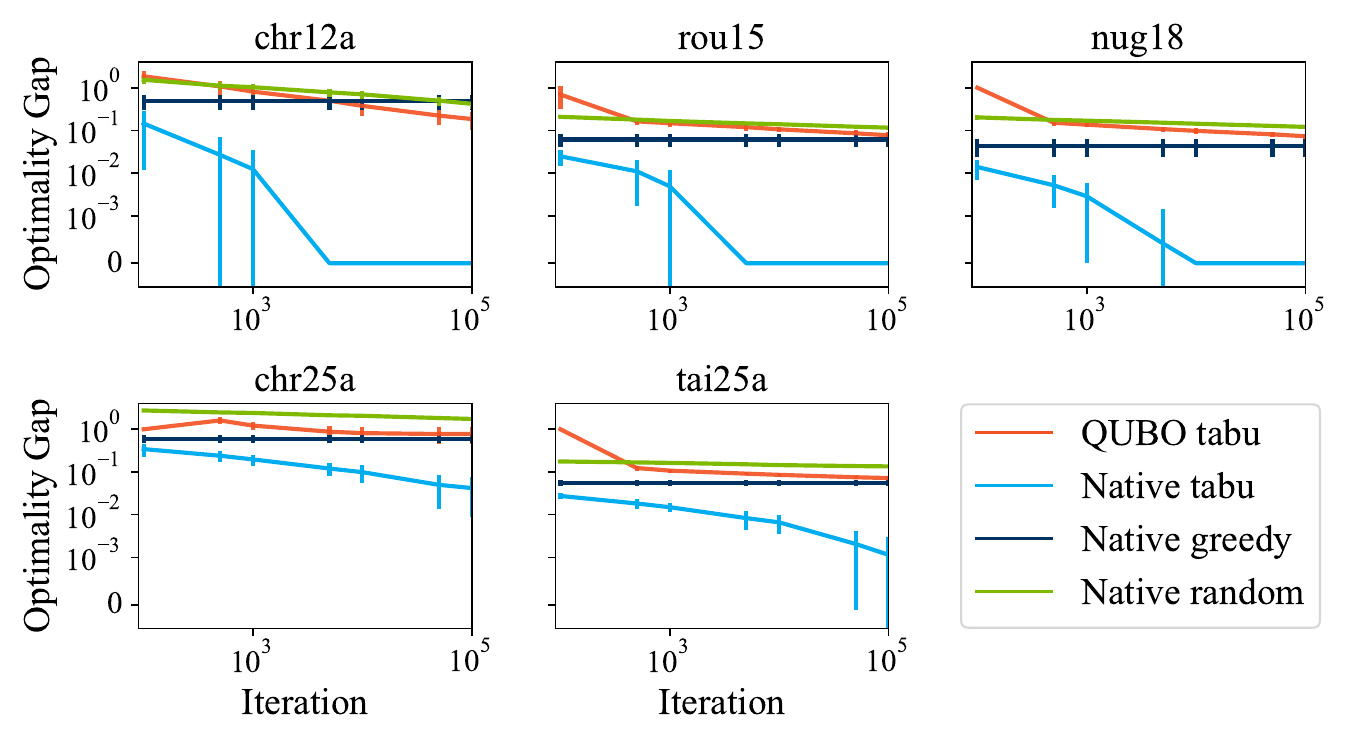}
    \caption{Relative optimality gap as a function of the number of iterations for the QAPLIB \cite{QAPLIBurl} instances between 12 and 25 facilities and locations (or between 144 and 625 binary variables). Four heuristics, each repeated $30$  times, are tested: a tabu implementation on the QUBO formulation, a tabu implementation on the native formulation, a greedy formulation on the native formulation, and a baseline of generated random feasible solutions.}
    \label{QAPResiduals}
\end{figure}

Solvers attempting to navigate this landscape tend to spend most of the time on infeasible solutions, and are thus able to find only a few feasible solutions. Any notion of performing gradient descent on the feasible space is lost as there is no direct connection between feasible solutions. \Cref{QAPResiduals} shows that, in comparing optimized solvers in the QUBO space to the generation of random feasible solutions (which is trivial, as feasible solutions are simply permutations), the QUBO heuristics do not improve on the random heuristic.
The native-space tabu search heuristic, on the other hand, drastically outperforms the tabu heuristic applied to the QUBO formulation and is often able to find the optimal solution.

\section{An Improved Local Search Heuristic for Hardware}
\label{sec:localUpdate}

The QAP problem QAP$(F,D)$ can be formulated as a series of matrix multiplications by representing solutions as permutation matrices. 
Recall that $P_n$ is the set of $n \times n$ permutation matrices. 
Then, the objective function representing an optimal solution $\pi^*$ in the problem \eqref{eq:KoopBeckFormulation} is equivalent to the formulation
\begin{equation}
\label{eq:traceFormulation}
    X^* \in   \Argmin_{X\in P_n} \left\{ \trace(  DXFX^T ) \right\} .
\end{equation}
It is well-known that, for any $n \times n$ matrices $D,F$, and $X$, the following relationship holds: 
\begin{equation}
    \label{eq:trace_to_qaud}
\begin{array}{rcl}
\trace (FXD^TX^T) &=&  \kvec(F^TXD)^T \kvec(X) \\ 
   & = & \left[ (D^T \otimes F^T)\kvec(X) \right]^T \kvec(X) \\
   &= & \kvec(X)^T (D\otimes F)  \kvec(X) .
\end{array}
\end{equation}
Thus, the formulation \eqref{eq:traceFormulation} is equivalent to 
\begin{equation}
    \label{eq:QUBOFeasFormulation}
        X^* \in  \Argmin_{X\in P_n} \left\{  \kvec(X)^T (D\otimes F)  \kvec(X)  \right\} ,
\end{equation}
and this expression represents the non-penalized part of the binary formulation in the QUBO objective function~\eqref{qubo_formulation}.

In this section, we propose a local update mechanism that adheres to the formulation~\eqref{eq:QUBOFeasFormulation}. 
We emphasize that working within the constraints 
\[
\left\{\kvec(X) \in \{0,1\}^{n^2} : X\in P_n \right\}
\]
removes the need to employ the penalty terms in the QUBO formulation \eqref{qubo_formulation}; from this point forward, we refer to this formulation as the ``binary formulation''.
For the balance of this paper, we assume that $D$ and $F$ each have the zero-diagonal property ($D_{i,i}=0, F_{i,i}=0,$ for each $i$), as is observed in the vast majority of QAP instances.

\subsection{Quadruple Bit-Flip}
\label{sec:QaudFlipImproved}

The main differences in solving a QAP problem in the native space compared to solving it in the QUBO space are that the QUBO solution space is much larger and that it is composed of mostly infeasible solutions. In addition, the basic move on a single bit-flip QUBO solver would force the solver to spend most of its time in infeasible space and impede any gradient decisions based only on feasible solutions. 
It is possible to avoid the infeasible solutions and regain the performance of a native QAP solver by defining each move as a quadruple bit-flip move within the constrained feasible binary space. 
This specialized move is equivalent to swapping two elements in a permutation (2-opt)~\cite{Taillard91}, and to swapping two columns of a permutation matrix (4-opt)~\cite{WANG2023109220}.
The key idea is that flipping a single zero variable in a feasible solution uniquely determines the other three variables that will need to be flipped in order to guarantee another feasible solution.

We illustrate how a quadruple bit-flip is determined with an initially feasible solution with $n=4$ native variables (i.e., 16 variables in the binary space). Suppose that the variable at index seven is flipped from zero to one:
\[     x= 
    \begin{pmatrix}
    1 \\ 0 \\ 0 \\ 0 \\
    0 \\  0 \\  {\color{red}1} \\  1 \\ 
    0 \\  0 \\  1 \\  0 \\
    0 \\  1 \\  0 \\  0 \\ 
    \end{pmatrix}
    \quad \rightarrow \quad
    \begin{bmatrix}
    1 & 0 & 0 & 0 \\
    0 & 0 & 0 & 1\\
    0 & {\color{red}1} & 1 & 0\\
    0 & 1 & 0 & 0\\
    \end{bmatrix} .
\]
Visualizing the binary solution as a matrix immediately exposes the violations of the one-hot encoding constraints (i.e., rows and columns must contain exactly one ``1''), and pinpoints the bit-flips that are required to arrive at a new feasible solution:
\begin{equation*}
     x= 
    \begin{pmatrix}
    1 \\ 0 \\ 0 \\ 0 \\
    0 \\  0 \\  {\color{red}1} \\  {\color{red}0} \\ 
    0 \\  0 \\  {\color{red}0} \\  {\color{red}1} \\
    0 \\  1 \\  0 \\  0 \\ 
    \end{pmatrix}
    \quad \rightarrow \quad
    \begin{bmatrix}
    1 & 0 & 0 & 0 \\
    0 & 0 & 0 & 1 \\
    0 & {\color{red}1} & {\color{red}0} & 0\\
    0 & {\color{red}0} & {\color{red}1} & 0\\
    \end{bmatrix} .
\end{equation*}
For an arbitrary $n$, there are $n^2-n = n(n-1)$ variables assigned a value of ``0'' from which to choose, and a permutation solution in native space has $\binom{n}{2} = n(n-1)/2$ possible local moves. The 2$\times$ disparity comes from the fact that there are two possible choices of ``0'' elements in the binary solution that result in the identical new feasible solution.
Transitioning from one feasible solution to another involves selecting a pair of variables assigned a value of ``1''.  This choice then uniquely determines the two other variables assigned a value of ``0'' that need to be flipped. 
We can systematically perform these operations directly within the feasible binary space. 
Given two indices $z_1, z_2 \in  \{0,1,\ldots, n^2-1\}$ to flip from ``1'' to ``0'', we identify indices $z_3, z_4\in \{0,1,\ldots, n^2-1\}$ to flip from ``0'' to ``1'' following the rule summarized in \Cref{algo:quadBitflip}.
\begin{algorithm}[H]
\caption{~~Quadruple Bit-Flips in Feasible Binary Space}
\label{algo:quadBitflip}
\begin{algorithmic}[1]
\Function{Identify\_Bit\_to\_Flip}{$z_1, z_2$}
    \State $z_3 = \left\lfloor {z_2}/{n} \right\rfloor \cdot n + \left( z_1 \bmod  n \right) $
    \State $z_4 =  \left\lfloor {z_1}/{n} \right\rfloor \cdot n + \left( z_2 \bmod  n \right)$
    \State \Return $z_3,z_4$
\EndFunction
\end{algorithmic}
\end{algorithm}
Let $x$ be a feasible binary solution and let $z_3,z_4=$ \texttt{{Identify\_Bit\_to\_Flip}}($z_1,z_2)$ for two distinct indices \mbox{$z_1,z_2 \in \supp(x)$.}
Then, a new feasible solution in the binary space is obtained by the following assignment:
\[
x(z_1) \leftarrow 0, \ 
x(z_2) \leftarrow 0, \
x(z_3) \leftarrow 1, \
x(z_4) \leftarrow 1.
\]
There are $n$ elements assigned a value of ``1'' in each feasible binary solution $x$ that correspond to $\supp(x)$. Therefore, the local neighbourhood of possible feasible moves comprises $\binom{n}{2}$ quadruple bit-flips.

\subsection{Feasible-Space Updates to the Binary Formulation}

In this section, we derive a formula to compute the difference in the objective values between two permutation assignments, expressed through a sequence of inner products. The formula facilitates natural translations to IMC hardware.
We use the quadruple bit-flips outlined in \Cref{sec:QaudFlipImproved} and vectorized operations. 
Starting from a feasible solution $x\in \{0,1\}^{n^2}$,  let $y\in \{0,1\}^{n^2}$ be a feasible solution derived from $x$ via a quadruple bit-flip. 
For simplicity, let $Q := D\otimes F$ in the formulation \eqref{eq:QUBOFeasFormulation}.
The goal is to show how to compute the gradient, that is, the difference in the objective value for a local update, 
\begin{equation}
\label{eq:orig_diff_energy}
y^TQ y - x^TQ x,
\end{equation}
using simple matrix--vector products. 

Let $z_1$ and $z_2$ be the indices where ``$1$'' in $x$ is flipped to ``$0$'', while $z_3$ and $z_4$ are the indices where ``$0$'' in $x$ is flipped to ``$1$'', following the procedure outlined in \Cref{algo:quadBitflip}.
Since $y$ is the result of the quadruple bit-flip originating from $x$, 
$\supp(x) \cap  \supp(y)^c = \{z_1,z_2\}$
and $  \supp(y) \cap \supp(x)^c = \{z_3,z_4\}$.
We define the binary vectors $\phi_x ,\phi_y\in \{0,1\}^{n^2}$, where
\begin{equation}
    \label{eq:diffphi_xy}
(\phi_x)_i = 
\begin{cases}
1  & \text{if }  i \in \{z_1,z_2\}, \\
0 & \text{otherwise,}
\end{cases}
\quad \text{ and } \quad
(\phi_y)_i = 
\begin{cases}
1  & \text{if }  i \in  \{z_3,z_4\}, \\
0 & \text{otherwise.}
\end{cases}
\end{equation}
In other words, 
$\phi_x =e_{z_1}+e_{z_2}$
and 
$\phi_y=e_{z_3}+e_{z_4}$, where $e_i$ denotes the $i$-th standard unit vector.
The support of the vectors $\phi_x$ and $\phi_y$ indicates the positions in the incumbent variable assignment where bit flips have occurred.  Thus, the two solutions $x$ and $y$ have the following relationship:
\[
y = x-e_{z_1}-e_{z_2}+e_{z_3}+e_{z_4} = x- \phi_x + \phi_y.
\]
Then, the difference \eqref{eq:orig_diff_energy} can be computed explicitly as 
\begin{equation}
\label{eq:naive_diff_computation}
\begin{array}{ccl}
y^TQy - x^TQx 
&=& \left[ Q(z_3, : )  + Q(z_4, : )  \right] y
- \left[  Q(z_1, : ) + Q(z_2 , : )  \right] x \\
&& +\left[ Q(:,z_3 )  + Q(:,z_4 ) \right]^T y
- \left[  Q(:,z_1 ) + Q(:,z_2 )  \right]^Tx  \\
&& +~Q(z_1,:)\phi_x + Q(z_2,:) \phi_x - Q(z_3,:)\phi_y -Q(z_4,:)\phi_y .
\end{array}
\end{equation}
The derivation of the equality \eqref{eq:naive_diff_computation} is included in Proposition \ref{prop:nativeDiff}.

The remainder of this section focuses on reducing the number of algebraic operations outlined in the equality~\eqref{eq:naive_diff_computation}.
We employ symmetrization as a strategy to achieve efficient local updates 
and  present an improved local update 
that leverages the structure of semi-symmetric instances.

\subsubsection{Feasible-Space Updates via Symmetrization}
\label{sec:symmetrization}

For any $w\in \mathbb{R}^n$ and $A\in \mathbb{R}^{n\times n}$,
recall the identity 
\[ 
w^TAw   =  w^T \left( \frac{1}{2} A + \frac{1}{2}A^T\right)w.
\]
Hence, we may symmetrize $Q$ and assume the symmetry of $Q$ without loss of generality.
Since we have $Q(:,i) = Q(:,i)^T$ holds for all $i$,
this symmetrization step significantly reduces the number of inner products in the  na\"ive computation \eqref{eq:naive_diff_computation}:
\begin{equation}
\label{eq:symm_diff_computation_0}
\begin{array}{ccl}
y^TQy - x^TQx  &=&  2 \cdot \left[ Q(z_3, : )  + Q(z_4, : ) \right] y - 2 \cdot \left[  Q(z_1, : ) + Q(z_2 , : ) \right] x  \\
&& +~Q(z_1,:)\phi_x + Q(z_2,:) \phi_x - Q(z_3,:)\phi_y -Q(z_4,:)\phi_y .
\end{array}
\end{equation}
Furthermore, since $Q$ has the zero-diagonal property, note that 
\[
Q(z_1,:) \phi_x 
= Q(z_1,:) e_{z_1} + Q(z_1,:)e_{z_2}
= Q(z_1,z_1) + Q(z_1,z_2)
= Q(z_1,z_2).
\]
Similarly, we have  
\[
Q(z_2,:) \phi_x = Q(z_2,z_1), \ 
Q(z_3,:) \phi_y = Q(z_3,z_4), \ \text{ and } \
Q(z_4,:) \phi_y = Q(z_4,z_3). 
\]
Therefore, the last four terms of the right-hand side of the equality \eqref{eq:symm_diff_computation_0} can be simplified owing to the symmetry of $Q$, resulting in a more compact gradient evaluation:
\begin{equation}
\label{eq:symm_diff_computation_1}
\begin{array}{ccl}
&& y^TQy - x^TQx
= 2 \cdot \left[ Q(z_3, : )  + Q(z_4, : ) \right] y + 2 \cdot \left[ - Q(z_1, : ) - Q(z_2 , : ) \right] x  + 2 \cdot \left[ Q(z_1,:)\phi_x  - Q(z_4,:)\phi_y \right].
\end{array}
\end{equation}

\subsubsection{Feasible-Space Updates for Semi-symmetric Instances}
\label{sec:Update_SemiSym}

A QAP instance is said to be ``symmetric'' if both the flow matrix $F$ and the distance matrix $D$ are symmetric, 
``semi-symmetric'' if either $D$ or $F$ is symmetric,
and ``asymmetric'' if neither of the matrices is symmetric.
The QAPLIB \cite{QAPLIBurl} is a popular benchmarking library widely used by researchers. Out of 136 instances in the QAPLIB, only eight QAP instances formulated by Burkard and Offermann (the ``bur'' class) are asymmetric.
We further simplify the formula \eqref{eq:symm_diff_computation_1} for symmetric or semi-symmetric instances in Proposition~\ref{prop:SemisymFormula} below.

\begin{prop}
\label{prop:SemisymFormula}
Let $D \in \mathbb{R}^{n\times n}$, $F\in \mathbb{S}^n$ and $Q = \frac{1}{2}(D\otimes F) + \frac{1}{2}(D\otimes F)^T$. 
Let $x=\kvec(X)$, where $X \in P_n$, and $y=x-e_{z_1}-e_{z_2}+e_{z_3}+e_{z_4}$, with $(z_1,z_2,z_3,z_4)$ being the tuple that realizes the quadruple bit-flip.
Then,
\begin{equation}
\label{eq:diff_semi_symm_update}
\begin{array}{c}
y^TQy - x^TQx = 2 \cdot \left[ Q(z_3, : )y  + Q(z_4, : )y - Q(z_1, : )x - Q(z_2 , : )x   \right]  .
\end{array}
\end{equation}
\end{prop}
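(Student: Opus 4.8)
The plan is to start from the already-established symmetrized gradient formula \eqref{eq:symm_diff_computation_1}, which is valid for the symmetrized matrix $Q=\tfrac12(D\otimes F)+\tfrac12(D\otimes F)^T$, and show that under the extra hypothesis $F\in\mathbb{S}^n$ the residual term $2\left[Q(z_1,:)\phi_x-Q(z_4,:)\phi_y\right]$ vanishes, leaving exactly \eqref{eq:diff_semi_symm_update}. First I would invoke the simplification already carried out in the text immediately preceding the proposition: since $Q$ has zero diagonal (inherited from $D,F$, as $Q$ is a convex combination of $D\otimes F$ and its transpose) and $\phi_x=e_{z_1}+e_{z_2}$, $\phi_y=e_{z_3}+e_{z_4}$, we get $Q(z_1,:)\phi_x=Q(z_1,z_2)$ and $Q(z_4,:)\phi_y=Q(z_4,z_3)$. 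Hence the residual equals $2\left[Q(z_1,z_2)-Q(z_4,z_3)\right]$, and it suffices to prove $Q(z_1,z_2)=Q(z_4,z_3)$.

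Next I would exploit the symmetry of $F$ to rewrite $Q$ in Kronecker form. Using $(A\otimes B)^T=A^T\otimes B^T$ and $F^T=F$, we have $Q=\tfrac12(D\otimes F)+\tfrac12(D^T\otimes F)=\bar D\otimes F$, where $\bar D:=\tfrac12(D+D^T)\in\mathbb{S}^n$. Then I would translate the index rule of \Cref{algo:quadBitflip} into matrix coordinates: writing $z_1=c_1 n+r_1$ and $z_2=c_2 n+r_2$ with $r_i=z_i\bmod n$ and $c_i=\lfloor z_i/n\rfloor$, \Cref{algo:quadBitflip} gives $z_3=c_2 n+r_1$ and $z_4=c_1 n+r_2$. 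Under the column-stacking convention $x=\kvec(X)$, an index $z=cn+r$ corresponds to the Kronecker block index $c$ and the intra-block index $r$, so that $(\bar D\otimes F)(z,z')=\bar D(c,c')\,F(r,r')$ whenever $z=cn+r$ and $z'=c'n+r'$. Consequently $Q(z_1,z_2)=\bar D(c_1,c_2)\,F(r_1,r_2)$ and $Q(z_4,z_3)=\bar D(c_1,c_2)\,F(r_2,r_1)$, and these coincide because $F=F^T$. Substituting $Q(z_1,z_2)-Q(z_4,z_3)=0$ into \eqref{eq:symm_diff_computation_1} yields \eqref{eq:diff_semi_symm_update}.

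The step I expect to be the main obstacle is making the index bookkeeping airtight: one must correctly match the floor/mod formulas of \Cref{algo:quadBitflip} with the column-stacking $\kvec$ convention and with the block structure of the Kronecker product, while staying consistent about $0$- versus $1$-based indexing. Once the coordinate dictionary $z_1\leftrightarrow(r_1,c_1)$, $z_2\leftrightarrow(r_2,c_2)$, $z_3\leftrightarrow(r_1,c_2)$, $z_4\leftrightarrow(r_2,c_1)$ is established (this is just the statement that the quadruple bit-flip swaps columns $c_1$ and $c_2$ of the permutation matrix $X$), the remainder is a one-line substitution. I would also add a remark that the identical computation goes through if instead $D$ is symmetric and $F$ is arbitrary — then $Q=D\otimes\bar F$ with $\bar F=\tfrac12(F+F^T)\in\mathbb{S}^n$, and again $Q(z_1,z_2)=D(c_1,c_2)\bar F(r_1,r_2)=D(c_1,c_2)\bar F(r_2,r_1)=Q(z_4,z_3)$ — which is why the simplification holds for all semi-symmetric (hence also symmetric) instances.
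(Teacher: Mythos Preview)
Your proposal is correct and follows essentially the same approach as the paper's proof: both reduce the claim to showing $Q(z_1,z_2)=Q(z_4,z_3)$, rewrite $Q=\bar D\otimes F$ with $\bar D=\tfrac12(D+D^T)$ by exploiting $F=F^T$, translate the quadruple bit-flip indices via \Cref{algo:quadBitflip} into block/intra-block coordinates, and conclude by the symmetry of $F$. Your closing remark handles the $D$-symmetric case directly via $Q=D\otimes\bar F$, whereas the paper instead reduces that case to the $F$-symmetric one by the substitution $Z\leftarrow X^T$ after the proposition; both arguments are valid.
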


\begin{proof}
Let     $\phi_x = e_{z_1}+e_{z_2}$ and  $\phi_y = e_{z_3}+e_{z_4}$.
We show that 
\begin{equation}
\label{eq:semi_identical_char}
Q(z_1,:)\phi_x  = Q(z_4,:)\phi_y 
\end{equation}
holds, where 
\[
Q = \frac{1}{2}(D\otimes F) + \frac{1}{2}(D\otimes F)^T 
= \left( \frac{1}{2}D + \frac{1}{2}D^T \right) \otimes F. 
\]
Let $\hat{D}= \frac{1}{2}D + \frac{1}{2}D^T$.
Since  $F$ is symmetric, each $(i,j)$-th $n$-by-$n$ block submatrix within $\hat{D}\otimes F$ is symmetric:
\[
\hat{D}_{i,j} F_{k,\ell} 
= \hat{D}_{i,j} F_{\ell, k} , \  \forall i,j,k,\ell \in \{0,\ldots, n-1\}.
\] 
Hence, we have 
\begin{equation}
    \label{eq:semi_sym_Qubo}
Q(i n  + k , j n+\ell ) = Q(i n  + \ell , j n+ k ),  \  i,j,k,\ell \in \{0,\ldots, n-1\}.
\end{equation}
Let $i=\lfloor z_1/n \rfloor$ and $j=\lfloor z_2/n \rfloor$.
Recall \Cref{algo:quadBitflip}, and note that
\begin{equation}
    \label{eq:z1-z4AlgReltation}
\begin{array}{lcl}
z_1 = in+ (z_1\bmod n), & & 
z_2 = jn+ (z_2\bmod n),  \\
z_3 = jn+ (z_1\bmod n), & & 
z_4 = in+ (z_2\bmod n)  .\\
\end{array}
\end{equation}
Thus, $Q(z_1,z_2)=Q(z_4,z_3)$ holds by the identity \eqref{eq:semi_sym_Qubo}.
Now, recall that 
\[
Q(z_1,:)\phi_x = Q(z_1,z_2) \ \text{ and } \
Q(z_4,:)\phi_y = Q(z_4,z_3) .
\]
Thus, the equality~\eqref{eq:semi_identical_char} holds and the equality~\eqref{eq:diff_semi_symm_update} follows.
\end{proof}
We note that the simplification from the equality \eqref{eq:symm_diff_computation_1} to the equality \eqref{eq:diff_semi_symm_update} is a direct consequence of the matrix $Q$ being constructed from at least one symmetric matrix. This structure, stemming from the semi-symmetry of the problem data, results in the characterization \eqref{eq:semi_identical_char}.
The reduced gradient evaluation in the equality~\eqref{eq:diff_semi_symm_update}
 is equally applicable to semi-symmetric instances with a symmetric matrix $D$, rather than $F$ being symmetric.
The cyclicity of the trace operator provides
\[
\trace (FXD^TX^T) = \trace (D^TX^TFX) . 
\]
Then, by a change of variables, $Z \leftarrow X^T$, we obtain 
\[
\trace (FXD^TX^T) = \trace (D^TZFZ^T).
\]
Thus, by the equality \eqref{eq:trace_to_qaud}, we have
\[
\kvec(X)^T (D\otimes F)\kvec(X) =   \kvec(Z)^T ( F^T  \otimes D^T) \kvec(Z)
=   \kvec(Z)^T ( F  \otimes D) \kvec(Z).
\]
\Cref{algo:quboMatFormulation} shows how to formulate the data matrix $Q$ used in the binary space representation of the QAP (i.e., the QUBO matrix formulation) used throughout this paper. 
\begin{algorithm}[H]
\caption{~~QUBO Matrix Formulation}
\label{algo:quboMatFormulation}
\begin{algorithmic}[1]
\Function{\texttt{Formulate\_QUBO\_Matrix}}{$F$ $\leftarrow$ flow matrix, $D$ $\leftarrow$ distance matrix}
        \If{$F,D$ are symmetric} \Comment{symmetric instance}
           \State $Q=F\otimes D$
        \ElsIf{exactly one of $F,D$ is symmetric} \Comment{semi-symmetric instance}
            \If{$F$ is symmetric}  {$Q=D\otimes F$}
            \ElsIf{$D$ is symmetric} {$Q=F\otimes D$} \EndIf
        \State  $Q = \frac{1}{2}Q + \frac{1}{2}Q^T$
    \Else{ $\ F,D$ are not symmetric} \Comment{asymmetric instance}
        \State $Q=F\otimes D$
        \State  $Q = \frac{1}{2}Q + \frac{1}{2}Q^T$
    \EndIf
    \State \Return $Q$
\EndFunction
\end{algorithmic}
\end{algorithm}

\section{Full-Neighbourhood Search}
\label{sec:FNSearch}

In the native QAP space, given a solution represented as a permutation matrix $X\in P_n$, we call $Y$ a ``neighbour'' of $X$ if $Y$ is obtained by swapping two distinct columns of $X$. 
A ``full neighbourhood'' of $X$ is the set of all such neighbours of $X$. Hence, the full neighbourhood of $X$ consists of $\binom{n}{2}=\frac{n(n-1)}{2}$ members. 
In this section, we present an analogous interpretation of the full neighbourhood in the binary space, followed by a vectorized evaluation  mechanism for computing the gradients associated with the full neighbourhood in the binary space. 
Our approach focuses on parallelizing the evaluation of the objective function within a single iteration of the algorithm, rather than utilizing multiple parallel threads to evaluate each individual objective function. 

\subsection{Definition of ``Full Neighbourhood''}

Let $x=\kvec(X)$, where $X \in P_n$ is a permutation matrix. 
The full neighbourhood of $x$, denoted by $\cN(x)$, 
is the set of $4$-tuples 
\begin{equation}
    \label{eq:def_of_FN_QUBO}
\cN(x) := \left\{\left(z_1,z_2,
\left\lfloor \frac{z_2}{n} \right\rfloor \cdot n + \bmod(z_1, n)
,
\left\lfloor \frac{z_1}{n}\right\rfloor \cdot n + \bmod(z_2, n) 
\right) :
z_1,z_2 \in \supp(x), z_1<z_2
\right\}.
\end{equation}
Note that the last two elements in each tuple originate from the quadruple bit-flip defined in \Cref{algo:quadBitflip}.
We use the term ``neighbour'' interchangeably to refer both to 
an element \mbox{$(z_1,z_2,z_3,z_4) \in \cN(x)$} and to the vector
$y = x-e_{z_1}-e_{z_2}+e_{z_3}+e_{z_4}$ when the meaning is clear.
Since $\cN(x)$ has an equivalent interpretation in the native space $P_n$, there are exactly $\binom{n}{2}$ members in $\cN(x)$.
For a given neighbour~\mbox{$(z_1,z_2,z_3,z_4) \in \cN(x)$}, it is important to note that 
\begin{equation}
\label{eq:neighrPairRule}    
\left\lfloor \frac{z_1}{n} \right\rfloor  = \left\lfloor \frac{z_4}{n} \right\rfloor  
\text{ and }
\left\lfloor \frac{z_2}{n} \right\rfloor  = \left\lfloor \frac{z_3}{n} \right\rfloor  . 
\end{equation}
Hence, in the context of permutation matrices, 
each neighbour in  $\cN(x)$ corresponds to 
swapping the \mbox{$\left\lfloor \frac{z_1}{n}\right\rfloor$-th} column with the $\left\lfloor \frac{z_2}{n} \right\rfloor$-th column of $X$.
Lastly,
according to the definition of $\cN(x)$ in the set~\eqref{eq:def_of_FN_QUBO},
the first two elements in each $4$-tuple 
 represent the indices of $x$ assigned a value of ``1''.
Since $\cN(x)$ collects all such pairs $z_1,z_2\in \supp(x)$, taking their union collectively reconstructs $\supp(x)$.
Recall that the last two elements of each tuple designate the indices of $x$ that need to be flipped from ``0'' to ``1''. Note that these elements are uniquely determined by the first two elements.  
This observation leads to the following 
algebraic set relationships:
\begin{equation}
\label{eq:suppx_suppxCompl}
\begin{array}{lcl}
\text{supp}(x) &=&\cup_{(z_1,z_2,z_3,z_4)\in \cN(x)} \{z_1,z_2\}  , \\
\text{supp}(x)^c  &=& \cup_{(z_1,z_2,z_3,z_4)\in \cN(x)} \{z_3,z_4\}  .
\end{array}
\end{equation}
Example~\ref{example:FNillustration} 
illustrates the conformations of the full neighbourhood in three different solution spaces. 

\begin{example}(Equivalence of three configurations)
\label{example:FNillustration}
Consider the permutation \mbox{$\pi = [2, 0, 1, 3, 4] \in   \Pi_5$} and its counterpart 
$X  = $
\begin{scriptsize}
$\begin{bmatrix}
0 & 1 & 0 & 0 & 0 \\
 0 & 0 & 1 & 0 & 0 \\
 1 & 0 & 0 & 0 & 0 \\
 0 & 0 & 0 & 1 & 0 \\
 0 & 0 & 0 & 0 & 1 \\
 \end{bmatrix}$\end{scriptsize}
realized in $P_5$.
Then, swapping two elements of $\pi$ is equivalent to swapping two columns of $X$, resulting in the following $10$ permutation matrices accounting for the full neighbourhood of $X$:
\begin{scriptsize}
\[
\begin{bmatrix}
1 & 0 & 0 & 0 & 0 \\
 0 & 0 & 1 & 0 & 0 \\
 0 & 1 & 0 & 0 & 0 \\
 0 & 0 & 0 & 1 & 0 \\
 0 & 0 & 0 & 0 & 1 \\
 \end{bmatrix},
 \begin{bmatrix}
      0 & 1 & 0 & 0 & 0 \\
 1 & 0 & 0 & 0 & 0 \\
 0 & 0 & 1 & 0 & 0 \\
 0 & 0 & 0 & 1 & 0 \\
 0 & 0 & 0 & 0 & 1 \\
 \end{bmatrix},
 \begin{bmatrix}
      0 & 1 & 0 & 0 & 0 \\
 0 & 0 & 1 & 0 & 0 \\
 0 & 0 & 0 & 1 & 0 \\
 1 & 0 & 0 & 0 & 0 \\
 0 & 0 & 0 & 0 & 1 \\
 \end{bmatrix},
 \begin{bmatrix}
      0 & 1 & 0 & 0 & 0 \\
 0 & 0 & 1 & 0 & 0 \\
 0 & 0 & 0 & 0 & 1 \\
 0 & 0 & 0 & 1 & 0 \\
 1 & 0 & 0 & 0 & 0 \\
 \end{bmatrix}, 
 \begin{bmatrix}
      0 & 0 & 1 & 0 & 0 \\
 0 & 1 & 0 & 0 & 0 \\
 1 & 0 & 0 & 0 & 0 \\
 0 & 0 & 0 & 1 & 0 \\
 0 & 0 & 0 & 0 & 1 \\
 \end{bmatrix},
\]
\[
\begin{bmatrix}
     0 & 0 & 0 & 1 & 0 \\
 0 & 0 & 1 & 0 & 0 \\
 1 & 0 & 0 & 0 & 0 \\
 0 & 1 & 0 & 0 & 0 \\
 0 & 0 & 0 & 0 & 1 \\
\end{bmatrix},
\begin{bmatrix}
   0 & 0 & 0 & 0 & 1 \\
 0 & 0 & 1 & 0 & 0 \\
 1 & 0 & 0 & 0 & 0 \\
 0 & 0 & 0 & 1 & 0 \\
 0 & 1 & 0 & 0 & 0 \\ 
\end{bmatrix},
\begin{bmatrix}
     0 & 1 & 0 & 0 & 0 \\
 0 & 0 & 0 & 1 & 0 \\
 1 & 0 & 0 & 0 & 0 \\
 0 & 0 & 1 & 0 & 0 \\
 0 & 0 & 0 & 0 & 1 \\
\end{bmatrix},
\begin{bmatrix}
    0 & 1 & 0 & 0 & 0 \\
 0 & 0 & 0 & 0 & 1 \\
 1 & 0 & 0 & 0 & 0 \\
 0 & 0 & 0 & 1 & 0 \\
 0 & 0 & 1 & 0 & 0 \\
\end{bmatrix},
\begin{bmatrix}
     0 & 1 & 0 & 0 & 0 \\
 0 & 0 & 1 & 0 & 0 \\
 1 & 0 & 0 & 0 & 0 \\
 0 & 0 & 0 & 0 & 1 \\
 0 & 0 & 0 & 1 & 0 \\
\end{bmatrix}.
\]
\end{scriptsize}
\hspace{-0.9em}
Swapping two elements of $\pi$ has an analogous meaning in the feasible binary space.
Let $x=\kvec(X)$. 
Swapping two elements of $\pi$ at indices $0$ and $1$ yields the permutation $[0,2,1,3,4]$. Realizing this swap in the binary space is equivalent to the binary solution $x-e_{2}-e_{5}+e_{7}+e_{0}$.
The remaining neighbours in $\cN(x)$ can be identified similarly using Table \ref{table:FN(x)}. Note that 
$\cup_{(z_1,z_2,z_3,z_4)\in \cN(x)} \{z_1,z_2\} $ collects $\mathrm{supp}(x) = \{2,5,11,18,24\}$ and   $\cup_{(z_1,z_2,z_3,z_4)\in \cN(x)} \{z_3,z_4\}$ collects the complement $\mathrm{supp}(x)^c = \{0,1,\ldots,24\} \setminus \{2,5,11,18,24\}$.
\begin{table}[ht]
\centering
\begin{tabular}{c|c|rrrr}
\hline
   Swap Indices of $\pi$ & Label for Members in $\cN(x)$&   $z_1$ &   $z_2$ &   $z_3$ &   $z_4$ \\
\hline
  0 $\leftrightarrow$ 1 & $N_0$ &    2 &     5 &     7 &     0 \\
  0 $\leftrightarrow$ 2 & $N_1$ &   2 &    11 &    12 &     1 \\
  0 $\leftrightarrow$ 3 & $N_2$ &   2 &    18 &    17 &     3 \\
  0 $\leftrightarrow$ 4 & $N_3$ &    2 &    24 &    22 &     4 \\
  1 $\leftrightarrow$ 2 & $N_4$ &    5 &    11 &    10 &     6 \\
  1 $\leftrightarrow$ 3 & $N_5$ &   5 &    18 &    15 &     8 \\
  1 $\leftrightarrow$ 4 & $N_6$ &    5 &    24 &    20 &     9 \\
  2 $\leftrightarrow$ 3 & $N_7$ &   11 &    18 &    16 &    13 \\
  2 $\leftrightarrow$ 4 & $N_8$ &   11 &    24 &    21 &    14 \\
  3 $\leftrightarrow$ 4 & $N_9$ &   18 &    24 &    23 &    19 \\
\hline
\end{tabular}
\caption{Full neighbourhood at $x=(0, 0, 1, 0, 0, 1, 0, 0, 0, 0, 0, 1, 0, 0, 0, 0, 0, 0, 1, 0, 0, 0, 0, 0, 1)^T$.}
\label{table:FN(x)}
\end{table}
\end{example}

\subsection{Partial Update of the Full Neighbourhood}
\label{sec:PartialFNUp}

Let $\cN(x)$ be the full neighbourhood of a feasible solution $x$.
We fix a neighbour $(\bar{z}_1,\bar{z}_2,z_2^*,z_1^*) \in \cN(x)$.
Note that by the definition~\eqref{eq:def_of_FN_QUBO}, the elements in this neighbour are related by the equalities
\begin{equation}
    \label{eq:z12star_z12bar}
z_1^* = 
\left\lfloor \frac{\bar{z}_1}{n}\right\rfloor \cdot n + \bmod(\bar{z}_2, n) \ \text{ and } \
z_2^* = \left\lfloor \frac{\bar{z}_2}{n} \right\rfloor \cdot n + \bmod(\bar{z}_1, n).
\end{equation}
Now, consider the neighbour that results from the quadruple bit-flip via $(\bar{z}_1,\bar{z}_2,z_2^*,z_1^*)$,
\[
y=x-e_{\bar{z}_1}-e_{\bar{z}_2}+e_{z_1^*}+e_{z_2^*},
\]
and its full neighbourhood $\cN(y)$.
In this section, we derive $\cN(y)$ through a partial update to $\cN(x)$.

Recall, from the definition~\eqref{eq:def_of_FN_QUBO} and the equalities~\eqref{eq:suppx_suppxCompl}, that the first two elements of all members of the full neighbourhoods $\cN(x)$ and $\cN(y)$ are characterized by all pairs of elements in $\supp(x)$ and $\supp(y)$, respectively, while the last two elements are induced by the first two. Since $x$ and $y$ are related by a quadruple bit-flip, 
we have $|\supp(x)\cap \supp(y)| = n-2$. 
This implies that, among the members in $\cN(x)$ and $\cN(y)$, 
those originating from $\supp(x)\cap \supp(y)$ are identical; see \Cref{fig:suppx_suppt_intersection}(a).
Hence, among $\binom{n}{2}$ members in each of $\cN(x)$ and $\cN(y)$, exactly $\binom{n-2}{2}$ members are shared, that is, 
\[
|\cN(x) \cap \cN(y)| = \binom{n-2}{2} .
\]
Consequently, 
$\binom{n}{2}-\binom{n-2}{2}=2n-3$ members within $\cN(x)$ require updates to construct $\cN(y)$. 
\begin{figure}[ht]
    \centering
    \includegraphics[width=0.7\linewidth]{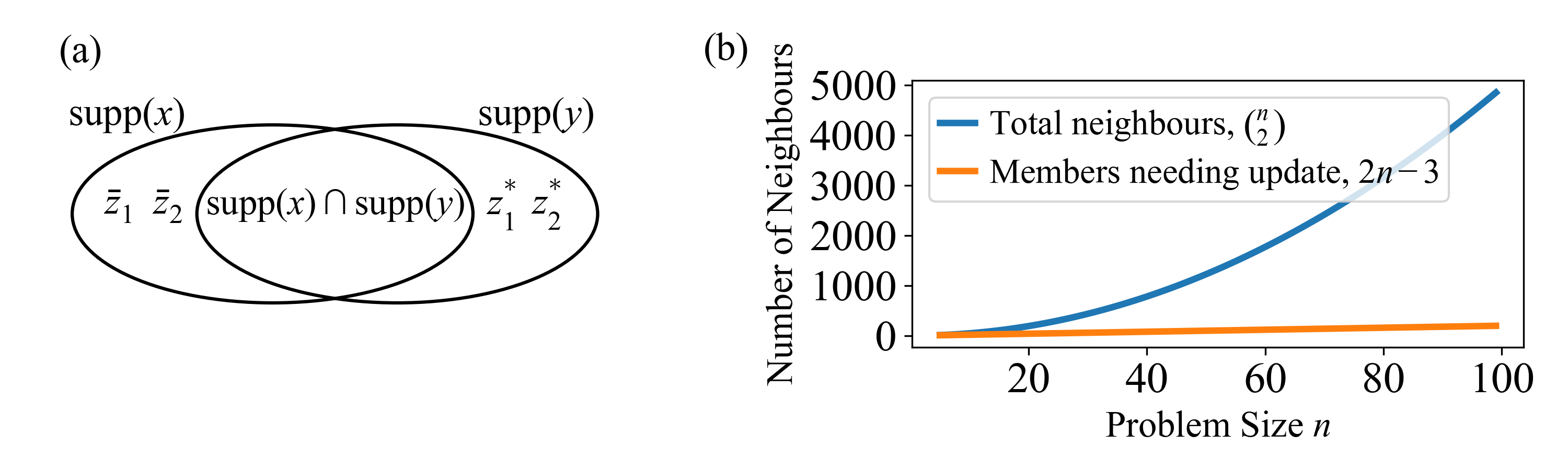}
    \caption{(a) Relationship between two feasible binary solutions $x, y \in \{0,1\}^{n^2}$. Consequently, there are $\binom{n-2}{2}$ elements in $\cN(x)\cap \cN(y)$. (b) Disparity between the total number of members in the full neighbourhood and those requiring updates.}
    \label{fig:suppx_suppt_intersection}
\end{figure}

Now, we define the set
\[  
\cU_{\cN(x)}
:=
\left\{ (z_1,z_2,z_3,z_4)\in \mathcal{N}(x) : z_1 \in \{\bar{z}_1,\bar{z}_2 \} \text{ or }  z_2 \in \{ \bar{z}_1,\bar{z}_2 \} 
\right\}.
\]
Note that the $2n-3$ members within $\cN(x)$ requiring updates are characterized by $\cU_{\cN(x)}$.
Notably, the cardinality of $\cU_{\cN(x)}$ is linear in $n$, while the cardinality of $\cN(x)$ is quadratic in $n$; see \Cref{fig:suppx_suppt_intersection}(b).

\subsubsection{Updates Realized Using the Tuples in $\cN(x)$}

Let $\dot{\cup}$ denote the disjoint union. 
We partition $\cU_{\cN(x)}$ based on the membership of $\bar{z}_1$ and $\bar{z}_2$ within each tuple: 
\[
\cU_{\cN(x)} = 
\cU_{\cN(x)}^{\bar{z}_1}
\dot{\cup}  \ 
\cU_{\cN(x)}^{\bar{z}_2}  
\dot{\cup}  \ 
\cU_{\cN(x)}^{\bar{z}_1,\bar{z}_2} ,
\]
where
\[
\begin{array}{rcl}
\cU_{\cN(x)}^{\bar{z}_1}
&=&
\left\{ (z_1,z_2,z_3,z_4)\in \cN(x) : 
\bar{z}_1 \in \{z_1,z_2 \} , 
\bar{z}_2 \not\in \{z_1,z_2 \} 
\right\} , \\ 
\cU_{\cN(x)}^{\bar{z}_2}
&=&
\left\{ 
(z_1,z_2,z_3,z_4)\in \mathcal{N}(x) : \bar{z}_2 \in \{z_1,z_2 \} , \bar{z}_1 \not\in \{z_1,z_2 \} 
\right\} ,  \text{ and }\\
\mathcal{U}_{\mathcal{N}(x)}^{\bar{z}_1,\bar{z}_2}
&=&
\left\{ (z_1,z_2,z_3,z_4)\in \mathcal{N}(x) :  
\{\bar{z}_1,\bar{z}_2 \} = \{z_1,z_2\}    
\right\} .
\end{array}
\]
We note that 
\[
\left|  \cU_{\mathcal{N}(x)} \right| = 
\left|  \cU_{\cN(x)}^{\bar{z}_1} \right|+
\left|  \cU_{\cN(x)}^{\bar{z}_2}\right|+
\left|  \cU_{\cN(x)}^{\bar{z}_1,\bar{z}_2} \right|=(n-2)+(n-2)+1
=2n-3.
\]
Let $(z_1,z_2,z_3,z_4)\in \mathcal{U}_{\mathcal{N}(x)}^{\bar{z}_1} \subseteq \mathcal{N}(x)$ and assume that $z_1=\bar{z}_1$. 
We observe the changes made to $(z_1,z_2,z_3,z_4)$  when $z_1$ is updated to $z_1^*$, while $z_2$ remains unchanged.  
From the definition~\eqref{eq:def_of_FN_QUBO}, the third element, $z_3$, in this tuple is transformed according to
\begin{equation}
    \label{eq:UFN_z1_thrid}
\left\lfloor \frac{z_2}{n} \right\rfloor \cdot n + \bmod(z_1, n) 
\rightarrow
\left\lfloor \frac{z_2}{n} \right\rfloor \cdot n + \bmod(z_1^*, n) .
\end{equation}
The fourth element remains unchanged due to the relationship~\eqref{eq:neighrPairRule}; hence, we have the equality
\begin{equation}
    \label{eq:UFN_z1_fourth}
\left\lfloor \frac{z_1}{n} \right\rfloor \cdot n + \bmod(z_2, n)
=
\left\lfloor \frac{z_1^*}{n} \right\rfloor \cdot n + \bmod(z_2, n)  .
\end{equation}
Thus, it follows from the assignment \eqref{eq:UFN_z1_thrid} and the equality
\eqref{eq:UFN_z1_fourth} that the neighbour $(z_1,z_2,z_3,z_4)$ in $\mathcal{U}_{\mathcal{N}(x)}^{\bar{z}_1}$ is updated as follows:
\begin{equation}
    \label{eq:z1_nbr_formula}
(z_1,z_2,z_3,z_4)
\to \left( z_1^*,z_2, \left\lfloor \frac{z_2}{n} \right\rfloor \cdot n + \bmod(z_1^*,n) , z_4 \right) .
\end{equation}
Similarly, for $(z_1,z_2,z_3,z_4)\in \cU_{\cN(x)}^{\bar{z}_2}$ with $z_2 = \bar{z}_2$,
the neighbour is updated as follows:
\begin{equation}
    \label{eq:z2_nbr_formula}
(z_1,z_2,z_3,z_4)
\to \left( 
z_1,z_2^*, z_3, \left\lfloor \frac{z_1}{n} \right\rfloor \cdot n  + \bmod(z_2^*,n)
\right) .
\end{equation}
Lastly, for $(z_1,z_2,z_3,z_4)\in \cU_{\cN(x)}^{\bar{z}_1,\bar{z}_2}$,
the neighbour is updated as follows:
\[
(z_1,z_2,z_3,z_4)
\to \left(
z_1^*, z_2^*,
\left\lfloor \frac{z_2}{n} \right\rfloor \cdot n +\bmod(z_1^*,n) , 
\left\lfloor \frac{z_1}{n} \right\rfloor \cdot n +\bmod(z_2^*,n)
\right) =
(z_1^*,z_2^*,\bar{z}_2,\bar{z}_1)
.
\]

\begin{example}
\label{example:FNiupdatellustration}
Recall Example \ref{example:FNillustration} and $\cN(x)$ in Table \ref{table:FN(x)}.
Let ``$N_7$'' from the full neighbourhood $\cN(x)$  corresponding to $(z_1,z_2,z_3,z_4)=(11,18,16,13)$ be chosen for the quadruple bit-flip. 
Then, the corresponding new solution is $y=x-e_{11}-e_{18}+e_{16}+e_{13}$.
Then, the full neighbourhood at $y$, $\cN(y)$, is shown in Table \ref{table:FN(y)}. Note that 
$N_1,N_2,N_4,N_5,N_7,N_8,N_9$ from $\cN(x)$ are updated to produce $\cN(y)$ in Table \ref{table:FN(y)}, while $N_0,N_3,N_6$ from $\cN(x)$ and $\cN(y)$ are identical.
The changed elements are highlighted in bold in Table~\ref{table:FN(y)}.
Of note, exactly two elements in  $N_1,N_2,N_4,N_5,N_8,N_9$ are modified, whereas all elements of $N_7$ are affected. 
\begin{table}[ht]
\centering

\begin{tabular}{|c|rrrr||c|rrrr|}
\hline
\multicolumn{5}{|c||}{$\mathcal{N}(x)$} & \multicolumn{5}{c|}{$\mathcal{N}(y)$} \\
\hline
label &   $z_1$ &   $z_2$ &   $z_3$ &   $z_4$ & label &   $z_1$ &   $z_2$ &   $z_3$ &   $z_4$ \\
\hline
$N_0$ &    2 &     5 &     7 &     0 &$N_0$ &     2 &     5 &     7 &     0 \\
$N_1$ &   2 &    11 &    12 &     1  &$N_1$ &     2 &    \textbf{13} &    12 &     \textbf{3} \\
$N_2$ &   2 &    18 &    17 &     3 &$N_2$ &     2 &    \textbf{16} &    17 &     \textbf{1} \\
$N_3$ &    2 &    24 &    22 &     4 &$N_3$&     2 &    24 &    22 &     4\\
$N_4$ &    5 &    11 &    10 &     6 &$N_4$&     5 &    \textbf{13} &    10 &     \textbf{8}\\
$N_5$ &   5 &    18 &    15 &     8 & $N_5$&     5 &   \textbf{16} &    15 &     \textbf{6}\\
$N_6$ &    5 &    24 &    20 &     9&  $N_6$&     5 &    24 &    20 &     9  \\
$N_7$ &   11 &    18 &    16 &    13& $N_7$ &    \textbf{13} &    \textbf{16} &    \textbf{18} &    \textbf{11}\\
$N_8$ &   11 &    24 &    21 &    14 & $N_8$ &    \textbf{13} &    24 &    \textbf{23} &    14\\
$N_9$ &   18 &    24 &    23 &    19 &$N_9$ &    \textbf{16} &    24 &    \textbf{21} &    19 \\
\hline
\end{tabular}
    \caption{Full neighbourhood $\cN(x)$ at the solution $x$  from Example \ref{example:FNillustration} and the full neighbourhood $\cN(y)$ at \mbox{$y=(0, 0, 1, 0, 0, 1, 0, 0, 0, 0, 0, \textbf{0}, 0, \textbf{1}, 0, 0, \textbf{1}, 0, \textbf{0}, 0, 0, 0, 0, 0, 1)^T$.}}
    \label{table:FN(y)}
\end{table}

\end{example}

\subsubsection{Updates Executed Using Matrices}

In principle, it is possible to construct $\cN(y)$ from scratch by enumerating all neighbours of $x$ directly. 
However, instead of performing this exhaustive enumeration, we use a more efficient approach by constructing two sparse matrices. 
These matrices are designed to systematically update the set of  neighbours from $\cN(x)$ to $\cN(y)$. 
By leveraging these matrices, we can perform targeted neighbour updates, avoiding the need to rebuild the full neighbourhood from the ground up.

Let the members in the set $\cN(x)$ be initialized to follow the \textit{lexicographic} order. The neighbours~$N_0,N_1, \ldots, N_9$ in Tables \ref{table:FN(x)} and \ref{table:FN(y)} show examples of the lexicographic ordering, which is determined by the first two elements~$z_1$ and~$z_2$.
Let the matrices $S_{12},S_{34}\in \{0,1\}^{\binom{n}{2}\times n^2}$, 
where the supports of the $i$-th rows are
\begin{equation}
    \label{eq:S12S34Construction}
\text{supp}(S_{12}[i,:]) =\{z_1, z_2\}, \ \ \text{supp}(S_{34}[i,:]) =\{z_3, z_4\} ,   \ \ \text{where }  (z_1,z_2,z_3,z_4) \text{ is the $i$-th member of } \mathcal{N} (x).
\end{equation}
Note that $\supp(x)=\supp( \mathbf{1}^TS_{12})$ and 
$\{0,\ldots, n^2-1\} \setminus \supp(x)=\supp( \mathbf{1}^TS_{34})$.

In order to detect the rows of $S_{12}$ and $S_{34}$ that correspond to the neighbours in $\cU_{\cN(x)}$, let $b\in \{0,1,2\}^{n^2}$, where
\begin{equation}
    \label{eq:blockPickerVecotr}
b_i = \begin{cases} 1 & \text{if } i =\bar{z}_1, \\
2 & \text{if } i =\bar{z}_2, \\
0 & \text{otherwise. }
\end{cases} 
\end{equation}
We then use $S_{12}b$ to detect the rows of $S_{12}$ that contain the members of
$\mathcal{U}_{\mathcal{N}(x)}$.
Consider the following sets:
\begin{equation}
    \label{eq:RowSets}
\cR_1 := \{i : (S_{12}b)_i = 1\},  \ 
\cR_2 := \{i : (S_{12}b)_i = 2\}, \ \text{ and } \
\cR_3 := \{i : (S_{12}b)_i = 3\} .
\end{equation}
The set 
$\cR_1$ contains the row indices of $S_{12}$ and $S_{34}$ that correspond to  the neighbours $ \mathcal{U}_{\mathcal{N}(x)}^{\bar{z}_1}$.
Similarly, $\cR_2$ and $\cR_3$ contain the row indices of  $S_{12}$ and $S_{34}$ corresponding to the neighbours 
$\mathcal{U}_{\mathcal{N}(x)}^{\bar{z}_2}$ and $\mathcal{U}_{\mathcal{N}(x)}^{\bar{z}_1,\bar{z}_2}$, respectively.
Of note, 
$|\cR_1| = |\cR_2| 
= |\mathcal{U}_{\mathcal{N}(x)}^{\bar{z}_1} |
= | \mathcal{U}_{\mathcal{N}(x)}^{\bar{z}_2} |
=n-2$
and $|\cR_3| = |\mathcal{U}_{\mathcal{N}(x)}^{\bar{z}_1,\bar{z}_2}|=1$.

Now we update $S_{12}$ to represent the first two elements in each $4$-tuple of  $\cN(y)$ as follows:
\begin{enumerate}
\item 
For all $i\in \cR_1$, let
$S_{12}(i,\bar{z}_1) = 0$  and $S_{12}(i,z_1^*) = 1$;
\item
For all $i\in \cR_{2}$,  let
$S_{12}(i,\bar{z}_2) = 0$  and $S_{12}(i,z_2^*) = 1$; and 
\item 
For $i\in \cR_3$, let
$S_{12}(i,[\bar{z}_1,\bar{z}_2])=0$ and 
$S_{12}(i,[z_1^*,z_2^*])=1$.
\end{enumerate}
Note that this update involves changing $2n-2$ elements of $S_{12}$ from 0 to 1, and another $2n-2$ changes from 1 to 0. 
See \Cref{fig:ExampleSupdate}(a), (b), and (c) for an illustration of the transition of this update for an instance of size $n=12$.
The update procedure for $S_{12}$ is outlined in \Cref{algo:S_update_12}.
\begin{figure}[ht]
    \centering
    \includegraphics[width=0.9\linewidth]{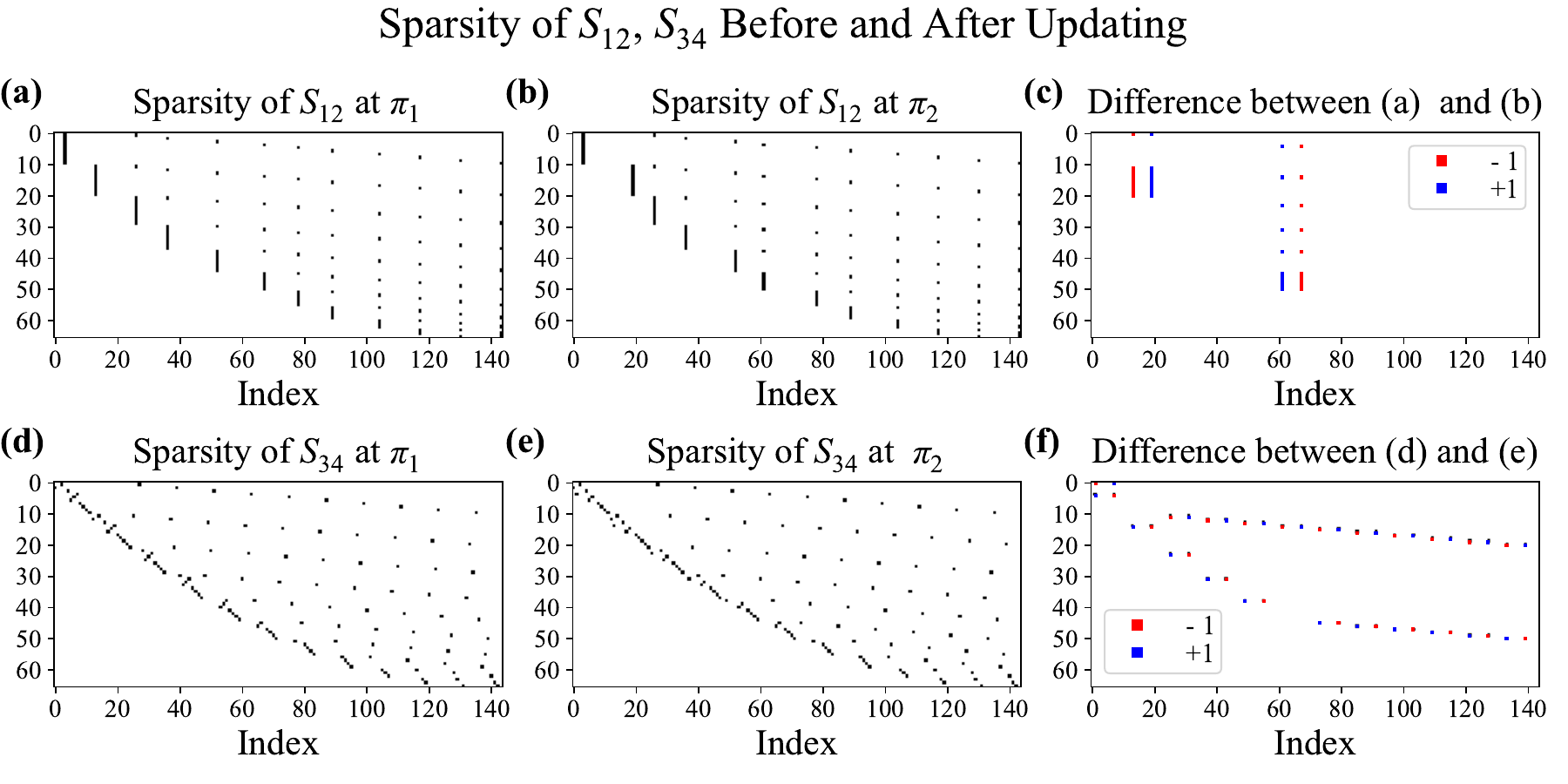}
    \caption{
    The transition of the full-neighbourhood lookup matrices $S_{12}, S_{34}$ at the permutation assignments $\pi_1 =[3,  \textbf{1},  2,  0,  4,  7 , 6,  \textbf{5} , 8 , 9 ,10 ,11]\in \Pi_{12}$ and $\pi_2=[ 3,  \textbf{5},  2,  0,  4,  7,  6,  \textbf{1},  8,  9, 10, 11]\in \Pi_{12}$.
    The assignment $\pi_2$ is 
     obtained after swapping the elements of $\pi_1$ at indices $1$ and $7$. (a) and (d) show the full neighbourhood of $\pi_1$;  (b) and (e) show the full neighbourhood of $\pi_2$. (c) highlights the difference between (a) and (b), and (f) highlights the between (d) and (e). Among $66 = n(n-1)/2$ members in the full neighbourhood of $\pi_1$, $21=2n-3$ members need updates, resulting in $44=4(n-1)$ element changes in both  $S_{12}$ and $S_{34}$.
    }
    \label{fig:ExampleSupdate}
\end{figure}
\begin{algorithm}[H]
\caption{~~Partial Neighbourhood Update for $S_{12}$}
\label{algo:S_update_12}
\begin{algorithmic}[1]
\Function{\texttt{Partial\_Neighbourhood\_Update\_S$_{12}$}}{$z_1,z_2,z_1^*,z_2^*$, $S_{12}$, $\cR_1, \cR_2, \cR_3$}
        \State $S_{12}(i,z_1) = 0, S_{12}(i,z_1^*) = 1 $, $i\in \cR_1$
        \State $S_{12}(i,z_2) = 0, S_{12}(i,z_2^*) = 1 $, $i\in \cR_2$
        \State $S_{12}(i,[z_1,z_2]) = 0, S_{12}(i,[z_1^*,z_2^*]) = 1 $, $i\in \cR_3$
    \State \Return $S_{12}$
\EndFunction
\end{algorithmic}
\end{algorithm}

Recall that a neighbour has an analogous meaning in the space of permutation matrices.
Once a neighbour is chosen from $\cN(\kvec(X))$, it analogously means that two columns of $X\in P_n$ are swapped while the other $n-2$ columns remain invariant. 
The indices of these invariant column $X$ are characterized by the set
\[
\cC_0 = \{0,1,2,...,n-1\}\setminus \left\{ \left\lfloor \frac{\bar{z}_1}{n}\right\rfloor,  \left\lfloor \frac{\bar{z}_2}{n}\right\rfloor \right\} .
\] 
Given a set $\mathcal{D}$ of numbers, 
we define the function $\texttt{List}_{\texttt{sorted}}(\mathcal{D})$ by
\[
\left( \texttt{List}_{\texttt{sorted}}(\mathcal{D})\right)_i = d_i,  \ 
\text{where } d \text{ is the vector of the elements of $\mathcal{D}$ sorted in non-decreasing order}.
\]
Let $c=\texttt{List}_{\texttt{sorted}}(\cC_0) \in \mathbb{R}^{n-2}$.
To update the rows in $\cR_1$ of $S_{34}$,
recall the assignments \eqref{eq:UFN_z1_thrid} and \eqref{eq:z1_nbr_formula}. 
The vector 
\[
c_1= n\cdot c+ (\bar{z}_1 \bmod n)
\]
indicates the columns of $S_{34}$  that need to be updated from a value of ``1'' to ``0''.
Similarly, the column indices of $S_{34}$ associated with $\cR_2$ that require updating from ``1'' to ``0'' are identified using  
\[
c_2= n \cdot c+ (\bar{z}_2 \bmod n).
\]
As for the coordinates of $S_{34}$ that require updating from ``0'' to ``1'',
recall \eqref{eq:z12star_z12bar} and note that
\[
(\bar{z}_2 \bmod n) - (z^*_1 \bmod n) + (\bar{z}_1 \bmod n)
= (\bar{z}_1 \bmod n).
\]
Thus, the elements of $S_{34}$ with columns corresponding to $n\cdot c + (z^*_1 \bmod n)$ and rows associated with $\cR_1$ can be identified using $c_2$ and $\cR_1$. 
Similarly, $c_1$ and $\cR_2$ can be used to identify the coordinates of $S_{34}$ that require updating from ``0'' to ``1''.
As with $S_{12}$,  updating $S_{34}$ involves changing $4n-4$ elements:  $2n-2$ elements changing from ``0'' to ``1'', and another $2n-2$ changing from ``1'' to ``0''.

Let $r_1=\texttt{List}_{\texttt{sorted}}(\cR_1)$
and 
$r_2=\texttt{List}_{\texttt{sorted}}(\cR_2)$.
Flipping the values of $S_{34}$ at the following coordinates provides the required update:
\begin{subequations} 
\begin{flalign}
  \{( r_1(i),c_1(i))\}_{i\in \{0,1,\ldots,n-3\}} , &  \quad \{( r_1(i),c_2(i))\}_{i\in \{0,1,\ldots,n-3\}} , 
      \label{eq:swapIndS34_R1} \\
   \{( r_2(i),c_1(i))\}_{i\in \{0,1,\ldots,n-3\}} ,  & \quad \{( r_2(i),c_2(i))\}_{i\in \{0,1,\ldots,n-3\}},
   \label{eq:swapIndS34_R2} \\
     \{( i ,\bar{z}_1),( i,\bar{z}_2)\}_{i\in \cR_3}  ,
     &  \quad  \{( i,z_1^* ),( i, z_2^*) \}_{i\in \cR_3}   . \label{eq:swapIndS34_R3}
\end{flalign}
\end{subequations}
The procedure for updating $S_{34}$ is outlined
in \Cref{algo:S_update_34}, and the transition of $S_{34}$ is illustrated in  \Cref{fig:ExampleSupdate}(d), (e), and (f) for an instance of size $n=12$.
The full-neighbourhood update executed using arrays is summarized in \Cref{algo:S_update_forall}.
The notation ``$\leftrightarrow$'' in \Cref{algo:S_update_34} stands for the swap operator.
We note that following the update governed by \Cref{algo:S_update_forall} preserves the lexicographic ordering of the subsequent full neighbourhoods.
\begin{algorithm}[H]
\caption{~~Partial Neighbourhood Update for $S_{34}$}
\label{algo:S_update_34}
\begin{algorithmic}[1]
\Function{\texttt{Partial\_Neighbourhood\_Update\_S$_{34}$}}{$\bar{z}_1,\bar{z}_2,z_1^*,z_2^*$, $S_{34}$, $r_1, r_2, \cR_3$}
        \State $\cC_0 \gets \{0,1,2,...,n-1\}\setminus \left\{ \left\lfloor \frac{\bar{z}_1}{n}\right\rfloor,  \left\lfloor \frac{\bar{z}_2}{n}\right\rfloor \right\}$
        \State $c \gets \texttt{List}_{\texttt{sorted}}(\cC_0)$
        \State $c_1 \gets n\cdot c +(\bar{z}_1 \bmod n)$
        \State $c_2 \gets n\cdot c +(\bar{z}_2 \bmod n)$
        \State $S_{34}(r_1[i],c_1[i]) \leftrightarrow S_{34}(r_1[i],c_2[i])$, \ 
        $S_{34}(r_1[i],c_2[i]) \leftrightarrow S_{34}(r_1[i],c_1[i])$, $i =0,1,\ldots, n-3$ \Comment{\eqref{eq:swapIndS34_R1}}
        \State $S_{34}(r_2[i],c_1[i]) \leftrightarrow S_{34}(r_2[i],c_2[i])$, \ 
        $S_{34}(r_2[i],c_2[i]) \leftrightarrow S_{34}(r_2[i],c_1[i])$, $i =0,1,\ldots, n-3$   \Comment{\eqref{eq:swapIndS34_R2}}
        \State $S_{34}(i,[\bar{z}_1,\bar{z}_2]) = 1, S_{34}(i,[z_1^*,z_2^*]) = 0 $, $i\in \cR_3$   \Comment{\eqref{eq:swapIndS34_R3}}
    \State \Return $S_{34}$
\EndFunction
\end{algorithmic}
\end{algorithm}
\begin{algorithm}[H]
\caption{~~Update of Full Neighbourhood}
\label{algo:S_update_forall}
\begin{algorithmic}[1]
\Function{\texttt{Update\_Full\_Neighbourhood}}{$S_{12}$, $S_{34}$, $z_1,z_2$}
\State $z_2^* \gets \left\lfloor \frac{z_2}{n}\right\rfloor \cdot n + \bmod(z_1, n) $, $z_1^* \gets \left\lfloor \frac{z_1}{n}\right\rfloor \cdot n + \bmod(z_2, n)$
\State $b \gets e_{z_1}+2e_{z_2}$ \Comment{The vector \eqref{eq:blockPickerVecotr}}
\State 
$\cR_1 \gets \{i : (S_{12}b)_i = 1\}, \
\cR_2 \gets \{i : (S_{12}b)_i = 2\}, \ 
\cR_3 \gets \{i : (S_{12}b)_i = 3\} $
\Comment{The sets \eqref{eq:RowSets}}
\State  $S_{12} \gets $ \texttt{Partial\_Neighbourhood\_Update\_S$_{12}$}($z_1,z_2,z_1^*,z_2^*$, $S_{12}$, $\cR_1, \cR_2, \cR_3$) \Comment{\Cref{algo:S_update_12}}
\State 
$r_1 \gets \texttt{List}_{\texttt{sorted}}(\cR_1)$,
$r_2 \gets \texttt{List}_{\texttt{sorted}}(\cR_2)$
\State  $S_{34} \gets $ \texttt{Partial\_Neighbourhood\_Update\_S$_{34}$}($z_1,z_2,z_1^*,z_2^*$, $S_{34}$, $r_1, r_2, \cR_3$) \Comment{\Cref{algo:S_update_34}}
\State \Return $S_{12},S_{34}$
\EndFunction
\end{algorithmic}
\end{algorithm}

\subsection{Vectorized Gradient Evaluation of the Full Neighbourhood for \\Symmetric and Semi-symmetric Instances}
\label{sec:vectoFNEval}

Given a feasible solution $x$ and its neighbour $y$, 
recall, from Proposition~\ref{prop:SemisymFormula}, that semi-symmetric instances allow for the simplification 
\[
\langle y,  Qy \rangle - \langle x, Qx \rangle 
= 
2 \cdot \left[ Q(z_3,:)y+Q(z_4,:)y-Q(z_1,:)x - Q(z_2,:)x \right] .
\]
In this section, we focus on symmetric and semi-symmetric instances. 
Let $y^i$ be the feasible solution realized by the $i$-th element of $\cN(x)$.
We aim to compute the vector $\Delta \in \mathbb{R}^{\binom{n}{2}}$, where the $i$-th component of
$\Delta$ is 
\[
\Delta_i =
Q(z_3,:)y^i+Q(z_4,:)y^i-Q(z_1,:)x - Q(z_2,:)x ,  \text{ where }
(z_1,z_2,z_3,z_4) \text{ is the $i$-th member of $\cN(x)$}.
\]
Each $\Delta_i$ represents the gradient of the $i$-th neighbour of $\cN(x)$. In our discussion, we omit the scalar multiplier ``$2$'' from Proposition~\ref{prop:SemisymFormula} since it does not affect the relative ordering of the elements in $\Delta$.

Given a vector $x\in \{0,1\}^{n^2}$, we use a matrix $P_{\supp(x)}:=I_{n^2}(\supp(x),:)\in \{0,1\}^{|\supp(x)|\times n^2}$ that is capable of pulling the elements of $x$ associated with the support $\supp(x)$.
Let $K=I_n \otimes \mathbf{1}_n \in \{0,1\}^{n^2\times n}$.
Then, for any vector $x$ with $\supp(x) = \{i_1,i_2,\ldots, i_n\}$, $K P_{\supp(x)} w$ copies 
the elements of $w$ corresponding to $\supp(x)$ exactly $n$ times, that is,
\begin{equation}
    \label{eq:breakComputation}
K (P_{\supp(x)} w)
=
( \underbrace{ w_{i_1}, w_{i_1}, \ldots, w_{i_1} }_{n \text{ times}}, 
\underbrace{w_{i_2}, w_{i_2}, \ldots, w_{i_2} }_{n \text{ times}},
\ldots ,
\underbrace{w_{i_n}, w_{i_n}, \ldots, w_{i_n} }_{n \text{ times}}
)^T .
\end{equation}
We decompose $\Delta_i$ into two parts:
\begin{equation}
    \label{eq:DeltaFormula}
\Delta_i = 
\tilde{\Delta}_i + \mathcal{E}_i ,
\end{equation}
where
\begin{equation}
    \label{eq:defofIthDeltaError}
\begin{array}{rcl}
\tilde{\Delta}_i
&=&
Q(z_3,:)x+Q(z_4,:)x-Q(z_1,:)x - Q(z_2,:)x  ,  \text{ and } \\
\mathcal{E}_i &=& Q(z_3,:) (y^i-x)  + Q(z_4,:) (y^i-x) .
\end{array}
\end{equation}
Propositions \ref{prop:gainComputation} and \ref{prop:errorComputation} show how the elements $\tilde{\Delta}_i$ and $\mathcal{E}_i$ associated with each neighbour in $\cN(x)$ can be computed using simple matrix--vector operations.
\begin{prop}
\label{prop:gainComputation}
Let $\cN(x)$ be the full neighbourhood of  
$x=\kvec(X)$, where $X \in P_n$.
Let $S_{34}$ be the array satisfying
the equalities~\eqref{eq:S12S34Construction}.
Then, the $i$-th element of the vector 
\begin{equation}
    \label{eq:approxGrad}
S_{34}g
\end{equation}
yields $\tilde{\Delta}_i$ defined in the equality~\eqref{eq:defofIthDeltaError}, where
\begin{equation}
   \label{eq:gainComputation}
\begin{array}{rcl}
g = -K  ( P_{\supp(x)} ((Qx )\circ x))
+ (Q x)\circ (\mathbf{1}_{n^2}-x) .  \\
 \end{array}
\end{equation}
\end{prop}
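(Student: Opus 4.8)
The plan is to unfold the definition of $\tilde{\Delta}_i$ into four inner products and match each of the two "positive" terms $Q(z_3,:)x$, $Q(z_4,:)x$ against the first summand of $g$, and the two "negative" terms $-Q(z_1,:)x$, $-Q(z_2,:)x$ against the second summand of $g$. The key observation is that for any index $m$, the scalar $Q(m,:)x = (Qx)_m$ is simply the $m$-th component of the precomputed vector $Qx$; so $\tilde{\Delta}_i = (Qx)_{z_3} + (Qx)_{z_4} - (Qx)_{z_1} - (Qx)_{z_2}$. Writing $\tilde{\Delta}_i = S_{34}[i,:]\,(Qx) - S_{12}[i,:]\,(Qx)$ using the row-support conventions~\eqref{eq:S12S34Construction}, the whole problem becomes: show that the single vector $g$ in~\eqref{eq:gainComputation} satisfies $S_{34}g = S_{34}(Qx) - S_{12}(Qx)$ as vectors in $\mathbb{R}^{\binom{n}{2}}$. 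The trick is that $S_{12}$ does not appear on the right-hand side of the claimed formula, so I must absorb the $-S_{12}(Qx)$ contribution into $S_{34}g$ itself.

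First I would handle the easy half. Since $z_3,z_4 \in \supp(x)^c$, the vector $(Qx)\circ(\mathbf{1}_{n^2}-x)$ agrees with $Qx$ on exactly the coordinates in $\supp(x)^c$ and is zero on $\supp(x)$; because $\supp(S_{34}[i,:]) = \{z_3,z_4\}\subseteq \supp(x)^c$, we get $S_{34}\big((Qx)\circ(\mathbf{1}_{n^2}-x)\big) = (Qx)_{z_3} + (Qx)_{z_4}$ for every row $i$, which is exactly the positive part of $\tilde\Delta_i$. Then I would handle the subtler half: I need $S_{34}\big(-K(P_{\supp(x)}((Qx)\circ x))\big)$ to equal $-(Qx)_{z_1} - (Qx)_{z_2}$ in row $i$. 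Here the relations~\eqref{eq:neighrPairRule} are the crux — they say $\lfloor z_4/n\rfloor = \lfloor z_1/n\rfloor$ and $\lfloor z_3/n\rfloor = \lfloor z_2/n\rfloor$, i.e. $z_3$ and $z_4$ lie in the same length-$n$ blocks as $z_2$ and $z_1$ respectively. By~\eqref{eq:breakComputation}, the vector $K(P_{\supp(x)}v)$ is block-constant: on the $b$-th length-$n$ block it takes the constant value $v_{i_b}$ where $i_b$ is the unique support element of $x$ in that block. Taking $v = (Qx)\circ x$, which is supported on $\supp(x)$ with $v_{i_b} = (Qx)_{i_b}$, the value on block $\lfloor z_1/n\rfloor$ is $(Qx)_{z_1}$ (since $z_1$ is the support element in its own block) and the value on block $\lfloor z_2/n\rfloor$ is $(Qx)_{z_2}$. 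Evaluating $S_{34}[i,:]$ against this block-constant vector picks out coordinates $z_3$ and $z_4$, which sit in blocks $\lfloor z_2/n\rfloor$ and $\lfloor z_1/n\rfloor$, yielding $(Qx)_{z_2} + (Qx)_{z_1}$; the minus sign then gives $-(Qx)_{z_1} - (Qx)_{z_2}$, as required.

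Adding the two halves gives $(S_{34}g)_i = (Qx)_{z_3} + (Qx)_{z_4} - (Qx)_{z_1} - (Qx)_{z_2} = \tilde\Delta_i$, which closes the argument. The main obstacle I anticipate is purely bookkeeping: verifying carefully that $z_1$ (resp. $z_2$) is genuinely the \emph{unique} element of $\supp(x)$ in its block $\lfloor z_1/n\rfloor$ (resp. $\lfloor z_2/n\rfloor$) — this uses that $x = \kvec(X)$ for a permutation matrix $X$, so each column of $X$, i.e. each length-$n$ block of $x$, contains exactly one $1$ — and then tracking the block identifications~\eqref{eq:neighrPairRule} through the action of $K P_{\supp(x)}$ without swapping the roles of $z_1$ and $z_2$. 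Everything else is a direct substitution using the identity $(Qx)_m = Q(m,:)x$ and the support conventions for $S_{12}, S_{34}$.
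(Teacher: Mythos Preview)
Your proposal is correct and follows essentially the same route as the paper's proof: both split $g$ into its two summands, use $\supp(S_{34}[i,:])=\{z_3,z_4\}\subseteq\supp(x)^c$ to read off the positive part $(Qx)_{z_3}+(Qx)_{z_4}$ from $(Qx)\circ(\mathbf{1}_{n^2}-x)$, and then invoke the block relations~\eqref{eq:neighrPairRule} together with the block-constant structure~\eqref{eq:breakComputation} of $K P_{\supp(x)}((Qx)\circ x)$ to extract $-(Qx)_{z_1}-(Qx)_{z_2}$. The only blemish is that your opening plan swaps which summand of $g$ yields the positive versus negative terms, but your detailed argument immediately corrects this and is in fact slightly more explicit than the paper in justifying why $z_1,z_2$ are the unique support elements in their respective blocks.
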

\begin{proof}
Let $\bar{x}$ be the negation of the current variable assignment $x$, that is, 
\[
\bar{x} = \mathbf{1}_{n^2}-x .
\]
We fix a row index $i$ of $S_{34}$.  
Observe that
\[
e_i^TS_{34} \left( (Qx)\circ \bar{x}\right)
=(e_{z_3}^T + e_{z_4}^T) \left( (Qx)\circ \bar{x}\right)
= Q(z_3,:)x+Q(z_4,:)x.
\]
Let $\supp(x) = \{i_1,i_2, \ldots, i_n\}$. Then,
\[
K  ( P_{\supp(x)} ((Qx )\circ x)) = ( \underbrace{ (Qx )_{i_1}, (Qx )_{i_1}, \cdots, (Qx )_{i_1} }_{n \text{ times}}, 
\ldots ,
\underbrace{(Qx )_{i_n}, (Qx )_{i_n}, \ldots, (Qx )_{i_n} }_{n \text{ times}}
)^T .
\]
Let
$\{z_1,z_2\} = \supp(e_i^TS_{12})$ be the corresponding counterpart of $\{z_3,z_4\}$.
The relationship  \eqref{eq:neighrPairRule} implies 
\[
e_i^T S_{34} \left( K  ( P_{\supp(x)} ((Qx )\circ x)) \right) = Q(z_1,:)x + Q(z_2,:)x.
\]
Then, with the vector 
\[g = - K  ( P_{\supp(x)} ((Qx )\circ x))
+ (Qx)\circ \bar{x} , 
\]
one can verify that 
$\tilde{\Delta} = S_{34}g$.
\end{proof}
We note that the matrix $I_n \otimes \mathbf{1}_n^T $  in the proof above can be used instead of the projection $P_{\supp(x)}$. Thus, a fixed matrix   $(I_n \otimes \mathbf{1}_n) (I_n \otimes \mathbf{1}_n^T) = I_n \otimes (\mathbf{1}_n \mathbf{1}_n^T)$ can be used instead of $K  P_{\supp(x)}$.
We illustrate the computation of the vector $g$ from the equality~\eqref{eq:gainComputation} in~\Cref{fig:gainComputationPic}.
\begin{figure}[H]
    \centering
    \includegraphics[width=0.75\linewidth]{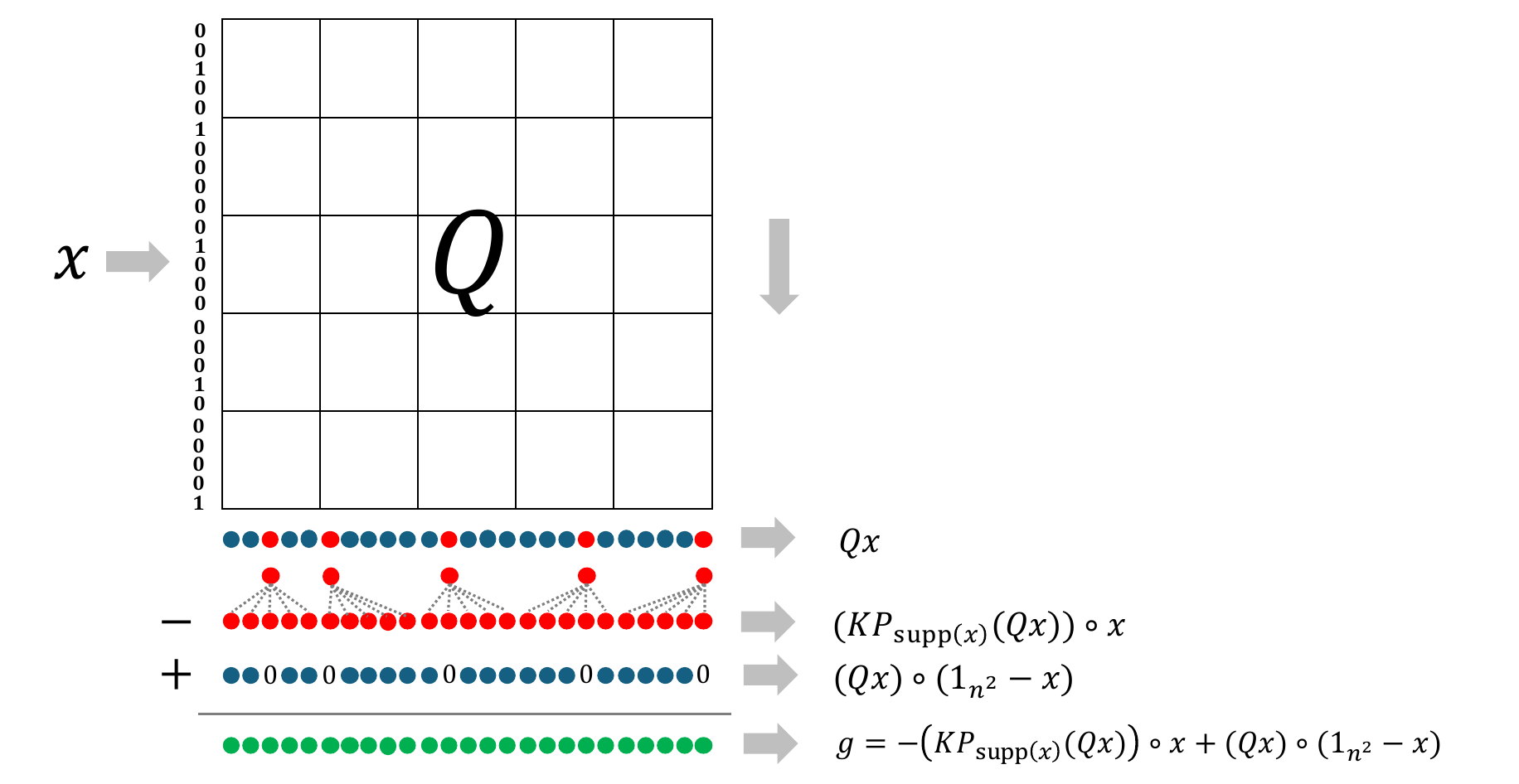}
    \caption{Diagram of the computation of the equality~\eqref{eq:gainComputation} at variable assignment \mbox{$x=\kvec \left( \begin{bmatrix}  e_2 & e_0 & e_1 & e_3 & e_4    \end{bmatrix} \right)$.}}
    \label{fig:gainComputationPic}
\end{figure}
\begin{prop}
\label{prop:errorComputation}
Let $\cN(x)$ be the full neighbourhood of 
$x=\kvec(X)$, where $X \in P_n$.
Let $S_{34}$ be the array satisfying the equality~\eqref{eq:S12S34Construction}.
Then, the $i$-th element of the vector
\begin{equation}
\label{eq:errorCorrectorVector}
\begin{array}{rcl}
\mathcal{E}
=\left((S_{34}Q)\circ S_{34}  \right) \mathbf{1}_{n^2} 
 \end{array}
\end{equation}
yields the equality~\eqref{eq:defofIthDeltaError}.
\end{prop}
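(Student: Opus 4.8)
\emph{Proof proposal.}
The plan is to verify the asserted vector identity one coordinate at a time. Fix a row index $i$, let $(z_1,z_2,z_3,z_4)$ be the $i$-th member of $\cN(x)$, and recall that $y^i = x - e_{z_1} - e_{z_2} + e_{z_3} + e_{z_4}$, so $y^i - x = -e_{z_1} - e_{z_2} + e_{z_3} + e_{z_4}$. Substituting this into the definition of $\mathcal{E}_i$ in \eqref{eq:defofIthDeltaError} rewrites it as $\bigl(Q(z_3,:)+Q(z_4,:)\bigr)\bigl(-e_{z_1}-e_{z_2}+e_{z_3}+e_{z_4}\bigr)$, i.e.\ a sum of eight scalar entries of $Q$. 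Two of these, $Q(z_3,z_3)$ and $Q(z_4,z_4)$, vanish because $Q$ is assembled from $D$ and $F$ as in \Cref{algo:quboMatFormulation} and both matrices have zero diagonals, hence so does $Q$.

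The crux is to show that the four remaining off-support contributions also vanish, namely $Q(z_3,z_1) = Q(z_3,z_2) = Q(z_4,z_1) = Q(z_4,z_2) = 0$. This is where the structure of the quadruple bit-flip enters. Writing $a = \lfloor z_1/n\rfloor$, $b = \lfloor z_2/n\rfloor$, $p = z_1 \bmod n$, $q = z_2 \bmod n$, \Cref{algo:quadBitflip} (equivalently \eqref{eq:z1-z4AlgReltation} and \eqref{eq:neighrPairRule}) gives $z_3 = bn + p$ and $z_4 = an + q$. Reading the $(\alpha_1 n + \alpha_2,\ \beta_1 n + \beta_2)$-entry of $Q$ through the Kronecker structure, $\{\alpha_2,\beta_2\}$ appear as arguments of one of $D, F$ and $\{\alpha_1,\beta_1\}$ as arguments of the other. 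For $Q(z_3,z_1)$ and $Q(z_4,z_2)$ the within-block index pair is $\{p,p\}$ and $\{q,q\}$, so only the diagonal entries of the within-block factor occur; for $Q(z_3,z_2)$ and $Q(z_4,z_1)$ the block index pair is $\{b,b\}$ and $\{a,a\}$, so only the diagonal entries of the block factor occur. Either way the relevant factor is one of $D, F$, whose diagonals are zero; this argument is symmetric under swapping $D$ and $F$, so it covers both orderings in \Cref{algo:quboMatFormulation}. Hence $\mathcal{E}_i = Q(z_3,z_4) + Q(z_4,z_3)$.

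Finally I would evaluate the $i$-th component of $\bigl((S_{34}Q)\circ S_{34}\bigr)\mathbf{1}_{n^2}$. By the construction \eqref{eq:S12S34Construction}, $e_i^T S_{34} = e_{z_3}^T + e_{z_4}^T$, so the $i$-th row of $S_{34}Q$ is $Q(z_3,:) + Q(z_4,:)$; taking the Hadamard product with $e_i^T S_{34}$ and contracting against $\mathbf{1}_{n^2}$ keeps exactly columns $z_3$ and $z_4$, giving $Q(z_3,z_3) + Q(z_3,z_4) + Q(z_4,z_3) + Q(z_4,z_4) = Q(z_3,z_4) + Q(z_4,z_3)$ after using the zero diagonal of $Q$ once more. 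This matches the value of $\mathcal{E}_i$ obtained above, and since $i$ was arbitrary the identity follows. I expect the vanishing of the four cross terms to be the only genuine obstacle; everything else is routine bookkeeping with the selection matrix $S_{34}$.
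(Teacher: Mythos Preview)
Your proposal is correct and follows essentially the same route as the paper: split $\mathcal{E}_i$ into the $\phi_y$-part and the $\phi_x$-part, identify the $i$-th entry of $\bigl((S_{34}Q)\circ S_{34}\bigr)\mathbf{1}_{n^2}$ with $Q(z_3,z_4)+Q(z_4,z_3)$ via the row structure of $S_{34}$, and kill the four cross terms $Q(z_3,z_1),Q(z_3,z_2),Q(z_4,z_1),Q(z_4,z_2)$ using the Kronecker factorization and the zero diagonals of $D$ and $F$. The only cosmetic difference is notation ($a,b,p,q$ versus the paper's $j,k,t_1,t_2$) and that the paper writes the vanishing step as $e_{z_3}^T(D\otimes F)e_{z_1} = (e_j^TDe_k)(e_{t_1}^TFe_{t_1})$ rather than in words; your remark that the argument survives swapping $D$ and $F$ is exactly the paper's ``without loss of generality $Q=D\otimes F$''.
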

\begin{proof} 
For a fixed $i$-th neighbour $(z_1,z_2,z_3,z_4) \in \cN(x)$, we observe that
\begin{equation}
    \label{eq:errorProofDecomp}
\begin{array}{rcl}
\mathcal{E}_i &=& Q(z_3,:) (y^i-x)  + Q(z_4,:) (y^i-x) \\
&=&  Q(z_3,:) (\phi_y-\phi_x)  +  Q(z_4,:) (\phi_y-\phi_x)  \\
&=&
(Q(z_3,:)+Q(z_4,:)) \phi_y -(Q(z_3,:)+Q(z_4,:)) \phi_x ,
\end{array}
\end{equation}
where $\phi_x$ and $\phi_y$ are as defined in the equalities~\eqref{eq:diffphi_xy}.

We first consider the terms $(Q(z_3,:)+Q(z_4,:)) \phi_y$ in the equality~\eqref{eq:errorProofDecomp}.
For each $i$, 
 observe that
\[
e_{i}^TS_{34}Q e_j = Q(z_3,j)+Q(z_4,j), \text{ where }
\{z_3,z_4 \} = \supp(e_i^T S_{34}). 
\]
Then, the $i$-th element of the column sum of $((S_{34}Q)\circ S_{34})$ is
\[
e_i^T\left((S_{34}Q)\circ S_{34}  \right) \mathbf{1}_{n^2}
=
Q(z_3,z_3)+Q(z_3,z_4)+Q(z_4,z_3)+Q(z_4,z_4)\\
=
Q(z_3,z_4)+Q(z_4,z_3) .
\]
Thus, 
$e_i^T\left((S_{34}Q)\circ S_{34}  \right) \mathbf{1}_{n^2} = (Q(z_3,:)+Q(z_4,:)) \phi_y$.

We now consider $(Q(z_3,:)+Q(z_4,:)) \phi_x$ in the equality~\eqref{eq:errorProofDecomp}. We aim to show that $Q(z_3,:) \phi_x=0$ and $Q(z_4,:)\phi_x=0$.
Recall the algebraic relationship \eqref{eq:z1-z4AlgReltation} among $z_1,z_2,z_3$, and $z_4$.
For simplicity, we let 
\[
\begin{array}{lcl}
z_1 = kn + t_1, & & 
z_2 = jn+  t_2,  \\
z_3 = jn+  t_1, & & 
z_4 = kn+ t_2  .\\
\end{array}
\]
Then, we may represent $e_{z_1}=e_{kn+ t_1} \in \{0,1\}^{n^2}$ using the Kronecker product
\[
e_{z_1}  = e_k \otimes e_{t_1} , \text{ where } e_k, e_{t_1} \in \{0,1\}^{n}. 
\]
Similarly, with $e_k, e_j, e_{t_1},e_{t_2}\in \{0,1\}^n$, we have
\[
\begin{array}{lcl}
e_{z_2} = e_j \otimes e_{t_2}, \ 
e_{z_3} = e_j \otimes e_{t_1},  \text{ and } 
e_{z_4} =  e_k \otimes e_{t_2} .  
\end{array}
\]
Note that
$Q(z_3,:)\phi_x = e_{z_3}^TQ e_{z_1} + e_{z_3}^TQ e_{z_2}$.
We let $Q = D\otimes F$ without loss of generality.
Now observe the expansion
\[
\begin{array}{rcl}
e_{z_3}^TQe_{z_1} 
&= & (e_j \otimes e_{t_1})^T (D\otimes F) ( e_k \otimes e_{t_1} ) \\
&= & (e_j \otimes e_{t_1})^T (De_k \otimes Fe_{t_1})   \\
&= & (e_j^T  De_k ) \otimes (e_{t_1}^T F  e_{t_1})  \\
&= & 0,
\end{array}
\]
where the last equality holds, since $F$ is assumed to have the zero-diagonal property.
Similarly, since $D$ is also assumed to have the zero-diagonal property, we have 
\[
\begin{array}{rcl}
e_{z_3}^TQe_{z_2} 
&= & (e_j \otimes e_{t_1})^T (D\otimes F) ( e_j \otimes e_{t_2}) \\
&= & (e_j \otimes e_{t_1})^T (De_j \otimes Fe_{t_2})   \\
&= & (e_j^T  De_j ) \otimes (e_{t_1}^T F  e_{t_2})  \\
&= & 0 .
\end{array}
\]
Using a similar expansion, one can obtain the equality $Q(z_4,:)\phi_x=0$.
Therefore, the element 
$e_i^T\mathcal{E}$ is equal to~$Q(z_3,:) (y^i-x)  + Q(z_4,:) (y^i-x)$.
\end{proof}
We note that the assumption that  $D$ and $F$ both having the zero-diagonal property plays a crucial role in deriving the formula~\eqref{eq:errorCorrectorVector}. 
As shown in equality \eqref{eq:errorProofDecomp}, the value $\mathcal{E}_i$ comprises up to eight elements of the matrix $Q$. 
However, because the matrices $D$ and $F$ both have the zero-diagonal property, the value $\mathcal{E}_i$ comprises at most two elements of $Q$, as shown in the proof.

The steps for computing $\Delta$ using  Propositions~\ref{prop:gainComputation} and~\ref{prop:errorComputation} is summarized in \Cref{algo:FN_Local_Eval}. 
We view $\tilde\Delta$ as an approximate gradient and $\Delta$ as the exact gradient. 
\Cref{algo:FN_Local_Eval} is illustrated in Example~\ref{example:parallelEvel}, with a problem size of $n = 5$.
\begin{algorithm}[H]
\caption{~~Full-Neighbourhood Evaluation}
\label{algo:FN_Local_Eval}
\begin{algorithmic}[1]
\Function{\texttt{Evaluate\_Full\_Neighbourhood}}{$x$, $S_{12},S_{34}$}
        \State $g \gets - K  ( P_{\supp(x)} ((Qx )\circ x))
+ (Qx)\circ ( \mathbf{1}_{n^2}-x) $ \Comment{The equality~\eqref{eq:gainComputation}}
        \State $\Delta \gets S_{34}g$  \Comment{The approximate gradient~\eqref{eq:approxGrad}}
        \If{exact gradient is desired}
        \State $\Delta \gets \Delta +  \left((S_{34}Q)\circ(S_{34})  \right) \mathbf{1}_{n^2}$ \Comment{The exact gradient using the error corrector~\eqref{eq:errorCorrectorVector}} 
        \EndIf
    \State \Return $\Delta$
\EndFunction
\end{algorithmic}
\end{algorithm}
\begin{example}(Illustration of \Cref{algo:FN_Local_Eval})
\label{example:parallelEvel}
We continue with the permutation assignment $\pi=[2,0,1,3,4]$ from Example \ref{example:FNillustration}.
Consider the following distance and flow matrices:
\[
D=\begin{bmatrix}
     0 & 1 & 0 & 1 & 4 \\
 1 & 0 & 4 & 2 & 0 \\
 0 & 0 & 0 & 3 & 5 \\
 5 & 3 & 1 & 0 & 1 \\
 4 & 5 & 0 & 3 & 0 \\
\end{bmatrix} \in \mathbb{R}^{5\times 5}, \quad
F = \begin{bmatrix}
     0 & 2 & 1 & 3 & 4 \\
 2 & 0 & 3 & 2 & 2 \\
 1 & 3 & 0 & 7 & 0 \\
 3 & 2 & 7 & 0 & 3 \\
 4 & 2 & 0 & 3 & 0 \\
\end{bmatrix} \in \mathbb{S}^5 .
\]
Let $Q=\frac{1}{2}(D\otimes F)+\frac{1}{2}(D\otimes F)^T$ be formed via \Cref{algo:quboMatFormulation}.
Then
\[ 
K  ( P_{\supp(x)} ((Qx )\circ x)) = 
(I_5 \otimes \mathbf{1}_5)  
\begin{pmatrix}
    11.5 & 15 & 13 & 20.5 & 21
\end{pmatrix}^T \in \mathbb{R}^{25},
\]
and
\[
\begin{array}{ccrrrrrrrrrrrrr}
(Qx)\circ (\mathbf{1}_{25}-x) &=& (  16 &  13&   0&  12&   8.5& 0&  13&  23.5& 18.5& 11.5&  10 &   0   \ldots \\ 
& &16&  7.5 &
 14&  13.5& 13.5 & 8.5 & 0&  14&   9&  21&  24&  33&   0 )^T \in \mathbb{R}^{25}.
\end{array}
\]
Hence, the vector $g$ in the equality~\eqref{eq:gainComputation} is 
\[
\begin{array}{ccrrrrrrrrrrrrrr}
g &=& ( 4.5 &   1.5 &-11.5&   0.5&  -3&  -15&   -2&    8.5&   3.5&  -3.5&  -3 & -13  \ldots\\
 &&   3&   -5.5&   1&   -7&   -7&  -12&  -20.5&  -6.5& -12&    0&    3&   12& -21 )^T \in \mathbb{R}^{25}.
\end{array} 
\]
The computations of the vector~\eqref{eq:approxGrad}, representing the approximate gradient $S_{34}g$, and the equality~\eqref{eq:errorCorrectorVector}, representing the error correction vector $\mathcal{E}$, are 
\[
\begin{array}{lclrrrrrrrrrr}
 \tilde{\Delta} &=& 
(    13& 4.5& -11.5&   0&   -5&   -3.5& -15.5& -12.5&   1&  5.5
 )^T  ,\\
 \mathcal{E} &=&
(  \,\,\, 2&  0& 21&  0&  8&  0& 20&  8& 10& 12
   )^T . 
 \end{array}
\]
The exact gradient is computed by
$\Delta= \tilde\Delta + \mathcal{E} = \begin{pmatrix}
    15  &4.5& 9.5&  0&  3& -3.5 & 4.5& -4.5 &11 &  17.5
\end{pmatrix}^T$.
\end{example}

\subsection{Parallel Full-Neighbourhood Search Algorithm}

We summarize all of the routines introduced in
 \Cref{sec:localUpdate}, \Cref{sec:PartialFNUp}, and \Cref{sec:vectoFNEval} in
 \Cref{algo:FN_All_routine}.

\begin{algorithm}[H]
\caption{~~Parallel Full-Neighbourhood Local Search Algorithm}
\label{algo:FN_All_routine}
\begin{algorithmic}[1]
\Function{\texttt{Full\_Neighbourhood\_Local\_Search}}{$F \gets$ flow matrix, $D\gets $ distance matrix,  $X \gets $ permutation matrix, $i_\text{max}$ $\gets$ maximum number of iterations} 
        \State $Q \gets \texttt{Formulate\_QUBO\_Matrix}(F,D)$ \Comment{\Cref{algo:quboMatFormulation}} 
        \State $x\gets \kvec(X)$ 
        \State $(f_{\text{best}}, x_{\text{best}} ) \gets ( \langle x, Qx \rangle , x) $ \State Initialize $S_{12},S_{34}$ \Comment{The equality~\eqref{eq:S12S34Construction}}
        \State $i \gets 0$
        \While{$i <$ $i_\text{max}$ }
            \State $\Delta \gets$ \texttt{Evaluate\_Full\_Neighbourhood}($x$, $S_{12}$, $S_{34}$)  \label{line:algoFNevalLine}
            \Comment{\Cref{algo:FN_Local_Eval}}
            \State $(z_1,z_2,z_3,z_4) \gets $  \texttt{Choose\_Move}($S_{34}, \Delta$) \label{line:pickHeuristic} \Comment{A heuristic of choice}
            \State $x(z_1) \gets 0, \  x(z_2) \gets 0, \ x(z_3) \gets 1, \ x(z_4)\gets 1  $ \Comment{A quadruple bit-flip} 
        \If{$\langle x, Qx \rangle <f_{\text{best}}$}
            \State $(f_{\text{best}}, x_{\text{best}} ) \gets ( \langle x, Qx \rangle , x) $ 
        \EndIf
        \State $S_{12} ,S_{34} \gets$  \texttt{Update\_Full\_Neighbourhood}($S_{12} ,S_{34},z_1,z_2$)    \label{line:algoUpdateS}
        \Comment{\Cref{algo:S_update_forall}}
        \State $i \gets i + 1$
        \EndWhile
    \State \Return $(f_{\text{best}}, x_{\text{best}} )$
\EndFunction
\end{algorithmic}
\end{algorithm}

We make a few remarks pertaining to \Cref{algo:FN_All_routine}.
The explicit heuristic is not specified the function \texttt{Choose\_Move} (line \ref{line:pickHeuristic} of \Cref{algo:FN_All_routine}).
This flexibility is a strength of our approach and allows for the choosing of the most appropriate local search heuristic. This adaptability ensures that the approach can be easily integrated with a wide range of heuristics available in the literature. 
The array $S_{34}$ is passed as an input to \texttt{Choose\_Move} because, once a value $\Delta_i$ provides the gradient, the corresponding neighbour must be identified for the quadruple bit-flip. 
The computational cost associated with the error correction term 
$\left((S_{34}Q)\circ S_{34})  \right) \mathbf{1}_{n^2}$ is significant in the function
\texttt{Evaluate\_Full\_Neighbourhood} (line \ref{line:algoFNevalLine} of \Cref{algo:FN_All_routine}) primarily due to the presence of the matrix--matrix product $S_{34}Q$.
As such, we may consider omitting the error correction routine and rely on the vector~\eqref{eq:approxGrad} which is the approximation. 
There are two rationales behind this.
Each $\mathcal{E}_i$ involves two elements of $Q$. 
Meanwhile, 
given $y^i$ derived from the $i$-th member $(z_1,z_2,z_3,z_4)$ 
of $\cN(x)$, 
the exact gradient is computed via
\[
\Delta_i =
Q(z_3,:)y^i+Q(z_4,:)y^i-Q(z_1,:)x - Q(z_2,:)x .
\]
Note that $\Delta_i$ involves $4n$ elements of $Q$.
Therefore, by excluding $\mathcal{E}_i$,
only two out of $4n$ terms are miscalculated.
As the problem order $n$ increases, the impact on these two error terms diminishes.
At the same time, if the algorithm always chooses the move that leads to the best immediate reduction in the value of the objective function, it many become trapped in a local minimum. 
Hence, a heuristic mechanism is needed to help the algorithm escape from local minima. 
Moreover, selecting a good candidate from a full-neighbourhood search requires relative ordering of values in $\Delta$, rather than the values themselves.
We perform benchmarking with respect to the impact of omitting the error correction steps in~\Cref{sec:ExactApproxNumeric} below.
In \Cref{sec:IMCimplementation}, we provide a blueprint of the circuit implementation of \Cref{algo:FN_All_routine}, intended to bridge the gap between our algorithmic framework and its physical hardware realization. We also discuss operational challenges involved in realizing the algorithm using analog IMC hardware, as well as potential mitigation strategies.

\section{Numerical Experiments}
\label{sec:Numerics}

We used instances from the QAPLIB~\cite{QAPLIBurl} of various sizes ranging from $n=12$ to $n=100$ to evaluate the performance of \Cref{algo:FN_All_routine}.
All instances from the QAPLIB follow a size-based naming convention, ``identifier\_$n$\_version'', where $n$ indicates the problem size.
The relative optimality gap was chosen as the performance metric.
Let 
\begin{equation}
    \label{eq:fbest_notation}
f_\text{best} (i_\text{max} )
:= \min_x \left\{ { \langle x,Qx \rangle : x \text{ visited by \Cref{algo:FN_All_routine} up to $i_\text{max}$}   } \right\}.
\end{equation}
We let $f^*$ be the best known objective value obtained from the QAPLIB. We define the relative optimality gap as a function of the maximum allowed iterations:
\begin{equation}
    \label{eq:relgap_formula}
\relgap(i_\text{max}) :=
\frac{f_\text{best}(i_\text{max}) - f^*}{f^*}.
\end{equation}
We let $\mathcal{T}$ represent the set of trials, or repeats, that is, the number of times an instance is called by \Cref{algo:FN_All_routine}, and ${\relgap}^t$ be the relative gap obtained at the $t$-th trial.
The best optimality gap is then quantifiable by the minimum of the relative optimality gap:
\[
\relgapMin(i_\text{max}) := \min_{t \in \mathcal{T}} \left\{ {\relgap}^t(i_\text{max}) \right\}, \text{ for } i_\text{max} \in \mathbb{N}.
\]
We evaluate our approach in two settings of gradient computations. 
Let $x$ be a feasible binary solution with its neighbour $y=x-e_{z_1}-e_{z_2}+e_{z_3}+e_{z_4}$, where $(z_1,z_2,z_3,z_4) \in \cN(x)$.
One approach uses the exact gradient $\Delta$ using the equality~\eqref{eq:DeltaFormula},
while the other employs the approximation $\tilde\Delta$ (see Proposition~\ref{prop:gainComputation}).
In the context of  \Cref{sec:vectoFNEval},
we evaluate gradients in $\tilde\Delta + \mathcal{E}$ or $\tilde\Delta$.
We use the symbol $\Theta$ to mean the following:
\[
\Theta =
\begin{cases}
    \tilde\Delta + \mathcal{E} & \text{if the exact gradient is used,} \\
    \tilde\Delta & \text{if the approximate gradient is used.}
\end{cases}
\]
\subsection{Heuristics}
\label{sec:Heuristics}

We evaluate five heuristics that can be employed in the \texttt{Choose\_Move} function (line \ref{line:pickHeuristic} in Algorithm \ref{algo:FN_All_routine}).
The first heuristic, labelled ``\texttt{Greedy}'',  is the greedy approach.
Given a set of candidates $\cN(x)$ and their corresponding  value of $\Theta$, 
we select the one that results in the smallest value, that is, 
\begin{equation}
    \label{eq:greedyHeu}
N^* =  \texttt{Greedy}(x)  \in \Argmin_{N} \{ \Theta :  N  \in \cN(x) ,\text{ with } \Theta \text{ computed from  \Cref{algo:FN_Local_Eval}} \} .
\end{equation}
The second heuristic is a pseudo-greedy approach, labelled ``\texttt{Top10}''.
This heuristic randomly chooses one neighbour from the set of neighbours that yields the top 10 best elements in $\Theta$. 
Let $\widehat{\Theta}=\texttt{List}_{\texttt{sorted}}(\Theta)$, with $ \Theta$  computed using  $\cN(x)$. Let 
$\pi\in \Pi\left(\binom{n}{2} \right)$
be the permutation that yields the ordered vector  $\widehat{\Theta} = \Theta(\pi)$.
We then select a neighbour at random:
\begin{equation}
    \label{eq:top10Heu}
N^* =  \texttt{Top10}(x) 
 \in \left\{ \pi^{-1}(0) , \pi^{-1}(1), \ldots,   \pi^{-1}(9)      \right\} .
\end{equation}
The third heuristic we use is a random walk heuristic inspired by the well-known WalkSAT heuristic~\cite{walkSATBreak_KautzSelmanCohen} in the area of satisfiability problems.
Given a probability $p\in (0,1)$, this heuristic chooses a neighbour based on the following rule:
\begin{equation}
    \label{eq:walkSATHeu}
N^* =
\texttt{WalkQAP}(x)
:=\begin{cases}
\texttt{Top10}(x) & \text{with the probability $p$},    \\
\text{randomly selected neighbour from $\cN(x)$} & \text{with the probability $1-p$.}
\end{cases}
\end{equation}

The fourth heuristic is the tabu search heuristic~\cite{Taillard91}, labelled ``\texttt{Tabu}''.
In brief, a tabu search constructs a so-called tabu list, which records  the feasible points visited up to a given limit. 
Once $\widehat{\Theta}=\texttt{List}_{\texttt{sorted}}(\Theta)$ is computed, this heuristic sequentially iterates over the neighbours within the full neighbourhood.
If a neighbour $i$ results in the value of $f(x) + \widehat{\Theta}_i$ being better than the best objective value found so far, we select that neighbour for the next iteration. 
Otherwise, we choose the incumbent neighbour, provided it is not in the tabu list. 
Once a neighbour is selected, it is appended to the tabu list, and if the tabu list is at its size limit, the oldest item in the list is removed. 

The final heuristic we use is simulated annealing, labelled ``\texttt{SA}''.
The initial temperature $T_\text{high}$ is estimated as 
$\delta_h/ \log(p_\text{initial})$, where $\delta_h$ is the $\gamma_\text{initial}$-th percentile of sampled nonzero absolute objective differences and $p_\text{initial}$ is a given acceptance probability.
Similarly, the final temperature is estimated as 
$T_\text{low} = \delta_l / \log(p_\text{final})$, where $\delta_l$ is the $\gamma_\text{final}$-th percentile of the same distribution and $p_\text{final}$ is a given acceptance probability.
These energy differences are obtained by evaluating configurations drawn randomly from the solution space. 
We then form the temperature schedule following an exponential cooling scheme, where the temperature at the $i$-th annealing step is defined as 
$T_i = T_\text{high} \left(\frac{T_\text{low}}{T_\text{high}}\right)^{\frac{i}{i_{\max} -1}}$.
Given the temperature sequence $\{T_i\}_{i=1}^{i_{\max}}$ and a current solution $x_c$,  a candidate solution $x_i \in \cN(x_c)$ is accepted at iteration $k$ with probability
$\min \left\{ 1, \ \exp\left[ - \frac{ (f(x_c) + \Theta_i )-f(x_c)}{T_k} \right] \right\}$.

The parameters used for the heuristics is as follows. 
For the \texttt{WalkQAP} heuristic, we set the probability parameter $p=0.95$.
For the \texttt{Tabu} heuristic, we set the maximum length of the tabu list to $20$, with the oldest entry being removed when a new neighbour is added whenever the list exceeds this limit.
For the \texttt{SA} heuristic, objective value differences are sampled from 10 random configurations, each with 10 neighbours.  
The parameters settings for $T_\text{high}$ and $T_\text{low}$ are as follows:
$p_\text{initial}= 0.8$,
$p_\text{final} = 0.1$,
$\gamma_\text{initial}= 50$, and
$\gamma_\text{final} = 5$.

\subsection{Comparison of Exact and Approximate Gradients}
\label{sec:ExactApproxNumeric}
In this section, we discuss the results of our experiments using the heuristics listed in \Cref{sec:Heuristics} for two settings of $\Theta$. 
The number of repeats was set to $100$, that is, $|\cT|=100$.
The Intel Xeon E7-8890 v3 running at 2.50 GHz with 144 cores  was used for our experiments. 
Our aim is to assess the impact of omitting the error correction term $\mathcal{E}$ presented in Proposition~\ref{prop:errorComputation}.
The primary motivation for omitting this step stems from its computational cost.
Computing $\mathcal{E}$
involves a matrix--matrix multiplication $S_{34}Q$, which requires more-substantial processing resources than a matrix--vector multiplication. 
These operations significantly increase the overall computation time, especially as the problem size grows.
The second reason is that when it comes to choosing a neighbour in a given full neighbourhood, the relative ordering of the values in $\Theta$ is important, not the values themselves.
For example, given $\Theta^1 = (-70,-40,-100,3,5)$ and $\Theta^2 = (-80,-50,-90,3,5)$ representing two distinct gradients,  we select the neighbour associated with 
the smallest value, which is the third element of each vector. 
Finally, IMC hardware devices typically have limited bit-precision capabilities. 
As a result, quantization of the nonzero elements in $Q$ becomes necessary. 
This introduces another potential source of error due to the loss of precision. 
Consequently, it is crucial to design algorithms that are resilient to multiple sources of error.
Despite these challenges, incorporating the error correction step may still be feasible in certain scenarios, particularly when matrix--matrix multiplications can be efficiently performed \cite{Wang2021Scalable}. 

\Cref{tab:heuristic_gaps} shows the minimum relative optimality gaps, $\relgapMin(10^5)$, achieved by each heuristic under both approximate and exact gradient methods across 20 QAPLIB instances. Each value reflects the best relative gap obtained over $10^5$ iterations over 100 trials. For most instances, the \texttt{Top10} and \texttt{WalkQAP} heuristics consistently achieve near-optimal values. However, \texttt{Greedy} and \texttt{Tabu} heuristics exhibit higher gaps when the approximate gradient is used. We analyze the behaviour of the \texttt{Greedy}, \texttt{Tabu}, and \texttt{Top10} heuristics below.
\begin{table}[ht]
\centering
\begin{tabular}{l|rrrrr|rrrrr}
\toprule
 & \multicolumn{5}{c|}{Approximate Gradient} & \multicolumn{5}{c}{Exact Gradient} \\
  & \texttt{Greedy} & \texttt{SA} & \texttt{Tabu} & \texttt{Top10} & \texttt{WalkQAP} & \texttt{Greedy} & \texttt{SA} & \texttt{Tabu} & \texttt{Top10} & \texttt{WalkQAP} \\
\midrule
chr12a & 97.30 & 0 & 97.30 & 0 & 0 & 3.81 & 0 & 0 & 0 & 0 \\
chr15a & 199.13 & 0 & 199.13 & 0 & 0 & 0 & 0 & 0 & 0 & 0 \\
chr25a & 184.88 & 2.32 & 184.88 & 0 & 0 & 16.33 & 0 & 0 & 0 & 0 \\
esc16a & 0 & 0 & 2.94 & 0 & 0 & 0 & 0 & 0 & 0 & 0 \\
esc32e & 200.00 & 0 & 200.00 & 0 & 0 & 0 & 0 & 0 & 0 & 0 \\
had12 & 1.82 & 0.24 & 1.82 & 0 & 0 & 0 & 0 & 0 & 0 & 0 \\
had20 & 1.59 & 1.47 & 1.53 & 0 & 0 & 0 & 0 & 0 & 0 & 0 \\
lipa20a & 3.58 & 3.34 & 3.58 & 0 & 0 & 1.06 & 0 & 0 & 0 & 0 \\
lipa40a & 2.03 & 2.57 & 2.03 & 0 & 0.74 & 1.25 & 0 & 0 & 0 & 0 \\
nug20 & 4.90 & 2.49 & 4.75 & 0 & 0 & 0.54 & 0 & 0 & 0 & 0 \\
nug30 & 6.30 & 1.53 & 5.98 & 0 & 0 & 1.01 & 0 & 0 & 0 & 0 \\
sko42 & 4.45 & 2.57 & 3.45 & 0 & 0 & 1.39 & 0 & 0 & 0 & 0 \\
sko49 & 4.06 & 1.98 & 4.20 & 0 & 0.13 & 1.17 & 0 & 0 & 0.01 & 0.06 \\
tai12a & 2.08 & 0 & 2.08 & 0 & 0 & 0 & 0 & 0 & 0 & 0 \\
tai25a & 4.07 & 5.77 & 4.07 & 0.89 & 0 & 1.91 & 0 & 0 & 0 & 0.41 \\
tai40a & 4.28 & 6.79 & 4.28 & 0.81 & 1.34 & 2.16 & 0.33 & 0.12 & 0.33 & 0.84 \\
tai50a & 4.37 & 7.16 & 4.37 & 0.94 & 1.18 & 2.91 & 0.75 & 0.50 & 0.72 & 1.34 \\
tho30 & 3.36 & 1.41 & 3.36 & 0 & 0 & 1.39 & 0 & 0 & 0 & 0 \\
tho40 & 6.02 & 3.06 & 6.02 & 0 & 0.05 & 2.19 & 0 & 0 & 0 & 0 \\
wil50 & 1.69 & 2.20 & 1.10 & 0 & 0.02 & 0.62 & 0 & 0 & 0 & 0.02 \\
\bottomrule
\end{tabular}
\caption{Minimum relative optimality gaps observed over $10^5$ iterations using the five heuristics.}
\label{tab:heuristic_gaps}
\end{table}

\subsubsection{Typical Behaviour of the Greedy and Tabu Search Heuristics}

Given a feasible binary solution $x$, 
the \texttt{Greedy} heuristic iteratively selects the neighbour within $\cN(x)$ that yields the maximum immediate reduction in the objective function. While this strategy ensures the best possible improvement at each step within the incumbent full neighbourhood, if often leads to an oscillatory pattern, where the search alternates between two permutation assignments. 
As a result, the algorithm fails to make further progress towards a global optimal solution.
This pattern is evident for five of the selected QAPLIB instances, as shown in \Cref{fig:typical_alternating}.
We make observations for both settings of $\Theta$, that is, $\Delta$ and $\tilde\Delta$.
\begin{figure}[ht]
    \centering
    \includegraphics[width=0.8\linewidth]{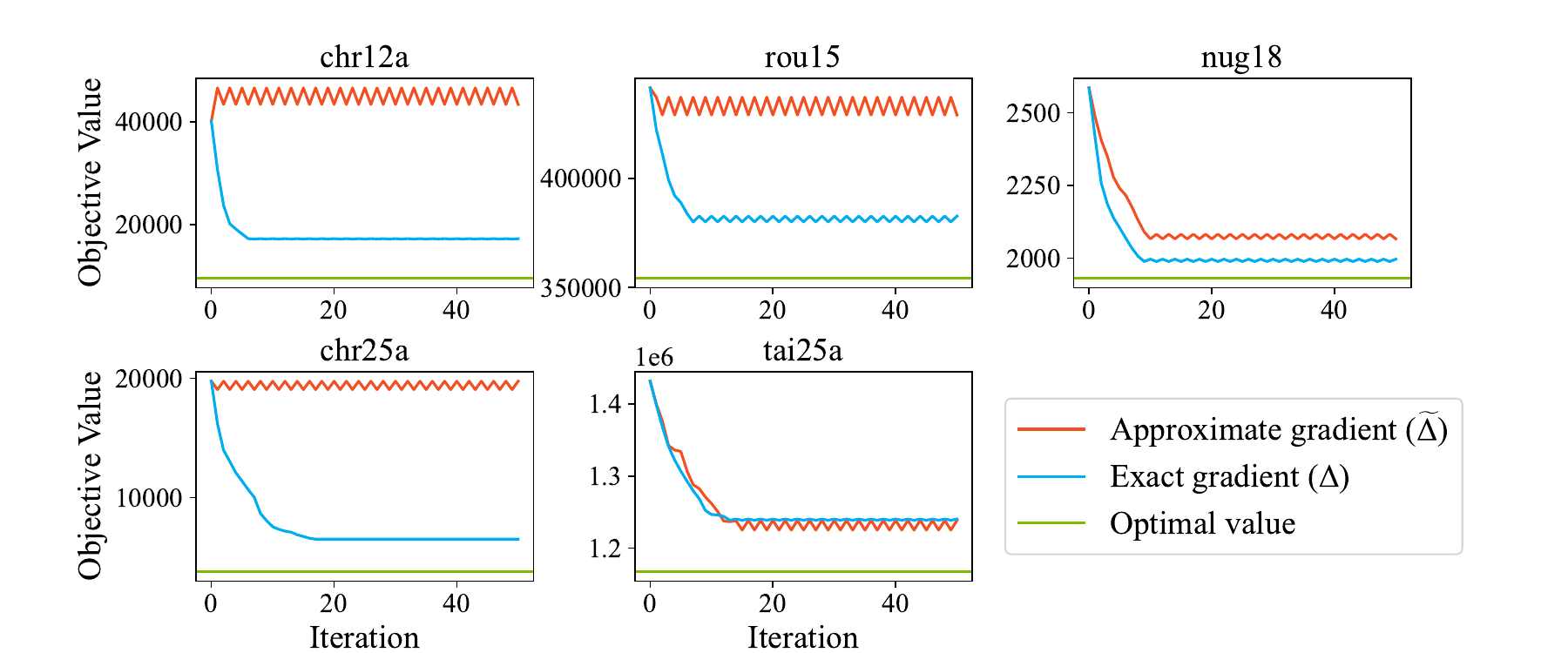}
    \caption{Objective value trajectories of the \texttt{Greedy} heuristic on five QAPLIB instances over 50 iterations of full-neighbourhood evaluations, beginning with the identity permutation $X=I$.}
    \label{fig:typical_alternating}
\end{figure}
From the sixth iteration of the QAPLIB instance ``chr12a'', when the exact gradient is used, 
the \texttt{Greedy} search heuristic alternates between the permutation assignments $[ 2, 6, 0, 1, 4, 5, 7, 11, 3, 9, 8, 10]\in \Pi_{12}$ and $[ 2, 6, 0, 1, 4, 5, 7, 11, 3, 9, 10, 8]\in \Pi_{12}$, yielding the objective values 17,210 and 17,278, respectively.
Similarly, when the approximate gradient is used, the \texttt{Greedy} heuristic alternates between 
$[ 0, 5, 2, 3, 4, 1, 6, 7, 8, 9, 10, 11]\in \Pi_{12}$ and
$[ 1, 5, 2, 3, 4, 0, 6, 7, 8, 9, 10, 11]\in \Pi_{12}$ from the first iteration, 
yielding the objective values $46,646$ and $43,420$, respectively.
Such alternating behaviour is a commonly observed pattern, which indicates that the \texttt{Greedy} search heuristic cannot escape local minima. 

Given a feasible binary solution $x$ and its gradient $\Theta$ evaluated at $x$, 
the \texttt{Tabu} heuristic begins by sorting the elements of $\Theta$. It then iteratively checks the condition $\Theta_i < f_\text{best}- \langle x,Qx\rangle$, where $\Theta_i$ corresponds to the $i$-th neighbour in $\cN(x)$. 
If this condition is satisfied, then the heuristic accepts that neighbour and proceeds to the next iteration. 
If $\Theta=\Delta$, then the condition reflects that the correct decision has been made, as $\Delta_i$ represents the true difference in the objective values. 
However, when an approximate gradient $\Theta=\tilde\Delta$ is used, discrepancies can occur due to gradient estimation errors. Specifically, if the error term is omitted from 
$\tilde\Delta_i + \mathcal{E}_i <  f_\text{best}- \langle x,Qx\rangle$, the comparison may lead to the acceptance of an incorrect neighbour. In other words, decisions based solely on $\tilde\Delta_i$ may be unreliable in the presence of approximation errors. 
This behaviour is illustrated in~\Cref{fig:typical_alternating_tabu}.
When the exact gradient is used, the \texttt{Tabu} heuristic avoids cycling between configurations. In contrast, with the approximate gradient, its behaviour resembles that of the \texttt{Greedy} heuristic for the five instances shown in the figure, alternating between suboptimal configurations.
\begin{figure}[ht]
    \centering
    \includegraphics[width=0.8\linewidth]{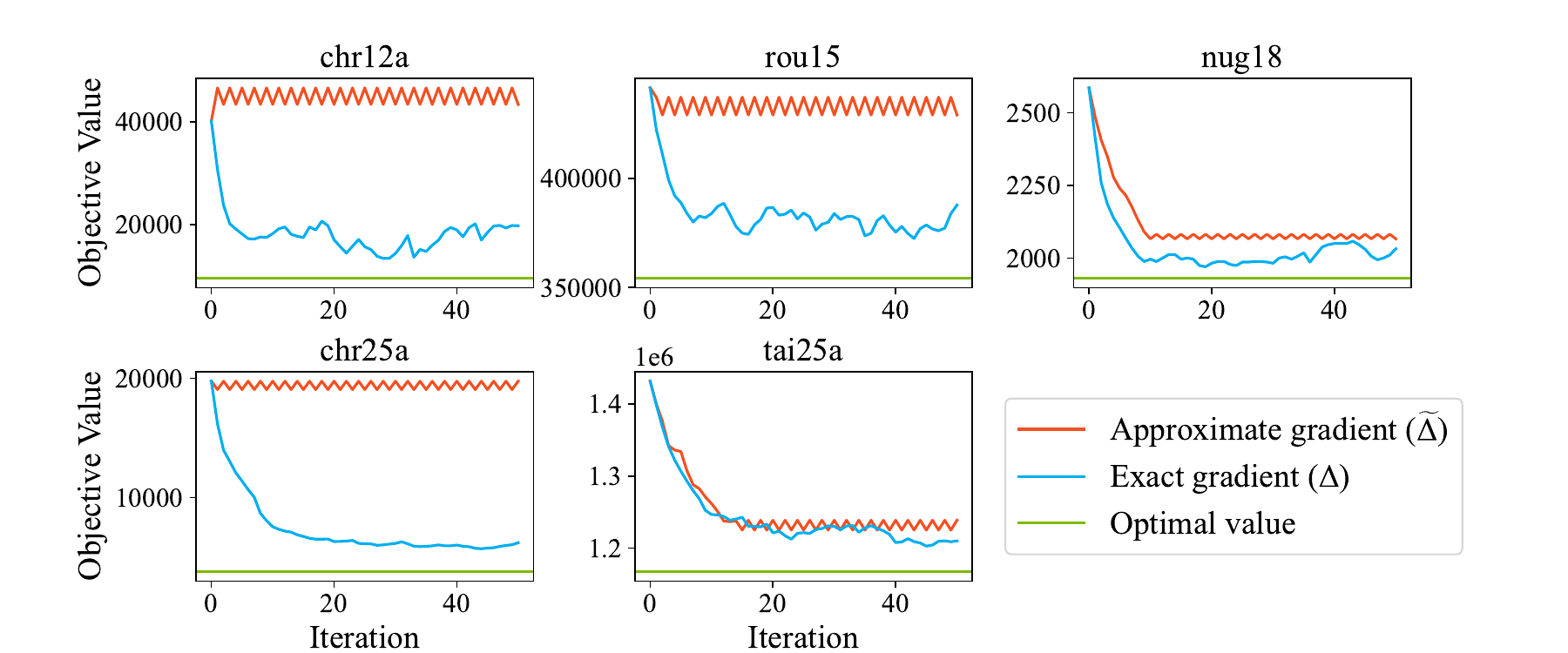}
    \caption{Objective value trajectories of the \texttt{Tabu} heuristic for five QAPLIB instances over 50 iterations of full-neighbourhood evaluations, beginning with the identity permutation $X=I$.}
    \label{fig:typical_alternating_tabu}
\end{figure}
Consequently, we do not consider the \texttt{Greedy} and \texttt{Tabu} heuristics  for further analysis.

\subsubsection{Analysis of the Top10 Heuristic}

We now describe the behaviour of the \texttt{Top10} heuristic.
\Cref{fig:rangeOptgap}(a) shows a comparison of the relative optimality gaps using the approximate gradient $\tilde\Delta$  and exact gradient  $\Delta$ observed over $10^5$ iterations, that is, 
$\left\{ {\relgap}^t(10^5) : t \in \mathcal{T} \right\}$.
\Cref{fig:rangeOptgap}(b) shows the best observed optimality gaps, $\relgapMin(10^4)$ and $\relgapMin(10^5)$, for both $\tilde\Delta$ and $\Delta$.
\begin{figure}[ht]
    \centering 
    \includegraphics[width=0.95\linewidth]{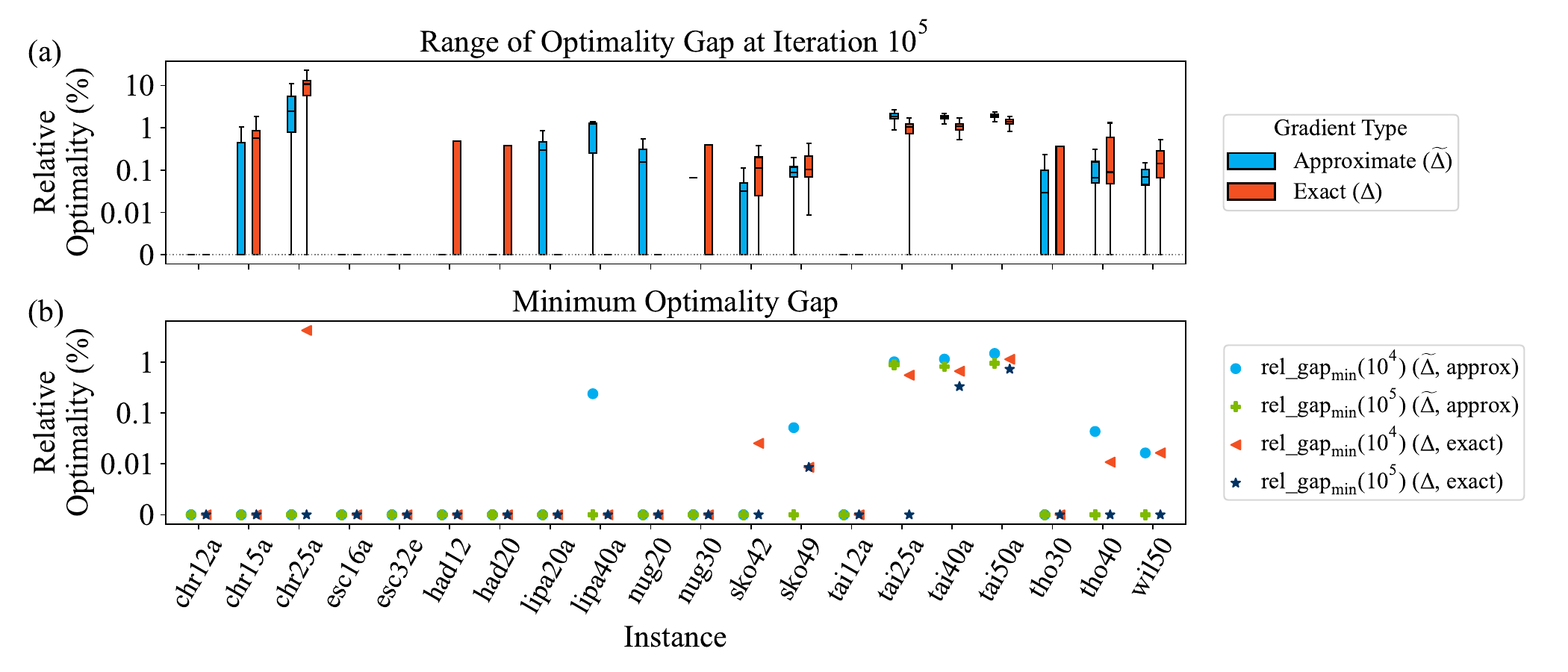}
    \caption{(a) Range of relative optimality gaps between approximate and exact gradient computations across $100$ trials with $10^5$ total iterations. (b) Best optimality gaps for approximate and exact gradient computations across $10^4$ and $10^5$ total iterations.}
    \label{fig:rangeOptgap}
\end{figure}
Both approaches yield optimality gaps that are smaller than $1\%$, demonstrating their effectiveness in reaching optimal or near-optimal solutions.
This observation supports the use of the approximate gradient as a viable alternative to the exact gradient.

\Cref{fig:which_one_wins} summarizes the number of times the approximate gradient method outperforms the exact gradient method, and vice versa, across 100 independent trials for each instance. 
\begin{figure}[ht]
    \centering
    \includegraphics[width=0.8\linewidth]{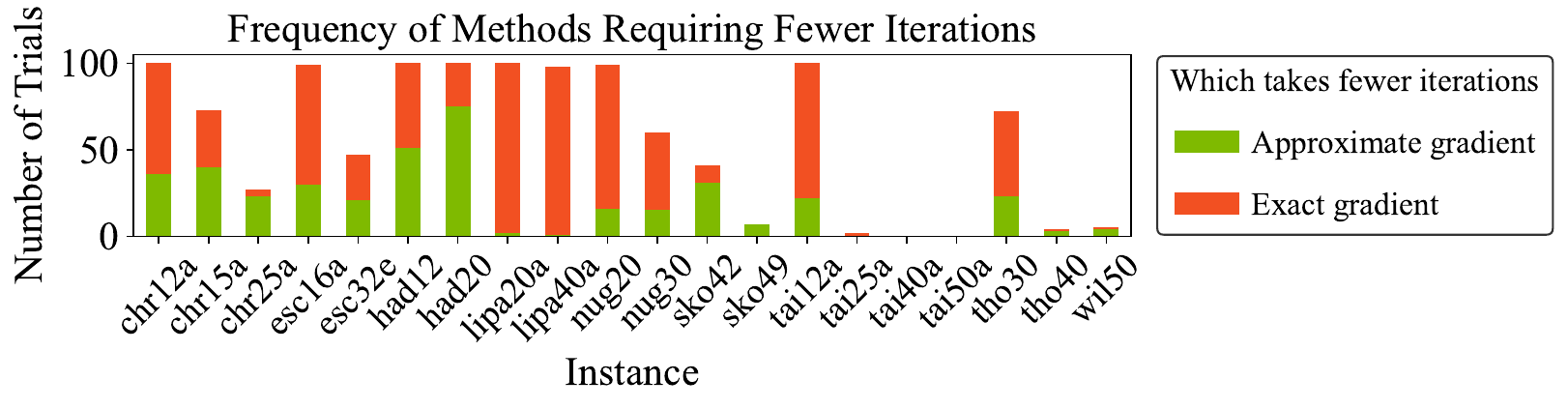}
    \caption{Proportion of 100 trials in which the approximate or exact gradient method reaches the optimal value in fewer iterations when the \texttt{Top10} heuristic is used.}
    \label{fig:which_one_wins}
\end{figure}
We record how often each method reaches the optimal value. The relative performance varies by instance, indicating that no single method is universally superior. The results highlight that using the approximate gradient can be  effective, and, in some cases, competitive with the exact method.

However, when comparing the relative runtime of computing the exact versus approximate gradient methods, the advantage of using the approximate gradient becomes evident. 
\Cref{fig:speedupApproxExact} shows the relative runtime, computed as the ratio of the time taken using 
$\Delta$ divided by the time taken using $\tilde\Delta$ for each instance, with the instances sorted by size.
The plot shows that using the exact gradient method requires significantly more computational resources and takes considerably longer than the approximate gradient method. 
Notably, the relative time increase becomes more pronounced as the instance size increases. 
\begin{figure}[ht]
    \centering
    \includegraphics[width=0.8\linewidth]{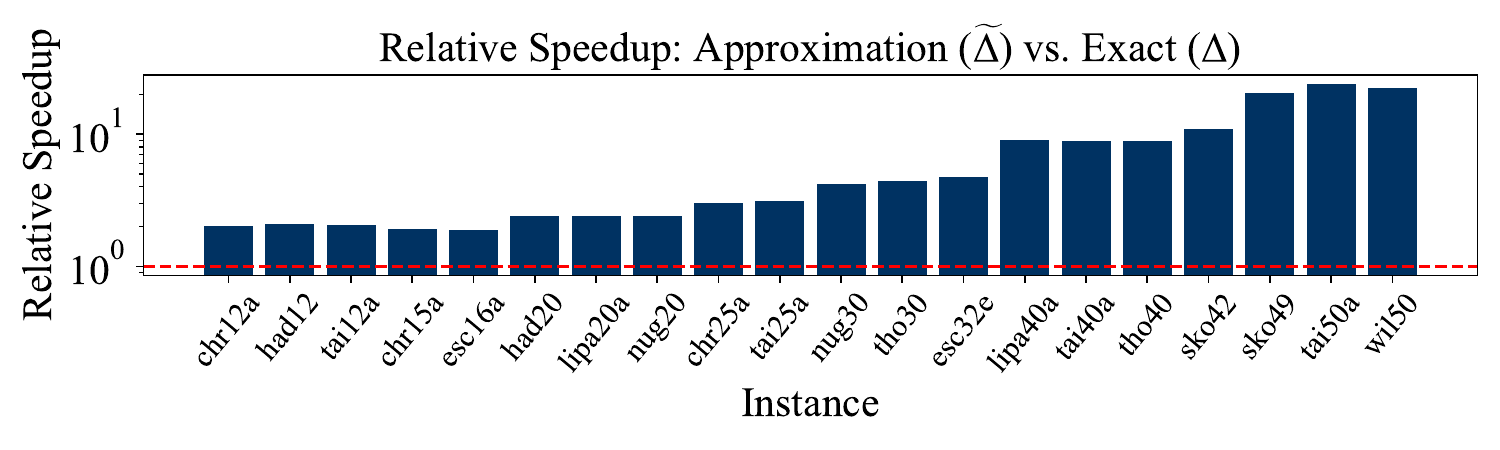} \vspace{-0.5cm}
    \caption{Ratio of the average runtime using the exact gradient method versus the approximate gradient method across $100$ trials.}
    \label{fig:speedupApproxExact}
\end{figure}

\subsubsection{Assessment of Omission of the Error Correction Terms}

We assess the impact of omitting the error terms $\mathcal{E}_i$ in the gradient computation as the problem size $n$ increases.
For each $i$-th neighbour in $\cN(x)$ at a given feasible binary solution $x$,
recall that each $\Delta_i$ involves $4n$ elements of $Q$, whereas $\mathcal{E}_i$ is determined by two elements of $Q$ regardless of the problem size.
To ensure a comprehensive evaluation, three classes from the QAPLIB that encompass a wide range of problem sizes are selected. 
The magnitude of the error terms at a given point is quantified using the following relative error metric.
Given a $i$-th neighbour $(z_1,z_2,z_3,z_4)$ in $\cN(x)$, we first decompose $\tilde\Delta=S_{34}g$ into the positive and negative components, that is, $\tilde\Delta = \tilde\Delta_i^+ - \tilde\Delta_i^-$, where 
$\tilde\Delta_i^+ = S_{34}((Qx)\circ (\mathbf{1}-x)$ and 
 $\tilde\Delta_i^- =  S_{34} \left( K  ( P_{\supp(x)} ((Qx )\circ x)) \right)$. 
We then measure the relative error:
\begin{equation}
    \label{eq:relErrorNum}
\text{rel\_error} :=
\frac{1}{\binom{n}{2}} \sum_{i=0}^{\binom{n}{2}-1} \frac{ \mathcal{E}_i  }{ \max\{ 1,\tilde\Delta_i^+  \} } .
\end{equation}
\Cref{fig:errorCorrectorNorm} shows rel\_error over the first $1000$ iterations for instances formulated by Skorin--Kapov (``sko''), Li and Pardalos (``lipa''), and Taillard (``tai''). 
We observe a consistent trend across all three classes, indicating that the contribution of $\mathcal{E}_i$ relative to $\Delta_i$ diminishes as the problem size increases, hence, the impact of omitting the error terms becomes negligible in large-scale instances. 

\begin{figure}[ht]
    \centering
    \includegraphics[width=1\linewidth]{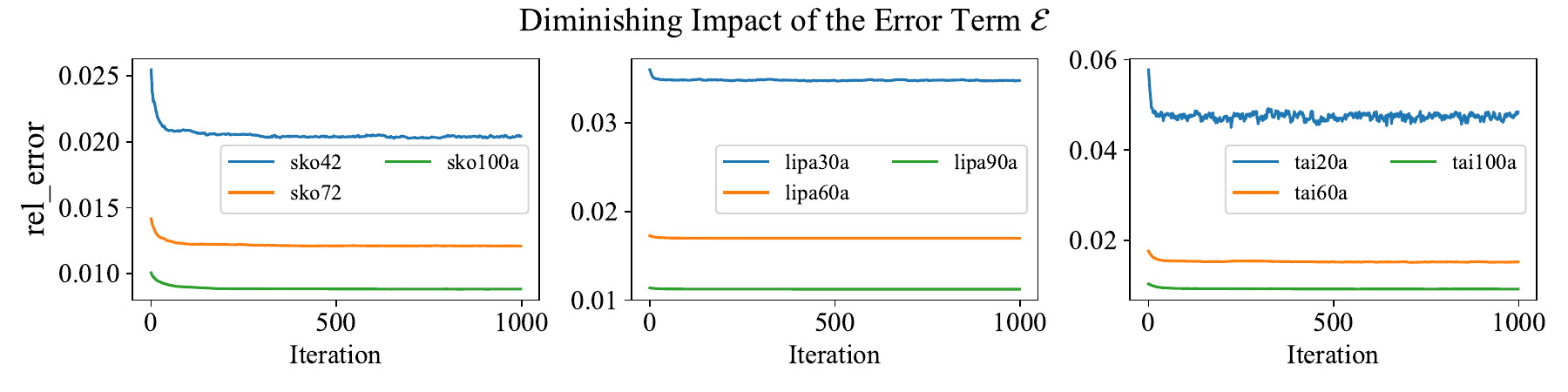}
     \vspace{-0.7cm}
    \caption{Relative errors, rel\_error, defined by the equality~\eqref{eq:relErrorNum},  observed from sko, lipa, and tai instance classes from the QAPLIB for the first $1000$ iterations of \Cref{algo:FN_All_routine}.}
    \label{fig:errorCorrectorNorm}
\end{figure}

\subsection{Performance Results for Large-Scale Instances Using Approximate Gradient Method}

We provide a comprehensive set of additional results where the approximate gradient is used for the \texttt{Top10}, \texttt{WalkQAP}, and 
\texttt{SA} heuristics on selected QAPLIB instances. The parameters described in \Cref{sec:Heuristics} are applied to these heuristics. 
An Intel Xeon E7-8890 v3 CPU running at 2.50 GHz with 144 cores was used for the experiment.
\Cref{fig:optgapThreesolvers} presents the relative optimality gaps $\relgapMin (10^5)$
for selected instances from the QAPLIB library.
The results demonstrate that the \texttt{Top10} and \texttt{WalkQAP} heuristics consistently outperform the \texttt{SA} heuristic, yielding significantly smaller gaps across most instances.
\begin{figure}[ht]
    \centering
    \includegraphics[width=0.99\linewidth]{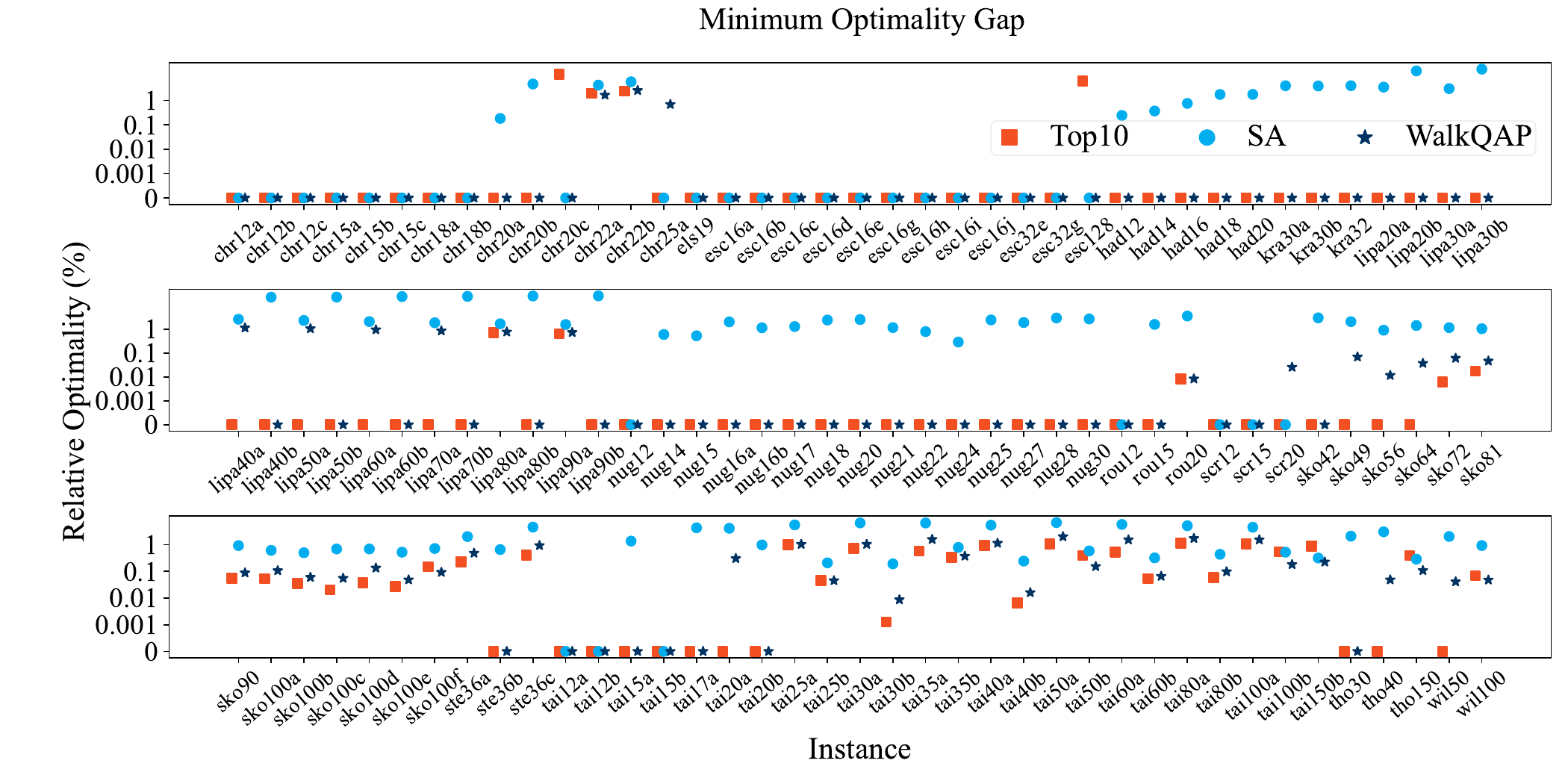} 
    \vspace{-0.3cm}
    \caption{Best optimality gaps observed across three heuristics: \texttt{Top10}; \texttt{SA}; and \texttt{WalkQAP}. The approximate gradient $\tilde{\Delta}$ is used for all heuristics. Optimality gaps that lie outside of the range [0$\%$, 5$\%$] have been omitted.}
    \label{fig:optgapThreesolvers}
\end{figure}

We now present the relative optimality gaps for instances not included in the QAPLIB library, which have a size of $n=27,45,75,125,175,$ and $343$.
These instances were kindly shared by one of the authors of Ref.~\cite{WANG2023109220}; as mentioned in Ref.~\cite{Mihic2018RDSolver},
these instances have been constructed such that they contain local optima that are relatively far apart and the difficulty increases with problem size.
We employ the approximate gradient method for each of the \texttt{Top30},  \texttt{WalkQAP}, and  \texttt{SA} heuristics.
The \texttt{Top30} heuristic corresponds to Eq.~\eqref{eq:top10Heu}, where
the members of a full neighbourhood are sorted and the best 30 are selected.  The \texttt{WalkQAP} heuristic corresponds to Eq.~\eqref{eq:walkSATHeu}, where the top 30 members are also selected, that is, \texttt{Top30} is used instead of \texttt{Top10}. The parameters described in \Cref{sec:Heuristics} are used for the \texttt{SA} heuristic. 
Relative gaps are computed using the definition \eqref {eq:relgap_formula} and the best known objective values, $f^*$, from Refs.~\cite{WANG2023109220, Mihic2018RDSolver}.
\Cref{tab:taillard_class_e} summarizes the relative optimality gaps observed across 100 trials obtained using these heuristics. An Intel Xeon 6767P CPU running at 3.90 GHz with 256 logical cores was used to generate the results.

\begin{table}[ht]
\centering
\begin{tabular}{lrrr}
\toprule
Instance & \texttt{Top30} & \texttt{WalkQAP} & \texttt{SA} \\
\midrule
tai27e01 & 0.08 & 7.74 & 0.00 \\
tai45e01 & 22.46 & 9.11 & 1.81 \\
tai75e01 & 26.37 & 9.43 & 2.79 \\
tai125e01 & 37.15 & 3.18 & 7.40 \\
tai175e01 & 25.77 & 23.04 & 9.64 \\
tai343e01 & 28.16 & 20.47 & 15.42 \\
\bottomrule
\end{tabular}
\caption{Minimum relative optimality gaps for instances from Refs.~\cite{WANG2023109220, Mihic2018RDSolver} observed over $10^5$ iterations using the \texttt{Top30}, \texttt{WalkQAP}, and \texttt{SA} heuristics with approximate gradients.}  
\label{tab:taillard_class_e}
\end{table}
We observe, from \Cref{tab:taillard_class_e}, that simple heuristics such as \texttt{Top30} and \texttt{WalkQAP}, that are well-suited for performing an intense local search, do not attain competitive objective values compared to the best objective values reported in Refs.~\cite{WANG2023109220, Mihic2018RDSolver}. 
\Cref{fig:tiallard_evoluation} illustrates the evolution of the best found objective values across 10 independent trials performed using the ``tai125e01'' instance. 
The \texttt{Top30} heuristic often does not effectively escape local minima, as it is unable to improve after a certain number of iterations. 
\begin{figure}[ht]
    \centering
    \includegraphics[width=0.95\linewidth]{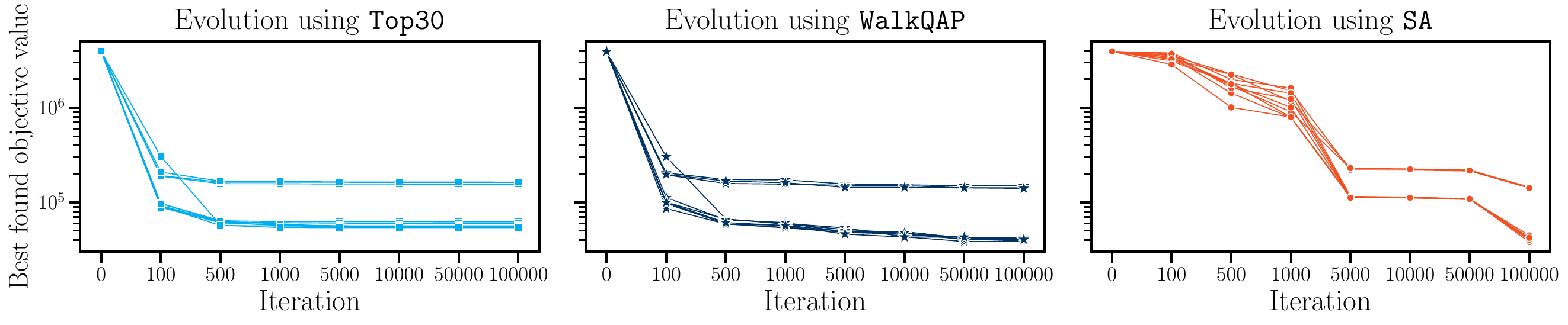}
    \caption{Evolution of the best found objective values for 10 trials of the ``tai125e01'' instance using the \texttt{Top30}, \texttt{WalkQAP}, and \texttt{SA} heuristics, respectively. For each trial, the best found objective values are recorded from a set of eight intervals; the markers at each coordinate $(i,f)$ represent the best found objective value~$f$ recorded from
    the intervals ranging from iteration 0 through iteration \mbox{$i \in \{0, 100, 500, 1000, 5000, $ 10,000, 50,000, 100,000$\}$,} where
$f=f_\text{best} (i)$ in definition
\eqref{eq:fbest_notation}.
}
    \label{fig:tiallard_evoluation}
\end{figure}
The \texttt{WalkQAP} heuristic performs better than the \texttt{Top30} heuristic due to its ability to probabilistically select a random member from a full neighourhood, thereby helping to escape local minima. The \texttt{SA} heuristic performs better than both the \texttt{Top30} and \texttt{WalkQAP} heuristics, which is consistent with its known effectiveness in escaping local minima. 
From these additional experiments, we attribute the observed performance gap between our heuristics  and state-of-the-art CMOS-based results to the intentionally simple design of our heuristics versus the more-sophisticated heuristics presented in Refs.~\cite{WANG2023109220, Mihic2018RDSolver}. Importantly, the main focus of the paper is our proposal of a hardware-friendly approach suitable for analog IMC hardware accelerators, which significantly increases the speed and efficiency of matrix--vector multiplications and exploits the accelerators' inherent parallelism. The enhanced computational speed and efficiency offered by IMC hardware is where our approach demonstrates clear advantages. We also emphasize that the proposed architecture is not restricted to a specific heuristic; when combined with more-sophisticated heuristics, the accelerated full-neighbourhood evaluation enabled by IMC accelerators is expected to further improve computational performance.

\subsection{Computation Breakdown of the Parallel Search and Its Applications to IMC Hardware}

In this section, we present a computational breakdown of our Python-based CPU implementation of \Cref{algo:FN_All_routine} to identify the computationally intensive routines by varying the problem size from $n=12$ to $n=100$.
Specifically, we compare the computation load distribution of the  subroutines in \Cref{algo:FN_All_routine} relative to the total runtime. 
The instance names are given in each pie chart in \Cref{fig:pieComputation}, followed by the average runtime for $10$ repetitions in terms of the wall-clock time.
The routines for computing $g$ and $\tilde{\Delta}$ (line~\ref{line:algoUpdateS} in \Cref{algo:FN_All_routine}) are labelled
 ``Gradient computation''. 
The update of the full neighbourhood (line~\ref{line:algoFNevalLine} in \Cref{algo:FN_All_routine}) is labelled ``Neighbourhood $\cN$ update''. 
The routine for implementing the \texttt{Top10} heuristic (line~\ref{line:pickHeuristic} in \Cref{algo:FN_All_routine}) is labelled ``Selection of neighbour''
and 
all other routines are collectively labelled ``Other''. The comparison in \Cref{fig:pieComputation} was generated using the Intel Xeon E7-8890 v3 running at 2.50 GHz with 144 cores. The Python profiler named ``cProfile'' was used to provide the performance breakdown.

\begin{figure}[ht]
    \centering
    \includegraphics[trim=0 0 0 0, clip, width=1\textwidth]{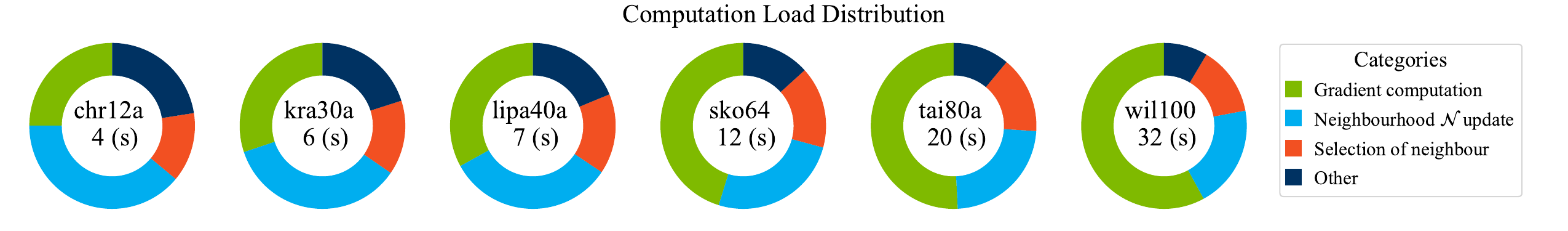}
    \caption{Computation load distribution on six QAPLIB instances. The runtimes over $10^4$ iterations appear below the instance names in the pie charts.}
    \label{fig:pieComputation}
\end{figure}

In \Cref{fig:pieComputation}, we observe a clear trend in which the fraction of the computational time required for calculating gradient increases as the problem size increases.
This increase in time is due primarily to the fact that these operations involve matrix--vector multiplications.
As the problem size increases, the matrix--vector multiplications become more demanding due to the quadratic growth of the binary formulation. 
It is worth noting, however, that the relative cost of the gradient computation can vary depending on the computational resources used.
Given that these steps are computational bottlenecks, IMC hardware presents a promising opportunity for considerable improvements in terms of computation time. 
Unlike conventional processors, which suffer from frequent memory access overhead, IMC architectures perform computations directly within memory.
As shown in \Cref{fig:gainComputationPic}, our approach is well-suited for this paradigm. 
Additionally, the energy efficiency of IMC hardware is a notable by-product, as it reduces the energy overhead of conventional digital computers. 

\section{Conclusions}
\label{sec:conclusion}

To sustain the computational performance growth rate in the post-Moore’s law era, new technologies such as IMC have emerged.  Mapping the native formulation of binary constrained optimization problems onto the unconstrained binary formulation incurs a massive overhead and remains the primary obstacle limiting the performance of special-purpose hardware. Hence, pairing special-purpose hardware with algorithms that solve optimization problems in their original, native formulation has the potential to surpass current state-of-the-art classical algorithms implemented on a CPU.
In this study, we designed an IMC-compatible co-designed framework to tackle the QAP, which is a well-known binary constrained optimization problem. The one-shot gradient evaluation framework for the QAP can be integrated with state-of-the-art local-search heuristic algorithms.  We emphasized the importance of preserving the native embedding of the problem, focusing exclusively on the native feasible space. A key strength of the proposed methodology is its ability to evaluate all gradients within a single iteration of the algorithm, specifically enabled by exploiting the parallelism afforded by IMC hardware. 
Moreover, we provided a blueprint for implementing application-driven analog hardware specifically designed for the full-neighbourhood local-search heuristic for the QAP. 
Another advantage of the approach is its flexibility, allowing it to seamlessly incorporate various heuristics available in the literature without requiring significant modifications. 
Our experimental results serve as a functional validation of the framework's logic on digital hardware, having demonstrated competitive performance and achieved close proximity to optimality, even with approximate gradient evaluation.
We showed that using the approximate gradient allows for a fast implementation, without sacrificing the quality of the results, as measured through the optimality gap. Furthermore, we highlighted the substantial speed-up potential offered by the use of IMC technology. Our results revealed an opportunity for acceleration when leveraging IMC technology, presenting a compelling case for its use across a wide range of industrially relevant problems.

\section*{Acknowledgements}
We thank our editor, Marko Bucyk, for his careful review and editing of the manuscript. This material is based upon work supported by the Defense Advanced Research Projects Agency (DARPA) through Air Force Research Laboratory Agreement No. FA8650-23-3-7313. The views, opinions, and/or findings expressed are those of the author(s) and should not be interpreted as representing the official views or policies of the Department of Defense or the U.S. Government.

\newpage

\appendix

\section{Supplementary Proof}
\begin{prop}
\label{prop:nativeDiff}
Let $x$ be a feasible solution in the binary space and 
let $y=x -e_{z_1}-e_{z_2}+e_{z_3}+e_{z_4}$ be a feasible solution  
formed by the quadruple bit-flip on $x$.
Then, the difference in the objective function value between $x$ and $y$ is 
\[
\begin{array}{ccl}
y^TQy - x^TQx &=& \left[ Q(z_3, : )  + Q(z_4, : )  \right] y + \left[ - Q(z_1, : ) - Q(z_2 , : )  \right] x \\
&&  + \left[ Q(:,z_3 )  + Q(:,z_4 ) \right]^T y + \left[ - Q(:,z_1 ) - Q(:,z_2 )  \right]^Tx \\
&& +~Q(z_1,:)\phi_x + Q(z_2,:) \phi_x - Q(z_3,:)\phi_y -Q(z_4,:)\phi_y .
\end{array}
\]
\end{prop}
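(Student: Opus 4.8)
The plan is to treat this identity as a purely bilinear bookkeeping computation, expanding $y^TQy$ with respect to the decomposition $y = x - \phi_x + \phi_y$, where $\phi_x := e_{z_1}+e_{z_2}$ and $\phi_y := e_{z_3}+e_{z_4}$ as in \eqref{eq:diffphi_xy}. This decomposition is exactly the relation $y = x - e_{z_1}-e_{z_2}+e_{z_3}+e_{z_4}$ assumed in the hypothesis, so no further appeal to feasibility or to the combinatorial structure of the quadruple bit-flip of \Cref{algo:quadBitflip} is required; the only property of the indices actually used is that they are distinct, so that the $e_{z_j}$ and hence $\phi_x,\phi_y$ are well defined.

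First I would expand $y^TQy = (x-\phi_x+\phi_y)^TQ(x-\phi_x+\phi_y)$ into its nine bilinear terms and cancel the $x^TQx$ term against the left-hand side, leaving the eight-term sum
\[
y^TQy-x^TQx = -x^TQ\phi_x + x^TQ\phi_y - \phi_x^TQx + \phi_x^TQ\phi_x - \phi_x^TQ\phi_y + \phi_y^TQx - \phi_y^TQ\phi_x + \phi_y^TQ\phi_y .
\]
Next I would rewrite the claimed right-hand side in the same vector notation, using $Q(:,z_j)=Qe_{z_j}$ together with scalar transposition to identify
\[
[Q(z_3,:)+Q(z_4,:)]y = \phi_y^TQy, \qquad [Q(:,z_3)+Q(:,z_4)]^Ty = y^TQ\phi_y ,
\]
and likewise $Q(z_1,:)\phi_x+Q(z_2,:)\phi_x = \phi_x^TQ\phi_x$, $-Q(z_1,:)x-Q(z_2,:)x = -\phi_x^TQx$, $-[Q(:,z_1)+Q(:,z_2)]^Tx = -x^TQ\phi_x$, and $-Q(z_3,:)\phi_y-Q(z_4,:)\phi_y = -\phi_y^TQ\phi_y$. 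The stated right-hand side thus equals $\phi_y^TQy - \phi_x^TQx + y^TQ\phi_y - x^TQ\phi_x + \phi_x^TQ\phi_x - \phi_y^TQ\phi_y$. Substituting $y = x-\phi_x+\phi_y$ into the two $y$-dependent terms, $\phi_y^TQy = \phi_y^TQx - \phi_y^TQ\phi_x + \phi_y^TQ\phi_y$ and $y^TQ\phi_y = x^TQ\phi_y - \phi_x^TQ\phi_y + \phi_y^TQ\phi_y$, and collecting, I would verify that the result reproduces exactly the eight-term sum above, completing the proof.

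The only genuine obstacle is sign and multiplicity bookkeeping. In particular, the term $\phi_y^TQ\phi_y$ appears once in each of $\phi_y^TQy$ and $y^TQ\phi_y$, so it must combine with the explicit $-\phi_y^TQ\phi_y$ in the statement to net coefficient $+1$; this is precisely why the last three terms $Q(z_1,:)\phi_x+Q(z_2,:)\phi_x-Q(z_3,:)\phi_y-Q(z_4,:)\phi_y$ are the correction needed to pass from the ``half-expanded'' $y$-dependent form to the fully expanded bilinear form. One must also take care \emph{not} to symmetrize $Q$ here: at this stage $Q=D\otimes F$ need not be symmetric, so $\phi_y^TQ\phi_x$ and $\phi_x^TQ\phi_y$ are kept distinct, and $[Q(z_3,:)+Q(z_4,:)]y$ and $[Q(:,z_3)+Q(:,z_4)]^Ty$ remain genuinely separate contributions. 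Once the two eight-term sums are matched term by term, the identity follows.
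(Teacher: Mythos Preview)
Your proposal is correct and the bookkeeping checks out term by term, including the $+1$ net coefficient on $\phi_y^TQ\phi_y$. However, it does not follow the paper's route. You work directly with the substitution $y=x-\phi_x+\phi_y$, expand the bilinear form into nine pieces, and then match against the claimed right-hand side after expanding its $y$-dependent terms. The paper instead introduces the common support $\cI=(\supp(x)\cup\supp(y))\setminus\{z_1,z_2,z_3,z_4\}$, writes $x=\phi_x+e_\cI$ and $y=\phi_y+e_\cI$ as two \emph{parallel} decompositions, expands each quadratic form separately, cancels the shared $e_\cI^TQe_\cI$, and then adds two vanishing ``zero blocks'' $O_x,O_y$ to regroup $e_\cI$ back into $x$ and $y$. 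Your argument is shorter and more transparent---it never needs the auxiliary index set or the artificial zero terms---while the paper's version makes the role of the unchanged coordinates explicit, which ties in more visibly with the later neighbourhood-update machinery in \Cref{sec:PartialFNUp}. Both are valid; yours is the more economical proof of this particular identity.
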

\begin{proof}
Let $x$ be a feasible solution in the binary space, and let $y$ be a feasible solution after a local update, that is,  $y=x -e_{z_1}-e_{z_2}+e_{z_3}+e_{z_4}$.
Let $\phi_x$ and $\phi_y$ be as defined in the equalities \eqref{eq:diffphi_xy}.
We decompose the indices as follows:
\[
\cI := (\supp(x) \cup \supp(y) ) \setminus  \{z_1,z_2,z_3,z_4\}.
\]
Let $e_\cI= \sum_{i \in \cI} e_{i}$.
Then, 
\[
x =\phi_x  + e_{\cI} =e_{z_1}+e_{z_2} + e_{\cI}  \ \text{ and } \
y = \phi_y  + e_{\cI}=e_{z_3}+e_{z_4} + e_{\cI}.
\]
We then have
\[
\begin{array}{ccl}
y^TQy - x^TQx
&=& 
y^T Q \left( e_{z_3}+e_{z_4} + e_{\cI} \right) 
- x^T Q \left(  e_{z_1}+e_{z_2} + e_{\cI}  \right)
\\
&=& 
y^T Q e_{z_3}+ y^T Q e_{z_4} + y^T Q e_{\cI} 
- x^T Q e_{z_1} - x^TQ  e_{z_2} -x^TQ  e_{\cI} 
\\
&=& 
y^T Q e_{z_3}+ y^T Q e_{z_4} + e_{z_3}^T Q e_{\cI} + e_{z_4}^T Q e_{\cI}  + e_{\cI}^T Q e_{\cI} \\
&&
-~x^T Q e_{z_1} - x^TQ  e_{z_2} - e_{z_1}^TQ  e_{\cI} - e_{z_2}^TQ  e_{\cI} - e_{\cI}^TQ  e_{\cI} .
\end{array}
\]
After cancelling out the term $e_{\cI}^TQ  e_{\cI} $, we may write
$y^TQy - x^TQx  = D_y - D_x$, where
\[
\begin{array}{rcl}
   D_y  &:=&y^T Q e_{z_3}+ y^T Q e_{z_4} + e_{z_3}^T Q e_{\cI} + e_{z_4}^T Q e_{\cI} , \text{ and } \\
   D_x  &:=&  x^T Q e_{z_1} + x^TQ  e_{z_2} + e_{z_1}^TQ  e_{\cI} + e_{z_2}^TQ  e_{\cI}.
\end{array}
\]
Let 
\[
\begin{array}{rcl}
O_y &:=& 
e_{z_3}^T Q \phi_y- e_{z_3}^T Q \phi_y
+e_{z_4}^T Q\phi_y - e_{z_4}^T Q\phi_y , \text{ and }\\
O_x &:=& e_{z_1}^T Q \phi_x 
- e_{z_1}^T Q \phi_x 
+ e_{z_2}^T Q \phi_x
- e_{z_2}^T Q \phi_x .
\end{array}
\]
Note that $O_x=O_y=0$.
Expanding $D_y$ and $D_x$ using $O_y$ and $O_x$,  we obtain the expressions
\[
\begin{array}{rcl}
 D_y &=& D_y + O_y \\
     &=& y^T Q e_{z_3} + y^T Q e_{z_4}  + e_{z_3}^T Q y  + e_{z_4}^T   Q y 
- e_{z_3}^T Q\phi_y
- e_{z_4}^T Q\phi_y,
\end{array}
\]
and
\[
\begin{array}{rcl}
 D_x &=& D_x + O_x \\
     &=& x^T Q e_{z_1} +  x^T Q e_{z_2} + e_{z_1}^TQ x  + e_{z_2}^T  Q x
- e_{z_1}^T Q \phi_x 
- e_{z_2}^T Q \phi_x  .
\end{array}
\]
Thus, $y^TQy - x^TQx=D_y-D_x$ results in the equality in the statement.  
\end{proof}

\section{A Pathway to Hardware Implementation}
\label{sec:IMCimplementation}

While there are well-known closed-form analytical expressions for determining the change in the objective value due to performing local moves without needing to recalculate the full objective value~\cite{Taillard91}, the evaluation of a full neighbourhood of possible moves typically requires sequential evaluations of each neighbouring solution. In contrast, our method evaluates the entire neighbourhood of all possible 4-opt local moves simultaneously, performed through matrix--vector multiplications, which are a natural computational primitive in this algorithm.
Our approach is motivated by several studies that show significant gains in efficiency and speed in performing tasks involving matrix--vector multiplications using IMC hardware~\cite{10281389, 8811809, Bao2022}. 
\Cref{fig:IMCarchi} illustrates this connection where IMC hardware can support the full neighbourhood local search algorithm, \Cref{algo:FN_All_routine}, we propose.

\begin{figure}[htbp]
  \centering
  \begin{minipage}[t]{0.2\textwidth}
    \centering
    \includegraphics[height=3.5cm]{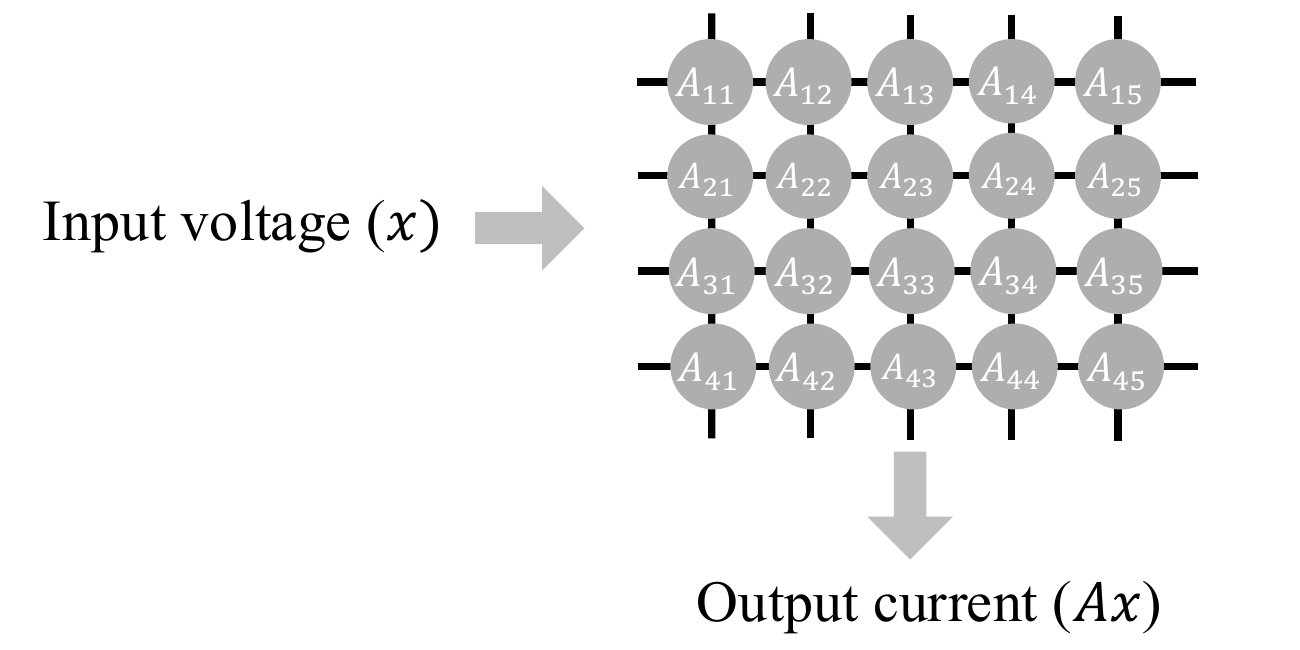}
    \caption*{\makebox[0.4\textwidth][l]{\hspace{1cm}(a) Analog crossbar array}}
\end{minipage}%
  \hfill
  \begin{minipage}[t]{0.6\textwidth}
    \centering
    \includegraphics[height=4cm]{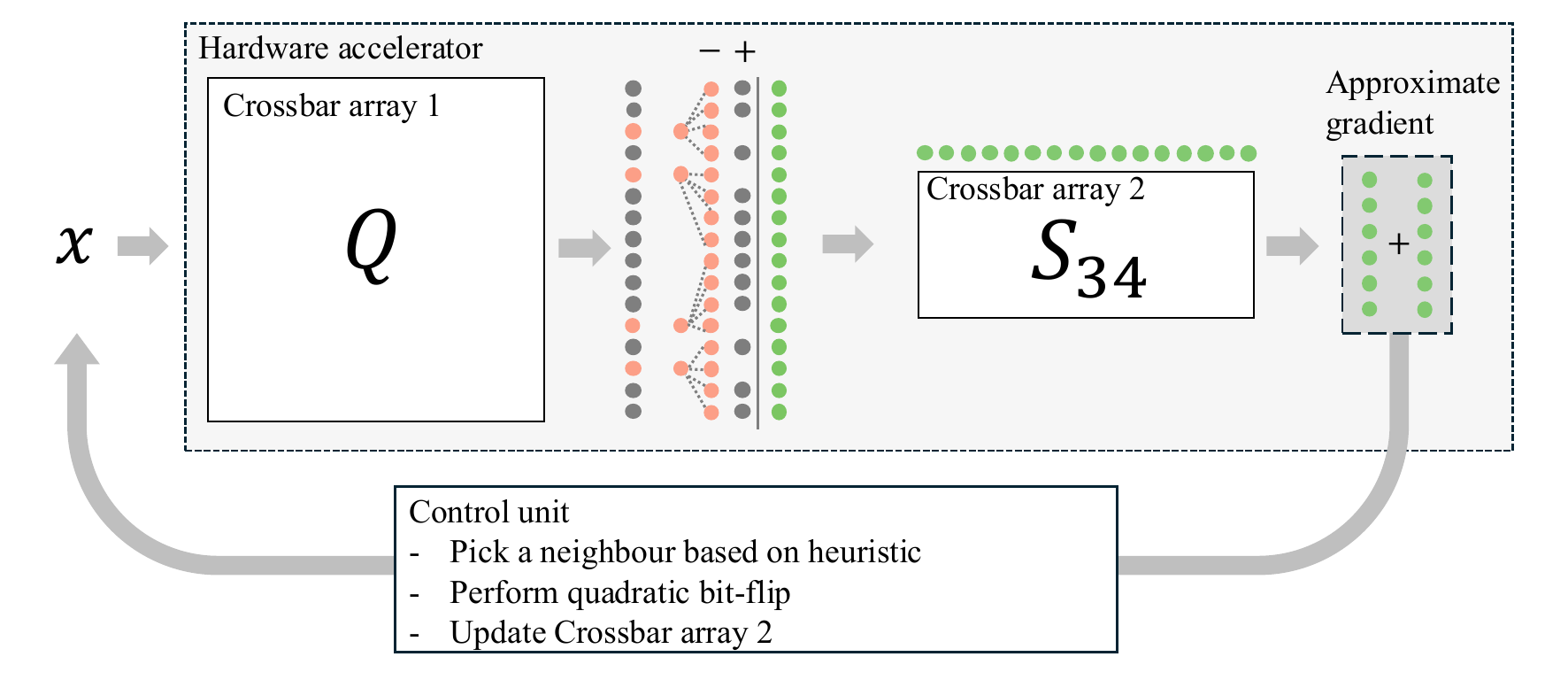}
    \caption*{(b) IMC architecture}
  \end{minipage}

  \caption{(a) Diagram of an IMC crossbar array, where each memristor’s conductance encodes an element of a matrix used for  matrix--vector multiplications. (b) High-level overview of the circuit implementation of \Cref{algo:FN_All_routine}.}
  \label{fig:IMCarchi}
\end{figure}

Our approach relies on two key hardware components that both involve matrix--vector multiplications: one IMC crossbar array used for summing the rows (or columns) of a matrix; and a second IMC crossbar array used for grouping the resulting vector in order to perform an approximate gradient computation. Performing a full evaluation of the QAP objective function using a matrix--vector multiplication requires $O(n^4)$ floating-point operations, where $n$ is the QAP problem size; however, by leveraging the capability of IMC hardware to perform operations in parallel, matrix--vector multiplication can be executed with a $O(1)$ time complexity, and with great energy efficiency, provided that a problem is able to fit within the capacity of hardware accelerators.
However, realizing an architecture that uses such hardware accelerators presents challenges in terms of supporting diverse functionalities and complex algorithms. Below, we list them, along with potential mitigation methods.

Currently, state-of-the-art single-tile crossbar arrays are small, and thus  a large-scale problem is unable to fit in a single array. The QUBO formulation of the QAP problem requires matrices $n^2$-by-$n^2$ in size, necessitating either crossbars with large arrays or a distributed architecture in order to be able to handle large-scale instances. Crossbar arrays up to 128-by-1024 in size~\cite{10067380} have been reported within the SAT community, and ones up to 512-by-2048 in size have been reported elsewhere~\cite{AmbrogioS_2023}, neither of which are sufficient for programming large-scale QAP instances (e.g., where $n>50$).   Several studies have shown how to use an architecture of relatively smaller crossbar arrays to facilitate larger matrix--vector multiplications~\cite{10.1145/3007787.3001139, 10.1145/3665314.3670851}. Another challenge is that analog IMC devices are more limited in bit  precision than CMOS-based devices. Bit precision in excess of 11 bits has been demonstrated~\cite{rao2023thousands}; however, as noted in Ref.~\cite{Bao2022}, the bit precision of IMC hardware accelerators is often closer to 6 bits. In the QAPLIB benchmarking set of instances, we observe that the dynamic ranges of the flow and distance matrices vary widely.
For example, for the ``tai12a'' and ``tai100b'' instances, the pairs representing the largest elements of matrices $D$ and $F$ are $(96,99)$ and $(1834, 8095)$, respectively. Taking the Kronecker product of these two matrices further amplifies the numeric range, which could  require employing quantization techniques~\cite{ChoiJungwook2018PPCA}, similar to those currently being used for large language models.

We envision two opportunities for integrating IMC crossbar arrays into our framework. The binary matrix $S_{34}$ from \Cref{fig:IMCarchi} is in charge of aggregating the output of the first crossbar array. Since the solution is updated at every iteration, the corresponding neighbourhood representation---and thus $S_{34}$---must also change at each iteration, which may introduce reprogramming overhead. Although $S_{34}$ is $n(n-1)/2$-by-$n^2$ in size, only $2n-3$ elements need to be modified per iteration. Such frequent reprogramming can incur costs in terms of both time and energy consumption. If the reprogramming is performed sequentially (i.e., row by row), it would require a time complexity of $O(n)$. However, efficient reprogramming techniques remain an active area of research. For example, faster and more-energy-efficient memory device technologies---such as the capacitive (DRAM-like) memories demonstrated in Ref.~\cite{LerouxNathan2025Aica}---may help mitigate this overhead.

A key feature of our algorithm is the ability to perform a one-shot update that provides a full-neighbourhood evaluation, requiring only an $O(n)$ update over the $O(n^2)$ total possible moves. Selecting and updating the $2n-3$ elements are non-trivial operations, and the most straightforward implementation may require guidance using a coprocessor. This would involve an analog-to-digital (ADC) readout, followed by a digital-to-analog  (DAC) conversion; these two steps would incur both energy and latency overheads, but we believe that the savings from the $O(n^2)$ to $O(n)$ reduction would more than compensate for the potential ADC--DAC overhead.

The noise and inaccuracies that occur during analog operations have been well-characterized~\cite{10281389, 8811809, Bao2022, Sebastian2020Memory}. For example, unstable conductance variation in memristive devices makes it difficult to store the exact resistance or charge values needed for calculations. The current--voltage relationship is linear in theory; however, in practice, non-idealities introduce errors. Another exemplary challenge is the current--resistance drop, which may be caused by a large current flowing through an array, causing a drop in the voltage; this can be exacerbated as crossbar arrays becomes larger. Noise arises from device- and circuit-level non-idealities, but it itself is not necessarily a limitation of analog computation. Recent studies demonstrate that inaccuracies that arise during analog operations can be systematically managed through error-correcting codes specifically designed for IMC systems, and even harnessed as randomness generators for use in stochastic heuristics~\cite{cai2020power}. For example, analog error-correcting schemes have been developed to detect and correct computational errors in crossbar arrays~\cite{Roth_2020_ECC, LerouxNathan2025Aica}, demonstrating that noise need not be a fundamental barrier but rather a challenge that can be mitigated through appropriate techniques, and can even enhance the effectiveness of local minima escape mechanisms~\cite{cai2020power, ding2025}.

\section{Symbols and Their Use}
\label{sec:SymbolExample}

In this section, we summarize the notation introduced in \Cref{sec:notation} and provide examples of its use.

\begin{enumerate}
\item Sets and Spaces

\begin{table}[ht]
\centering
\begin{tabular}{l l}
    \toprule
    \textbf{Notation} & \textbf{Meaning} \\
    \midrule
    $\mathbb{R}^{n}$ & The set of all real-valued column vectors with a length of $n$. \\
    $\mathbb{R}^{m\times n}$  & The set of all real-valued matrices with $m$ rows and $n$ columns. \\
    $\mathbb{S}^n$  & The set of all $n \times n$ symmetric matrices (i.e., matrices $X$ where $X = X^T$). \\
    $\Pi_n$  & The set of all permutations with a length of $n$. \\
    $P_n$  & The set of all $n \times n$ permutation matrices. \\
    \bottomrule
\end{tabular}
\end{table}

We provide examples of $P_n$ and $\Pi_n$ for the case of 
$n=3$.  The sets $P_3$ and $\Pi_3$ both have $6$ members, and are characterized as follows:
\[
\begin{small}
\text{\begin{normalsize}$P_3$\end{normalsize}}=\left\{
P^1 =
\begin{bmatrix}
1 & 0 & 0\\
0 & 1 & 0\\
0 & 0 & 1
\end{bmatrix},
P^2 =
\begin{bmatrix}
1 & 0 & 0\\
0 & 0 & 1\\
0 & 1 & 0
\end{bmatrix},
P^3 =
\begin{bmatrix}
0 & 1 & 0\\
1 & 0 & 0\\
0 & 0 & 1
\end{bmatrix},
P^4 =
\begin{bmatrix}
0 & 1 & 0\\
0 & 0 & 1\\
1 & 0 & 0
\end{bmatrix},
P^5 =
\begin{bmatrix}
0 & 0 & 1\\
1 & 0 & 0\\
0 & 1 & 0
\end{bmatrix},
P^6 =
\begin{bmatrix}
0 & 0 & 1\\
0 & 1 & 0\\
1 & 0 & 0
\end{bmatrix}
\right\},
\end{small}
\]
and
\[
\Pi_3 = \{p^1 = (1,2,3), \ p^2 = (1,3,2), \ p^3 = (2,1,3),  \ p^4 = (2,3,1), \ p^5 = (3,1,2), \ p^6 = (3,2,1)\}.
\]
Note that, for each $i=1,2,\ldots, 6$, $P^i$ is obtained by ordering the columns of the identity matrix $I_3$ with respect to the elements in $p^i$.

\item Matrix and Vector Elements

\begin{table}[ht]
\centering
\begin{tabular}{l l}
    \toprule
    \textbf{Notation} & \textbf{Meaning} \\
    \midrule
    $A(i,j), A_{i,j}$& The element located at the $i$-th row and $j$-th column of matrix $A$. \\
    $A(i,:)$ & The $i$-th row vector of matrix $A$. \\
    $A(:,i)$ & The $i$-th column vector of matrix $A$. \\
    $x_i$ & The $i$-th component of vector $x$. \\
    $\supp(x)$ & The support of vector $x$, i.e., the set of indices $i$ for which $x_i \ne 0$. \\
    \bottomrule
\end{tabular}
\end{table}

Let $A=\begin{bmatrix}
    1& 2 \\ 3&4\\ 5&6 \end{bmatrix}$, $i=1$, and $j=0$. Following the zero-indexing rule, 
we have $A(i,j) = 3$, $A(i,:)=\begin{pmatrix} 3&4\end{pmatrix}$,  and $A(:,i)=\begin{pmatrix} 2\\4\\6 \end{pmatrix}$.  
Given a vector $x=\begin{pmatrix}
    1 & 0 & 2 & -1 & 0 \end{pmatrix}^T$,  the support of $x$ is the set $\{0,2,3\}$.

\item Matrices and Vectors with Predefined Elements

\begin{table}[ht]
\centering
\begin{tabular}{l l}
    \toprule
    \textbf{Notation} & \textbf{Meaning} \\
    \midrule
    $I_n$  & The $n \times n$ identity matrix. \\
    $e_i$  & The standard unit vector in an arbitrary dimension, where the $i$-th component is equal to $1$. \\
    $\mathbf{1}_n$  & The vector with a length $n$, where all elements are $1$. \\
    \bottomrule
\end{tabular}
\end{table}

Let $n=3$. 
Then $I_3 = \begin{bmatrix}
1 & 0 & 0\\
0 & 1 & 0\\
0 & 0 & 1
\end{bmatrix}$, 
$\mathbf{1}_n = \begin{pmatrix} 1\\1\\1  \end{pmatrix}$. 
If $e_i \in \mathbb{R}^3$, then 
$e_0 = \begin{pmatrix} 1\\0\\0 \end{pmatrix}$ and $e_1 = \begin{pmatrix} 0\\1\\0 \end{pmatrix}$.

\item Operations

    \begin{table}[H]
    \centering
\begin{tabular}{l l l}
    \toprule
    \textbf{Notation} & \textbf{Operation} & \textbf{Meaning} \\
    \midrule
    $A^T$  & Transpose & The transpose of matrix $A$. If $A \in \mathbb{R}^{m\times n}$, then $A^T \in \mathbb{R}^{n\times m}$. \\
    $\kvec(X)$ & Vectorization & The vector formed by sequentially stacking the columns of matrix $X$. \\
    $\langle \cdot , \cdot \rangle$ & Inner product & The standard inner product between two vectors. \\
    $\otimes$ & Kronecker product & The block matrix product of two matrices. \\ &&The $(i,j)$-th block matrix of $(A\otimes B)$ is $A_{i,j}B$. \\
    $\circ$ & Hadamard product & The element-wise product of two vectors or matrices of the same dimension. \\
    \bottomrule
\end{tabular}
\end{table}

To illustrate the use of these operators, let $A=\begin{bmatrix}
    1&2 \\3&4 \end{bmatrix}$ and $B=\begin{bmatrix}
        -2 &1 \\ 0 & 3 \end{bmatrix}$.
Then, $\kvec(A)=\begin{pmatrix}     1\\3\\2\\4  \end{pmatrix}$ and $\kvec(B)=\begin{pmatrix}     -2 \\0 \\1\\3 \end{pmatrix}$.
The standard inner product  $\langle \kvec(A), \kvec(B) \rangle$ is equal to 12.
The Kronecker product $A\otimes B$ and the Hadamard product $A\circ B $ are 
\[ 
A\otimes B = \begin{bmatrix}
1 \cdot B & 2 \cdot B \\
3 \cdot B & 4 \cdot B
\end{bmatrix}= \begin{bmatrix}
-2 & 1 & -4 & 2 \\
0 & 3 & 0 & 6 \\
-6 & 3 & -8 & 4 \\
0 & 9 & 0 & 12
\end{bmatrix}, \text{ and }
A\circ B = \begin{bmatrix}
1\cdot(-2) & 2\cdot 1 \\
3\cdot0 & 4\cdot3
\end{bmatrix} = \begin{bmatrix}
-2 & 2 \\
0 & 12
\end{bmatrix}.
\]

\end{enumerate}

\newpage
\bibliographystyle{siam}
\bibliography{refs}

\end{document}